\pdfoutput=1 
\DeclareSymbolFont{AMSb}{U}{msb}{m}{n}
\documentclass[11pt,noamsfonts,a4paper]{amsart}
\usepackage[left=1in, right=1in, top=1in, bottom=1in]{geometry}

\usepackage{dynkin-diagrams}

\usepackage{mathtools}
\usepackage{braket}
\usepackage[charter]{mathdesign}
\usepackage[tracking]{microtype}
\usepackage{tabularx}
\usepackage{etoolbox}
\usepackage{subcaption}
\usepackage{graphicx}
\usepackage{caption}

\usepackage{comment}

\usepackage{amsmath,amsthm}
\newtheoremstyle{pineapple}%
  {1em}{1em}%
  {\itshape}{}%
  {\bfseries}{. ---}
  {0.5em}{}

\newtheoremstyle{durian}%
  {1em}{1em}%
  {}{}%
  {\bfseries}{. ---}
  {0.5em}{}

\makeatletter
\def\swappedhead#1#2#3{%
  \thmnumber{\@upn{\the\thm@headfont#2\@ifnotempty{#1}{.~}}}%
  \thmname{#1}%
  \thmnote{ {\the\thm@notefont(#3)}}}
\makeatother

\makeatletter
\newcommand*\rel@kern[1]{\kern#1\dimexpr\macc@kerna}
\newcommand*\widebar[1]{%
  \begingroup
  \def\mathaccent##1##2{%
    \rel@kern{0.8}%
    \overline{\rel@kern{-0.8}\macc@nucleus\rel@kern{0.2}}%
    \rel@kern{-0.2}%
  }%
  \macc@depth\@ne
  \let\math@bgroup\@empty \let\math@egroup\macc@set@skewchar
  \mathsurround\z@ \frozen@everymath{\mathgroup\macc@group\relax}%
  \macc@set@skewchar\relax
  \let\mathaccentV\macc@nested@a
  \macc@nested@a\relax111{#1}%
  \endgroup
}
\makeatother

\makeatletter
\def\@sect#1#2#3#4#5#6[#7]#8{%
  \edef\@toclevel{\ifnum#2=\@m 0\else\number#2\fi}%
  \ifnum #2>\c@secnumdepth \let\@secnumber\@empty
  \else \@xp\let\@xp\@secnumber\csname the#1\endcsname\fi
  \@tempskipa #5\relax
  \ifnum #2>\c@secnumdepth
    \let\@svsec\@empty
  \else
    \refstepcounter{#1}%
    \edef\@secnumpunct{%
      \ifdim\@tempskipa>\z@ 
        \@ifnotempty{#8}{.~}%
      \else
        \@ifempty{#8}{.}{.~}%
      \fi
    }%
    \@ifempty{#8}{%
      \ifnum #2=\tw@ \def\@secnumfont{\bfseries}\fi}{}%
    \protected@edef\@svsec{%
      \ifnum#2<\@m
        \@ifundefined{#1name}{}{%
          \ignorespaces\csname #1name\endcsname\space
        }%
      \fi
      \@seccntformat{#1}%
    }%
  \fi
  \ifdim \@tempskipa>\z@ 
    \begingroup #6\relax
    \@hangfrom{\hskip #3\relax\@svsec}{\interlinepenalty\@M #8\par}%
    \endgroup
    \ifnum#2>\@m \else \@tocwrite{#1}{#8}\fi
  \else
  \def\@svsechd{#6\hskip #3\@svsec
    \@ifnotempty{#8}{\ignorespaces#8\unskip
       \@addpunct.}%
    \ifnum#2>\@m \else \@tocwrite{#1}{#8}\fi
  }%
  \fi
  \global\@nobreaktrue
  \@xsect{#5}}
\makeatother

\makeatletter
\def\@seccntformat#1{%
  \protect\textup{\protect\@secnumfont
    \ifnum\pdfstrcmp{subsection}{#1}=0 \bfseries\fi
    \csname the#1\endcsname
    \protect\@secnumpunct
  }%
}
\makeatother

\DeclareMathOperator{\Div}{Div}
\DeclareMathOperator{\Sym}{Sym}
\DeclareMathOperator{\coker}{coker}
\DeclareMathOperator{\gr}{gr}
\DeclareMathOperator{\res}{res}
\DeclareMathOperator{\Pic}{Pic}
\DeclareMathOperator{\rolldet}{roll\, det}

\DeclareMathOperator{\rollpf}{roll\,pf}
\DeclareMathOperator{\pf}{pf}

\DeclareMathOperator{\Proj}{Proj}

\theoremstyle{pineapple}
\newtheorem{IntroTheorem}{Theorem}
\newtheorem*{IntroTheorem*}{Theorem}

\swapnumbers
\newtheorem{Theorem}[subsection]{Theorem}
\newtheorem{Lemma}[subsection]{Lemma}
\newtheorem{Proposition}[subsection]{Proposition}
\newtheorem{Corollary}[subsection]{Corollary}
\newtheorem*{Corollary*}{Corollary}

\theoremstyle{durian}

\usepackage[dvipsnames]{xcolor}
\usepackage{hyperref} 
\hypersetup{
  colorlinks = true,
  linkcolor = Fuchsia,
  urlcolor = Fuchsia,
  citecolor = ForestGreen,
  linkbordercolor = {white}}
\usepackage{tikz-cd}
\usepackage{tikz}
\usetikzlibrary{arrows, matrix, fit}

\tikzset{
  symbol/.style={
    draw=none,
    every to/.append style={
      edge node={node [sloped, allow upside down, auto=false]{$#1$}}}
  }
}

\usepackage{enumitem}
\setlist[1]{labelindent=\parindent}
\setlist[1]{labelsep=0.5em}
\setlist[enumerate,1]{label={\upshape (\roman*)}, ref={\upshape (\roman*)}}

\makeatletter
\newcommand{\leqnomode}{\tagsleft@true\let\veqno\@@leqno}
\newcommand{\reqnomode}{\tagsleft@false\let\veqno\@@eqno}
\makeatother

\usepackage[utf8]{inputenc}

\tikzset{>={Straight Barb[length=2pt,width=4pt]}, commutative diagrams/arrow style=tikz}

\makeatletter
\let\c@equation\c@subsection

\makeatother

\makeatletter
\newcommand{\preceqdot}{\mathrel{\mathpalette\pr@ceqd@t\relax}}
\newcommand{\pr@ceqd@t}[2]{%
  \begingroup
  \sbox\z@{$#1\prec$}\sbox\tw@{$#1\preceq$}%
  \dimen@=\dimexpr\ht\tw@-\ht\z@\relax
  {\preceq}%
  \mkern-5mu
  \raisebox{\dimen@}{$\m@th#1\cdot$}%
  \endgroup
}
\makeatother

\makeatletter
\newcommand*{\coloneqq}{\mathrel{\rlap{%
           \raisebox{0.3ex}{$\m@th\cdot$}}%
           \raisebox{-0.3ex}{$\m@th\cdot$}}%
           =}
\newcommand{\eqqcolon}{=%
           \mathrel{\rlap{%
           \raisebox{0.3ex}{$\m@th\cdot$}}%
           \raisebox{-0.3ex}{$\m@th\cdot$}}}
\makeatother

\newcommand{\parref}[1]{{\bf\ref{#1}}}

\newcommand{\kk}{\mathbf{k}}

\newcommand{\PP}{\mathbf{P}}

\newcommand{\Gr}{\mathbf{Gr}}
\newcommand{\SGr}{\mathbf{SGr}}
\newcommand{\OGr}{\mathbf{OGr}}
\newcommand{\sO}{\mathcal{O}}

\makeatletter
\newcommand{\smallbullet}{} 
\DeclareRobustCommand\smallbullet{%
  \mathord{\mathpalette\smallbullet@{0.75}}%
}
\newcommand{\smallbullet@}[2]{%
  \vcenter{\hbox{\scalebox{#2}{$\m@th#1\bullet$}}}%
}
\makeatother

\newcommand{\subsectiondash}[1]{\subsection{#1}\textbf{---}\;}
\newcommand{\citeSP}[1]{\cite[\href{https://stacks.math.columbia.edu/tag/#1}{#1}]{stacks-project}}

\title{Frobenius--Tschirnhausen ampleness}
\author{Raymond Cheng}
\author{Emre Alp {{\"O}zavc{\i}}}
\address{SB MATH CAG \\
  EPFL \\
  Station 8 \\
  1015 Lausanne \\
  Switzerland
}
\email{raymond.cheng@epfl.ch}
\email{emre.ozavci@epfl.ch}

\begin{document}
\begin{abstract}
We study smooth projective varieties whose Frobenius-trace kernel, also known
as the Tschirnhausen bundle associated with the Frobenius morphism, is ample.
We show that any non-constant morphism out of such a variety must be finite onto
its image, providing strong evidence that these varieties must be Fano
varieties of Picard rank \(1\). Using infinitesimal representation theory and
by constructing special Frobenius splittings, we show that generalized
Grassmannian of classical type and type \(\mathrm{G}_2\) have ample
Frobenius-trace kernel, with the exception of certain low characteristic examples
which are explained by the existence of exotic isogenies of the associated
algebraic groups.
\end{abstract}
\maketitle
\setcounter{tocdepth}{1}
\thispagestyle{empty}
\section*{Introduction}

Positivity of natural vector bundles often imposes strong restrictions on the
global geometry of a smooth projective variety \(X\), the prototypical
statement being Mori's theorem from \cite{Mori:Hartshorne} on ampleness of
the tangent bundle: \(X\) must be projective space \(\PP^n\).
Relaxing ampleness to nefness, Campana and Peternell
conjectured in \cite{CP:Nef} that \(X\), at least over the complex numbers and
up to passing to an \'etale cover, must fibre into rational homogeneous spaces
over its Albanese variety. Another series of statements of a similar flavour
pertain to \(X\) which are \emph{Tschirnhausen ample}, by which we mean
that for any finite surjective morphism \(f \colon Y \to X\) from another
smooth projective variety, the associated Tschirnhausen bundle
\(\mathcal{E}_f \coloneqq (f_*\sO_Y/\sO_X)^\vee\) is ample.  Lazarsfeld
observed in \cite{Lazarsfeld:Barth} that \(\PP^n\) has this property. Peternell
and Sommese later showed in \cite{PS:Coverings-I} that any such \(X\) must be
simply connected and has no contractions; in particular, complex Fano
examples must have Picard rank \(1\). Kim and Manivel
found in \cite{Kim:Barth, Kim:Quadrics, Manivel:AVB, KM:Coverings} that, over
the complex numbers, quadrics of dimension at most \(6\), Grassmannians, and
Lagrangian Grassmannians are Tschirnhausen ample; they conjectured the same
for any complex rational homogeneous space of Picard rank \(1\).

Call a smooth projective variety \(X\) over a field of positive characteristic
\emph{Frobenius--Tschirnhausen ample} if the Tschirnhausen bundle
\(\mathcal{E}_X \coloneqq (F_*\sO_X/\sO_X)^\vee\) associated with its absolute
Frobenius morphism \(F \colon X \to X\) is ample. As explained below, this
property is best understood as a common specialization of the conditions
described above. Few general restrictions and examples are known: In their
initial work \cite{CRP}, Carvajal-Rojas and Patakfalvi showed that such \(X\)
must be a Fano variety which neither has a generically smooth fibration nor a
divisorial contraction onto another smooth variety; moreover, they found that
\(\PP^n\) and quadrics of dimension at least \(3\)---with the exception of the
quadric threefold in characteristic \(p = 2\)!---are Frobenius--Tschirnhausen
ample. Carvajal-Rojas and the second named author studied this property for
toric varieties in \cite{CRO:FrobeniusToric}, showing that the only
example amongst toric varieties is \(\PP^n\). Finally, Mallory found
a  geometric obstruction in \cite{Mallory} which implies, for instance, that Fano
varieties containing projective spaces of low anti-canonical degree are not
Frobenius--Tschirnhausen ample.

The aim of this work is to provide new restrictions on and examples of
Frobenius--Tschirnhausen ample varieties. Our first result completes the
initial results of Carvajal-Rojas and Patakfalvi and shows that such varieties
have no nontrivial contractions:

\begin{IntroTheorem}\label{intro-no-contractions}
Let \(X\) be a smooth projective variety which is Frobenius--Tschirnhausen ample.
Then any non-constant morphism \(f \colon X \to Y\) is finite onto its image.
\end{IntroTheorem}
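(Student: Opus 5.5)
The idea is to reduce to a curve. Since ampleness passes to pullbacks along finite maps from curves, it suffices to find an integral curve \(C \subset X\) contracted by \(f\), with normalization \(\nu \colon \tilde C \to X\), such that \(\nu^*\mathcal{E}_X\) has a trivial quotient. Equivalently, \(H^0(\tilde C, \nu^*\mathcal{B}_X) \neq 0\), where \(\mathcal{B}_X \coloneqq F_*\sO_X/\sO_X = \mathcal{E}_X^\vee\). An ample bundle on a curve has no nonzero map to \(\sO_{\tilde C}\), so this contradicts ampleness. The required sections will come from functions on \(Y\) pulled back along \(f\): these are constant on fibres, which is exactly what makes them "flat" along contracted curves.

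First reductions. Replace \(Y\) by the normalization of the image and then take the Stein factorization. We may then assume \(f\) is surjective with connected fibres, \(Y\) is normal, and \(f_*\sO_X = \sO_Y\). If \(f\) is not finite, some fibre \(X_y\) has positive dimension, and we pick an integral curve \(C \subset X_y\). We split into two cases. If \(\dim Y < \dim X\), we may take \(y\) general, in particular a smooth point of \(Y\). If \(\dim Y = \dim X\), then \(f\) is birational and \(y\) lies in the exceptional locus.

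Key construction. Pulling back functions gives an \(\sO_Y\)-linear map \(F_*\sO_Y \to f_*F_*\sO_X\). It is compatible with the unit sections, so it induces \(\mathcal{B}_Y \to f_*\mathcal{B}_X\), and by adjunction a map \(\alpha \colon f^*\mathcal{B}_Y \to \mathcal{B}_X\). Restricting \(\alpha\) to \(C\) and using that \(f|_C\) is constant, the source becomes the trivial bundle \(\mathcal{B}_Y \otimes \kappa(y) \otimes \sO_{\tilde C}\). This fibre is nonzero as soon as \(\dim Y \geq 1\), because \(F_*\sO_Y\) has rank \(p^{\dim Y} > 1\) at smooth points, and at singular points its fibre is at least as large. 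So every element of \(\mathcal{B}_Y \otimes \kappa(y)\) gives a section of \(\nu^*\mathcal{B}_X\). It remains to show that \(\nu^*\alpha\) is nonzero, and this is where the real work lies. Concretely, locally \(F_*\sO_X \otimes \sO_C\) is \(\sO_X/I_C^{[p]}\) with module structure through \(p\)th powers. One needs a local function \(h\) near \(y\) such that \(f^*h\) is not in \(\sO_X^p + I_C^{[p]}\) near a general point of \(C\).

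Main obstacle. The hard step is this nonvanishing, because inseparability can intervene: \(f\) may fail to be generically smooth, or fibres may be non-reduced, so pulled-back functions could become \(p\)th powers modulo \(I_C^{[p]}\). My first attempt would work as follows. Choose \(h\) among local parameters at \(y\), or among generators of \(\mathfrak{m}_y\) in the singular birational case. Then argue by order of vanishing along \(C\): the functions \(f^*h\) cut out \(X_y\) set-theoretically, so they cannot all lie in \(\sO_X^p + I_C^{[p]}\), since otherwise all their differentials would vanish to high order along \(C\). If inseparability persists, I would factor \(f\) through a Frobenius twist, for example via the maximal purely inseparable subextension of \(K(X)/K(Y)\). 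This reduces to the case where \(K(X)/K(Y)\) is separable, where generic smoothness of fibres, or the differential argument in the birational case, gives the needed nonzero section. This recovers, and is consistent with, the fibration and contraction results of \cite{CRP}, which handle the generically smooth and divisorial cases.
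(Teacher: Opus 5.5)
\textbf{There is a genuine gap,} and it sits exactly at the step you flag as the main obstacle: with \(e = 1\) it cannot be closed in general.

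\textbf{Why \(e=1\) fails.} After your reductions, \(\nu^*\alpha \neq 0\) if and only if some \(h \in \mathfrak{m}_y\) has \(f^*h \notin I_C^{[p]}\) at the generic point of \(C\). (An element \(g^p + i\) with \(i \in I_C^{[p]}\) that lies in \(I_C\) forces \(g \in I_C\).) In other words, the scheme-theoretic fibre must not contain the Frobenius thickening \(\mathrm{V}(I_C^{[p]})\) generically along \(C\). In the birational case this can fail for every choice of \(C\). Suppose \(f\) is, near \(X_y\), the minimal resolution of an \(\mathrm{E}_8\) rational double point in characteristic \(2\). Artin's theorem gives \(f^*\mathfrak{m}_y \cdot \sO_X = \sO_X(-Z)\), where \(Z\) is the fundamental cycle, with coefficients \(2,3,4,6,5,4,3,2\). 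These are all \(\geq p\), so \(f^*\mathfrak{m}_y \subseteq I_C^{[2]}\) along every exceptional curve, and \(\nu^*\alpha = 0\).

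\textbf{Your two remedies do not address this.}
\begin{enumerate}
\item That the \(f^*h\) cut out \(X_y\) set-theoretically says nothing about the thickening \(I_C^{[p]}\). Vanishing of their differentials along \(C\) is no contradiction; it is exactly what happens here.
\item Factoring through a purely inseparable subextension is vacuous. After Stein factorization \(K(Y)\) is already algebraically closed in \(K(X)\), and in the birational case \(K(X) = K(Y)\). The obstruction is the thickness of the fibre, not inseparability of the field extension.
\end{enumerate}
In the fibration case you can actually rescue \(e=1\). Since \(K(Y)\) is algebraically closed in \(K(X)\) but not perfect, \(df \neq 0\). Pick \(x\) general with \(df_x \neq 0\) and \(C \subseteq X_{f(x)}\) through \(x\); then some parameter \(t\) has \(f^*t \notin \mathfrak{m}_x^2 \supseteq \mathfrak{m}_x^{[p]}\). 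No such choice exists over an exceptional locus, as the example above shows.

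\textbf{The missing idea: use all Frobenius powers.} By Lemma \ref{setup-all-ample}, ampleness of \(\mathcal{E}_X\) implies ampleness of every \(\mathcal{E}_{X,e}\). Your construction gives \(f^*\mathcal{B}_{Y,e} \to \mathcal{B}_{X,e}\) in the same way. Since \(I_C^{[p^e]} \subseteq I_C^{p^e}\), Krull's intersection theorem gives \(\bigcap_e I_C^{[p^e]}\sO_{X,C} = 0\). Hence any \(0 \neq h \in \mathfrak{m}_y\) satisfies \(f^*h \notin \sO_X^{p^e} + I_C^{[p^e]}\) for \(e \gg 0\). This yields a nonzero section of \(\nu^*\mathcal{B}_{X,e}\), handling the fibration and birational cases uniformly.

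This is essentially the paper's route. There, Mallory's sequence \(0 \to F^e_*(\mathcal{I}/\mathcal{I}^{[p^e]}) \to \mathcal{B}_{X,e}\rvert_Z \to \mathcal{B}_{Z,e} \to 0\) shows \(\mathrm{H}^0(X,\mathcal{I}/\mathcal{I}^{[p^e]}) = 0\) for all \(e\). So the fibre \(Z = X_y\) is G1, which contradicts the theorem on formal functions. Your pulled-back functions \(f^*h\) are precisely nonzero elements of \(\mathrm{H}^0(X,\mathcal{I}/\mathcal{I}^{[p^e]})\) for large \(e\).
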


This follows from the more technical result \parref{formal-G1} that any
connected closed subscheme \(Z \subseteq X\) satisfies Hironaka--Matsumara's G1
property: the formal functions on the completion of \(X\) along \(Z\) are just
the constants. Consequently, every semiample divisor on a
Frobenius--Tschirnhausen ample variety \(X\) is ample, and \(X\) has no
nontrivial extremal contractions. Since \(X\) is also a Fano variety, this
suggests that \(X\) ought to have Picard rank \(1\); we summarize this with the
following:

\begin{Corollary*}\label{intro-picard-rank}
If the contraction theorem holds for \(X\), then \(X\) has Picard rank \(1\).
\end{Corollary*}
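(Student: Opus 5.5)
The Corollary follows from Theorem \parref{intro-no-contractions} together with the Fano property established by Carvajal-Rojas and Patakfalvi in \cite{CRP}. Let \(X\) be Frobenius--Tschirnhausen ample. By \cite{CRP}, \(-K_X\) is ample, so \(X\) is a Fano variety.

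\emph{Polyhedrality of the cone.} Because \(-K_X\) is ample, the closed cone of curves \(\overline{NE}(X)\) lies entirely in the half-space \(\{K_X < 0\}\), apart from the origin. The cone theorem, whose validity is part of the hypothesis "the contraction theorem holds for \(X\)" and which holds in any case for smooth Fano varieties via bend-and-break in characteristic \(p\), then makes \(\overline{NE}(X)\) a rational polyhedral cone spanned by finitely many \(K_X\)-negative extremal rays \(R_1, \dots, R_k\). Since \(\overline{NE}(X)\) spans \(N_1(X)_{\mathbf{R}}\), the number of rays satisfies \(k \geq \rho(X)\).

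\emph{Contradiction when \(\rho(X) \geq 2\).} Assume \(\rho(X) \geq 2\). Then \(R_1\) is a proper face of \(\overline{NE}(X)\). The assumed contraction theorem supplies a morphism \(\mathrm{cont}_{R_1} \colon X \to Y\) to a normal projective variety with connected fibres, contracting exactly the curves whose classes lie in \(R_1\). Concretely, take a nef Cartier divisor \(L\) with \(L^\perp \cap \overline{NE}(X) = R_1\). By the basepoint-free theorem, \(mL\) is globally generated for \(m \gg 0\), and \(\mathrm{cont}_{R_1}\) is the Stein factorization of the morphism defined by \(|mL|\).

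This morphism is non-constant. If it were constant, every curve would be contracted, so \(R_1 = \overline{NE}(X)\), which forces \(\rho(X) = 1\). It also contracts some curve \(C\) with \([C] \in R_1\), so it has a positive-dimensional fibre and is not finite onto its image. This contradicts Theorem \parref{intro-no-contractions}, so \(\rho(X) = 1\).

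The same argument can be phrased through divisors. The divisor \(L\) above is semiample, so by the consequence of \parref{formal-G1} recorded before the Corollary it must be ample. But \(L \cdot C = 0\) for the contracted curve \(C\), which is a contradiction. If \(\rho(X) = 1\) the statement holds trivially, so no further case needs treating.

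\emph{Main obstacle.} The only delicate point is making precise what "contraction theorem" means in positive characteristic. It must supply the contraction morphism for a \(K_X\)-negative extremal ray, or equivalently the semiampleness of the supporting nef divisor \(L\). Once that is granted, everything reduces to Theorem \parref{intro-no-contractions} and the Fano property from \cite{CRP}.
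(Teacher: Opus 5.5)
Your argument is correct and is essentially the paper's: Fano-ness from \cite{CRP} makes \(\overline{NE}(X)\) rational polyhedral, and if \(\rho(X) \geq 2\) the assumed contraction of an extremal ray is a non-constant morphism that is not finite. Equivalently, its supporting divisor is semiample, not numerically trivial (since \(R_1 \neq \overline{NE}(X)\)), and yet not ample, which contradicts Theorem \parref{intro-no-contractions}.
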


Existence of contractions as in \cite[Theorem 3.7]{KM:Birational} is wide
open in positive characteristic, and is known only up to dimension \(3\)
by the work of Koll\'ar in \cite{Kollar:Contractions}. Contractions are also
known to exist for special classes of varieties such as rational homogeneous spaces,
or potentially more generally, those with nef tangent
bundle by \cite[Theorem 1.5]{KW:Nef}, and toric varieties; applying the
corollary to the latter shows that the only Frobenius--Tschirnhausen
ample toric variety is \(\PP^n\), giving another proof of the smooth case of
\cite[Theorem A, (a)\(\Leftrightarrow\)(b)]{CRO:FrobeniusToric}.

Which smooth Fano varieties of Picard rank \(1\) are Frobenius--Tschirnhausen
ample? Only \(\PP^n\) has this property in dimensions \(1\) and \(2\).  Mallory
shows in \cite[Theorem 1.2]{Mallory} that only smooth quadrics away from
characteristic \(p = 2\) and \(\PP^3\) have this property in dimension \(3\);
in higher dimensions, he shows that smooth complete intersections in \(\PP^n\)
of total degree \(d = n\) or \(d = n-1\) are \emph{not}
Frobenius--Tschirnhausen ample. The same is expected for any \(2 < d \leq n\),
and the following provides some evidence toward this:

\begin{IntroTheorem}\label{intro-nonample-hypersurfaces}
Let \(p < d \leq n\). There exists smooth Fano hypersurfaces of degree \(d\) in
\(\PP^n\) over a field of characteristic \(p\) which are not
Frobenius--Tschirnhausen ample.
\end{IntroTheorem}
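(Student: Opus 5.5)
The plan is to exhibit, for each \(p < d \leq n\), a smooth hypersurface \(X \subset \PP^n\) of degree \(d\) together with a curve \(C \subseteq X\) on which \(\mathcal{E}_X\) restricts to a bundle with a trivial or negative quotient. That would contradict ampleness, since a quotient of an ample bundle restricted to a curve is ample. Dually, it suffices to find a curve \(C\) and a nonzero map \(\sO_C \to (F_*\sO_X/\sO_X)|_C\), or some morphism \(g\colon C \to X\) with \(H^0(C, g^*(F_*\sO_X/\sO_X)) \neq 0\).

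The natural source of such sections is a \(p\)-th power: if \(s\) is a local function whose class is nonzero in \(F_*\sO_X/\sO_X\), the question becomes whether \(s\) can be defined along an entire curve. I would therefore take \(X\) to contain a line \(L\) such that \(X\) has very special behaviour along \(L\), for instance a Fermat-type or \(p\)-power-twisted equation. Concretely, I would try
\[
X = V\bigl(x_0^{d-p}x_1^p - x_0^p x_1^{d-p} + G(x_0,\ldots,x_n)\bigr),
\]
chosen to contain \(L = \{x_2 = \cdots = x_n = 0\}\). The hypothesis \(d > p\) is exactly what allows \(p\)-th powers of linear forms to appear, multiplied by forms of degree \(d-p\geq 1\).

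The key computation is the splitting type of \(F_*\sO_X|_L\), or at least of the quotient \((F_*\sO_X/\sO_X)|_L\). Since \(X\) is smooth along \(L\), the bundle \(F_*\sO_X\) is locally free. I would compute it near \(L\) through the normal bundle \(N_{L/X}\), whose degree is \(n+1-d-2+ \ldots\), namely \(\deg N_{L/X} = n-1-d\). Along \(L\) the bundle \(F^*F_*\sO_X\) has a filtration with graded pieces \(\Sym^{\leq}\)-type powers of \(\Omega_X\), truncated in the Frobenius sense, and \(\Omega_X|_L = \sO(-2)\oplus N^\vee_{L/X}\). Choosing \(X\) so that \(N_{L/X}\) is as unbalanced as possible, with a summand \(\sO(a)\) for \(a\geq 1\), should produce a piece \(\sO(-pa)\)-like summand in \(F_*\sO_X|_L\), and hence a degree \(\leq 0\) summand in the quotient by \(\sO_X\). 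A non-ample summand of \(\mathcal{E}_X|_L = (F_*\sO_X/\sO_X)^\vee|_L\) would then follow. Alternatively, and probably more cleanly, I would use the paper's Theorem~\ref{intro-no-contractions} philosophy: a non-ample \(\mathcal{E}_X\) is witnessed by a nonconstant \(p\)-th root of a function on a formal neighbourhood of a subvariety.

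The main obstacle is ensuring smoothness of \(X\) while keeping the needed degeneracy along \(L\). I would first check that smoothness fails only in a codimension-positive locus of the chosen family, using Bertini in the \(G\) term, and only afterwards verify the splitting computation. The second difficulty is carrying out the splitting type of \(F_*\) restricted to a line explicitly. If that proves intractable, I would fall back on the Mallory-style obstruction: a low-degree rational curve \(C\) with \(-K_X\cdot C\) small, coming here from the unbalanced line since \(-K_X\cdot L = n+1-d\). I would invoke it with the special equation forcing \(N_{L/X}\) to have a negative summand, which pushes the local obstruction through.
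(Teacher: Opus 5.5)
You have the right setting (a line \(L\) in \(X\), a nonzero section of \(\mathcal{B}_X|_L = (F_*\sO_X/\sO_X)|_L\), Mallory-type obstructions), but there is a genuine gap: that section is never actually produced, and the heuristic you give for it has the sign reversed. A positive summand \(\sO(a)\) of \(N_{L/X}\) can only feed negative pieces such as \(\sO(-pa)\) into \(\mathcal{B}_X|_L\). These are compatible with anti-ampleness, and their duals in \(\mathcal{E}_X|_L\) are ample. What is needed is a summand of nonnegative degree in the conormal bundle, i.e.\ a summand of \(N_{L/X}\) of degree \(\le 0\); in the paper, \(\mathcal{C}_{\ell/X}\cong\sO_\ell(-1)^{\oplus n-3}\oplus\sO_\ell(d-2)\).

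Even then there is a more serious obstacle you do not address. In Mallory's sequence \(0\to F_*(\mathcal{I}/\mathcal{I}^{[p]})\to\mathcal{B}_X|_\ell\to\mathcal{B}_\ell\to 0\), the \(\mathcal{I}\)-adic filtration of \(\mathcal{I}/\mathcal{I}^{[p]}\) has \(\mathcal{C}_{\ell/X}\) as its top \emph{quotient}. Its bottom sub \(\det(\mathcal{C}_{\ell/X})^{\otimes p-1}\cong\sO_\ell((p-1)(d+1-n))\) has sections only when \(d\ge n-1\). Likewise, in the filtration of \(F^*F_*\sO_X\) that you invoke, \(\Omega_X\) is the top quotient of \(F^*\mathcal{B}_X\). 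So sections of \(\sO_\ell(d-2)\) do not lift for free, and an unbalanced normal bundle alone proves nothing.

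The paper's key idea is the special equation \(x_0^{d-1}x_n+x_1^{d-1}x_{n-1}+\sum_{i\ge 2}x_i^pF_i\). Modulo \(\mathcal{I}^{[p]}\) it lets one solve for \(x_n\) (resp.\ \(x_{n-1}\)) on the chart \(x_0=1\) (resp.\ \(x_1=1\)) of \(\ell\). The transition functions of \(F_*(\mathcal{I}/\mathcal{I}^{[p]})\) in the monomial basis are then block diagonal, the filtration splits as \(\bigoplus_k F_*T^k(\mathcal{C}_{\ell/X})\), and \(F_*\sO_\ell(d-2)\) becomes a direct summand with sections. Your remark about nonconstant functions on a formal neighbourhood is the right framing: the paper does exhibit a nonzero element of \(\mathrm{H}^0(X,\mathcal{I}/\mathcal{I}^{[p]})\). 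But constructing it is exactly the missing content.

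The concrete ingredients do not close this gap.
\begin{itemize}
\item Your equation \(x_0^{d-p}x_1^p-x_0^px_1^{d-p}+G\) does not contain \(L=\{x_2=\cdots=x_n=0\}\) unless \(d=2p\) or \(G|_L\) cancels the binomial, which makes the binomial pointless. Nothing in it forces a splitting.
\item The fallback numerical obstruction only sees the rank and degree of \(\mathcal{E}_X|_L\). It needs \(-K_X\cdot L=n+1-d\le 2\) (or \(\le 3\) when \(p=2\)), i.e.\ \(d\ge n-1\) (resp.\ \(d\ge n-2\)). In that range the conclusion already holds for every smooth hypersurface, the normal bundle plays no role, and no choice of equation extends it to \(p<d\le n-2\), which is where the theorem has content.
\item For existence, the paper writes down explicit smooth members of its family: separate formulas for \(p\nmid d\), and Klein-type cyclic equations for \(p\mid d\), checked by the Jacobian criterion. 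A Bertini argument could also work, but only for a family that already has the splitting property, and smoothness along the base locus \(L\) must be checked separately.
\end{itemize}
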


This follows from \parref{formal-special-hypersurfaces} and
\parref{formal-special-hypersurfaces-exist}, where an explicit family of
hypersurfaces is shown to be not Frobenius--Tschirnhausen ample. The defining
equations of these hypersurfaces are taken to be of a special form: In
particular, they have a nontrivial \emph{profile} with respect to \(p\) in the
sense of \cite{profiles}; the simplest examples are the \emph{\(q\)-bic} or
\emph{extremal} hypersurfaces of \cite{fano-schemes, KKPSSVW}, such as the
Fermat hypersurface of degree \(q + 1\) with \(q = p^e\) for some \(e \geq 1\).

Besides complete intersections, the most accessible smooth Fano varieties
of Picard rank \(1\) are projective rational homogeneous spaces
\(X = \mathbf{G}/\mathbf{P}\) where \(\mathbf{P}\) is a reduced maximal
parabolic subgroup of a reductive algebraic group \(\mathbf{G}\). A first
observation---one which already implicitly appeared in Mehta and Ramanathan's
seminal work on Frobenius splittings in \cite[Remark (ii) on
p.40]{MR:F-Splitting}---is that \(\mathcal{E}_X\) is always globally generated,
even for \(X\) not necessarily of Picard rank \(1\): see
\parref{homogeneous-globally-generated}. Representation-theoretic
methods relate ampleness of \(\mathcal{E}_X\) to the existence of special
Frobenius splittings on \(X\) and, away from certain low characteristic
exceptions, such splittings may be explicitly constructed for Grassmannian of
classical type and of type \(\mathrm{G}_2\); note that examples such as the
quadric threefold in characteristic \(p = 2\) show that conditions on
the characteristic are necessary. Our technique is notably different from prior works
in that we need not explicitly decompose \(F_*\sO_X\). The result is:

\begin{IntroTheorem}\label{intro-homogeneous-spaces}
A projective rational homogeneous space \(X\) of Picard rank \(1\) of classical
type or type \(\mathrm{G}_2\) is Frobenius--Tschirnhausen ample if and only if
\(X\) is not the
\begin{enumerate}
\item\label{intro-homogeneous-spaces.symplectic}
symplectic Grassmannian \(\mathbf{SGr}(k,2m)\) with \(2 \leq k \leq m\) in
characteristic \(p = 2\); or the
\item\label{intro-homogeneous-spaces.orthogonal}
orthogonal Grassmannian \(\mathbf{OGr}(m-1,2m+1)\) in characteristic
\(p = 2\); or the
\item\label{intro-homogeneous-spaces.G2}
\(\mathrm{G}_2\)-Grassmannian in characteristics \(p = 2\) or \(p = 3\).
\end{enumerate}
\end{IntroTheorem}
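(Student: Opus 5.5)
We will use the representation-theoretic description of \(\mathcal{E}_X\) on \(X = \mathbf{G}/\mathbf{P}\). By \parref{homogeneous-globally-generated}, \(\mathcal{E}_X\) is globally generated. A globally generated bundle on a homogeneous space of Picard rank \(1\) is ample if and only if its restriction to a line (a minimal rational curve) has no trivial summand. Equivalently, it is ample if and only if no nonzero section of \(\mathcal{E}_X^\vee = F_*\sO_X/\sO_X\), on any line \(\ell \subseteq X\), is killed by all global sections. Since \(\mathbf{G}\) acts transitively on lines through a point, it suffices to check this on a single \(\mathbf{P}\)-stable line \(\ell \cong \PP^1\) through the base point.

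The approach has three steps.

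\textbf{Step 1: reduce to a statement about Frobenius splittings.} A trivial quotient \(\mathcal{E}_X|_\ell \to \sO_\ell\) dualizes to a map \(\sO_\ell \to (F_*\sO_X/\sO_X)|_\ell\). Combined with global generation, this says there is a nonzero ``direction'' in \(F_*\sO_X\) along \(\ell\) on which every trace-type functional \(\varphi \in \mathrm{Hom}(F_*\sO_X,\sO_X)\) with \(\varphi(1)=0\) vanishes. Dually, ampleness holds as soon as, for each such direction, there is a Frobenius splitting (or more generally a functional in \(\mathrm{Hom}(F_*\sO_X,\sO_X) \cong H^0(X,\omega_X^{1-p})\)) that separates it.

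\textbf{Step 2: pass to infinitesimal data.} Sections of \(\omega_X^{1-p}\) are controlled by the Steinberg-type splitting \(\sigma^{p-1}\), built from products of \(\mathbf{B}\)- and \(\mathbf{B}^-\)-stable divisors, together with its translates under \(\mathbf{G}\). The relevant fibre of \(F_*\sO_X/\sO_X\) at the base point is the truncated coordinate ring of the Frobenius kernel of the opposite unipotent radical \(\mathbf{U}_{\mathbf{P}}^-\), viewed as a \(\mathbf{P}\)-module, or more precisely as a module over the Frobenius kernel \(\mathbf{G}_1\). Ampleness then becomes the claim that the \(\mathbf{P}_1\)-submodule generated along the root-\(\mathrm{SL}_2\) of the line has no invariants of weight \(0\) that are annihilated by the pairing with \(H^0(\omega_X^{1-p})\).

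\textbf{Step 3: construct the splittings type by type.} For each classical type and for \(\mathrm{G}_2\), we will write explicit splittings as \(\sigma^{p-1}\) multiplied by suitable extra monomials in Plücker-type coordinates. These should realize each monomial \(x^a\), \(0 \le a < p\), in the coordinate of the root subgroup of the line, modulo the other coordinates. Doing this needs weight computations in the root system: each required weight must appear in \(H^0(\omega_X^{1-p})\) and pair nontrivially.

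For the exceptions we prove non-ampleness by exhibiting a trivial quotient. In characteristic \(2\) for types \(\mathrm{B}/\mathrm{C}\), and in characteristics \(2\) and \(3\) for \(\mathrm{G}_2\), there are exotic isogenies \(\mathbf{G} \to \mathbf{G}'\) that kill the short (or long) root groups. These induce purely inseparable morphisms \(X \to X'\) factoring Frobenius, which produce a subbundle of \(F_*\sO_X/\sO_X\) pulled back from \(X'\). We want this to give a trivial summand on lines lying in the kernel direction, as in the quadric threefold example of \cite{CRP}. The main obstacle is Step 3 in full generality. The hard part is guaranteeing that the required weights are both attained and non-annihilated exactly outside the listed characteristics. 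We would first attempt this by reducing to the rank-\(2\) Levi subgroups containing the line's root, where the computation is explicit.
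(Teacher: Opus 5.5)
Your outline has genuine gaps in both directions. For ampleness, the reduction in Step 1 is incorrect. By \parref{homogeneous-globally-generated}, \(\mathcal{E}_X\) is globally generated, so at every point of \(\ell\) every nonzero vector of \(\mathcal{B}_X = F_*\sO_X/\sO_X\) is already detected by some global \(\varphi\) with \(\varphi(1) = 0\). A ``direction killed by all trace-type functionals'' therefore never exists, and your sufficient condition would make every \(\mathcal{E}_X\) ample. That is false: the quadric threefold with \(p = 2\) (\parref{homogeneous-quadric-threefold}) has globally generated but non-ample \(\mathcal{E}_X\).

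The correct dual condition is \(\mathrm{H}^0(\ell, \mathcal{B}_X\rvert_\ell) = 0\). A trivial quotient is a section along \(\ell\) whose pairing with each global \(\varphi\) is \emph{constant} along \(\ell\), not zero. Since \(\mathcal{B}_\ell \cong \sO_\ell(-1)^{\oplus p-1}\) has no sections, the sequence of \parref{setup-subschemes} puts the obstruction in \(\mathrm{H}^0(\ell, F_*(\mathcal{I}/\mathcal{I}^{[p]}))\), i.e.\ in the directions \emph{normal} to \(\ell\).

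Step 2 replaces this global condition along \(\ell\) by an imprecise fibrewise condition at one point. Step 3 targets monomials in the coordinate \emph{along} \(\ell\), which is exactly the part that never obstructs. Moreover, Step 3 is not carried out, and that is where all the content lies. (Reducing to a single line also needs homogeneity and a Borel fixed point degeneration of contracted curves; this is true here but must be argued.)

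The idea you are missing is the twist. The paper proves that \(\mathcal{E}_X(-1)\) is globally generated, which for a homogeneous bundle is a condition at the single point \(o\); ampleness of \(\mathcal{E}_X = \mathcal{E}_X(-1) \otimes \sO_X(1)\) follows. Via Kempf's unique \(\mathbf{U}_1\)-stable hyperplane and the Demazure--Gabriel correspondence, this becomes the requirement that \(\operatorname{im}\res'\) contain every \(\partial_\alpha\tau\) (\parref{homogeneous-ampleness-criterion}). By \(\mathbf{L}\)-equivariance (\parref{homogeneous-ampleness-u}), one such element per \(\mathbf{L}\)-irreducible summand of \(\mathfrak{u}\) suffices. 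These are produced as \((p-1)\)-power rolling determinants or Pfaffians divided by a single minor. Where the relevant weight space is not one-dimensional, commutators in \(\mathfrak{u}\) absorb the extraneous terms. The characteristic hypotheses enter exactly where these \(\mathbf{L}\)-modules become reducible or the structure constants \(\pm 2\), \(\pm 3\) vanish.

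For the exceptions, the isogeny idea is right in spirit but incomplete and partly wrong. \(\mathrm{G}_2\) has no exotic isogeny in characteristic \(2\); special isogenies of \(\mathrm{G}_2\) occur only for \(p = 3\). The paper instead applies \parref{setup-numerical-obstruction}, since lines on the \(\mathrm{G}_2\)-Grassmannian have anticanonical degree \(3\).

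In types \(\mathrm{B}/\mathrm{C}\), the maps \(\mathbf{OGr}(k,2m+1) \to \mathbf{SGr}(k,2m)\) and their duals exist for every \(k\). This includes ample cases such as \(\mathbf{SGr}(1,2m) \cong \PP^{2m-1}\) and \(\mathbf{OGr}(k,2m+1)\) with \(k \leq m-2\). So a factorization of Frobenius alone proves nothing; you need a mechanism that singles out the listed cases. The paper uses three:
\begin{itemize}
\item For \(\mathbf{SGr}(k,2m)\), the cover \(\varphi\) of degree \(2^k\) with \(\varphi^*\sO(1) \cong \sO(1)\) has Tschirnhausen bundle of rank \(2^k - 1\) but degree \(2^{k-1}\) on lines. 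This bundle is a quotient of \(\mathcal{E}_X\) by \parref{setup-ample-insep-tschirnhausen}.
\item For \(\mathbf{OGr}(m-1,2m+1)\), the dual cover \(\psi\) restricts to the Frobenius of the leaf \(Z = \mathbf{OGr}(m-1,2m)\) (\parref{exceptions-orthogonal-leaves}). Hence \(\mathcal{E}_\psi\rvert_Z \cong \mathcal{E}_Z\), which is not ample since \(Z\) has Picard rank \(2\).
\item For \(\mathrm{G}_2\) with \(p = 3\), the cover \(\psi \colon Q \to X\) has Tschirnhausen bundle of rank \(26\) and degree \(18\) on lines.
\end{itemize}
Note also that the relevant subbundle of \(\mathcal{B}_X\) is \(\varphi_*\sO_Y/\sO_X\) for a cover \(\varphi \colon Y \to X\) through which Frobenius factors. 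This is a pushforward, not a pullback.
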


Ampleness follows from the stronger result \parref{homogeneous-theorem}, which
shows that the twist \(\mathcal{E}_X \otimes \sO_X(-1)\) by the
negative generator of \(\Pic X\) is globally generated. In particular, quadrics
of dimension \(\geq 4\) in characteristic \(p = 2\) are Frobenius--Tschirnhausen
ample, correcting a slight inaccuracy in \cite[Corollary 4.8]{CRP}.
Non-ampleness for the exceptions is established in
\S\parref{section-exceptions} and, in most cases, is explained by the existence
of exotic isogenies between the relevant algebraic groups. Optimistically, we
would guess that the generalized Grassmannian in the remaining exceptional
types are also Frobenius--Tschirnhausen ample, at least away from
characteristic \(p = 2\); more conservatively, we conjecture
that this is the case as soon as the prime is good, say \(p > 5\).

The quadric threefold \(Q\) in characteristic \(p = 2\) appears twice in this
list of exceptions, and non-ampleness of \(\mathcal{E}_Q\) is explained by the
existence of the finite
purely inseparable morphism
\begin{align*}
\psi \colon \PP^3 & \to Q = \{(x_0:\cdots:x_4) \in \PP^4 : x_0x_1 + x_2x_3 + x_4^2 = 0\} \\
(x_0:x_1:x_2:x_3) & \mapsto (x_0^2:x_1^2:x_2^2:x_3^2:x_0x_1+x_2x_3)
\end{align*}
albeit for different reasons: Viewing \(Q = \mathbf{SGr}(2,4)\) as in
\ref{intro-homogeneous-spaces.symplectic}, the point is that the Tschirnhausen
bundle \(\mathcal{E}_\psi\) is a quotient of \(\mathcal{E}_Q\) and has rank
\(3\) and only degree \(2\) on lines in \(Q\).  Viewing \(Q =
\mathbf{OGr}(1,5)\) as in \ref{intro-homogeneous-spaces.orthogonal},
non-ampleness is due to the fact that the restriction of \(\psi\) to the
quadric surface \(S \subset \PP^3\) defined by \(x_0x_1 + x_2x_3 = 0\) is
the Frobenius morphism onto the hyperplane section \(S \cong Q \cap \mathrm{V}(x_4)\),
so that \(\mathcal{E}_\psi\rvert_S \cong \mathcal{E}_S\), and the latter is not
ample; in more geometric language, the leaves of the foliation associated with
\(\psi\) are quadric surfaces, and this obstructs ampleness of \(\mathcal{E}_\psi\),
whence \(\mathcal{E}_Q\).

To close this introduction, we indicate as to how the Frobenius--Tschirnhausen
bundle \(\mathcal{E}_X\) may be related to the tangent bundle of \(X\). First,
Kunz's theorem from \cite{Kunz} says that the Frobenius--Tschirnhausen bundle
detects nonsingularity: \(\mathcal{E}_X\) is locally free if and only if \(X\)
is regular. Next, positive characteristic foliation theory as in \cite{Ekedahl:Foliations}
identifies the Frobenius morphism \(F \colon X \to X\) as the quotient map for
the infinitesimal action of the tangent bundle \(\mathcal{T}_X\) on \(X\)
itself; as such, \(\mathcal{E}_X\) is dual to the sheaf of coinvariants for
the action of \(\mathcal{T}_X\) on \(\sO_X\). Finally, and in relation to this,
there exists a canonical filtration of \(F^*\mathcal{E}_X\) whose graded pieces
are certain (truncated) divided powers of \(\mathcal{T}_X\): see \cite[\S3]{Sun}.
This final point, especially, indicates that positivity of \(\mathcal{T}_X\)
contributes to that of \(\mathcal{E}_X\); the spirit of this work is a conviction
that the converse should also be true to some extent.

\smallskip
\noindent\textbf{Notation. ---}
Throughout, \(\kk\) denotes an algebraically closed field of characteristic
\(p > 0\), over which all schemes appearing are taken over. A \emph{variety}
refers to an integral scheme which is separated and of finite type over
\(\kk\). For a scheme \(X\) and a finite locally free \(\sO_X\)-module
\(\mathcal{E}\), we follow geometric conventions and write
\(\PP\mathcal{E} \coloneqq \Proj\Sym\mathcal{E}^\vee\) for the projective
bundle of lines in \(\mathcal{E}\); the pushforward along
\(\pi \colon \PP\mathcal{E} \to X\) of the relatively ample tautological line
bundle is therefore \(\pi_*\sO_{\PP\mathcal{E}}(1) = \mathcal{E}^\vee\). As
such, \(\mathcal{E}\) is \emph{ample} if the tautological line
bundle of \(\PP\mathcal{E}^\vee\) is ample.

\smallskip
\noindent\textbf{Acknowledgements. ---}
We would like to thank Jefferson Baudin, Javier Carvajal-Rojas, Alapan
Mukhopadhyay, Zsolt Patakfalvi, Quentin Posva, Domenico Valloni, and Yang Zhang
for helpful comments and conversations. RC would especially like to thank Pieter
Belmans and Sam Mundy for valuable discussions about generalized Grassmannian
and groups of type \(\mathrm{G}_2\).

\section{Frobenius--Tschirnhausen ampleness}\label{section-setup}
The central object of study in this work is the Tschirnhausen bundle
\(\mathcal{E}_X\) of the absolute Frobenius morphism \(F \colon X \to X\) of a
smooth projective variety \(X\). Explicitly, and more generally for each
\(e \geq 1\), define the \emph{\(e\)-th Frobenius cokernel} to be the
\(\sO_X\)-module
\[
\mathcal{B}_{X,e} \coloneqq \coker(F^{e,\#} \colon \sO_X \to F_*^e\sO_X).
\]
Since \(X\) is smooth, Kunz's theorem \cite{Kunz} implies that
\(\mathcal{B}_{X,e}\) is locally free of rank \(p^{ne} - 1\) where
\(n \coloneqq \dim X\). Dually, the \emph{\(e\)-th Frobenius--Tschirnhausen
bundle} or \emph{Frobenius-trace kernel} is
\[
\mathcal{E}_{X,e} \coloneqq
\mathcal{B}_{X,e}^\vee \cong
\ker(\tau_e \colon F^e_*(\omega_X^{\otimes 1-p^e}) \to \sO_X)
\]
where the second identification arises from Grothendieck duality along
the finite flat morphism \(F^e \colon X \to X\); here, \(\omega_X\) is
the dualizing line bundle of \(X\), and the canonical map \(\tau_e\) is
referred to as the \emph{\(p^e\)-th Frobenius-trace morphism}. In the case
\(e = 1\), write also \(\mathcal{B}_X \coloneqq \mathcal{B}_{X,1}\)
and \(\mathcal{E}_X \coloneqq \mathcal{E}_{X,1}\).

An initial question regarding positivity of the Frobenius--Tschirnhausen
bundles is: might there be a hierarchy of positivity conditions depending on
the sequence of indices \(e \geq 0\) for which \(\mathcal{E}_{X,e}\) is
positive? Happily, there is just one possibility:

\begin{Lemma}\label{setup-all-ample}
For a smooth projective variety \(X\), the following statements
are equivalent:
\begin{enumerate}
\item\label{setup-all-ample.just-1}
\(\mathcal{E}_X\) is ample;
\item\label{setup-all-ample.some-e}
\(\mathcal{E}_{X,e}\) is ample for some \(e \geq 1\); and
\item\label{setup-all-ample.all-e}
\(\mathcal{E}_{X,e}\) is ample for all \(e \geq 1\).
\end{enumerate}
The equivalence also holds with nefness in place of ampleness.
\end{Lemma}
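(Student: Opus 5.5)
The plan is to compare the bundles \(\mathcal{E}_{X,e}\) for different \(e\) through the factorization \(F^{e+1} = F^e \circ F\), and then use two standard facts:
\begin{itemize}
\item quotients of ample (resp.\ nef) bundles are ample (resp.\ nef);
\item extensions of ample (resp.\ nef) bundles are ample (resp.\ nef).
\end{itemize}

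Consider the chain of injections \(\sO_X \to F_*\sO_X \to F^{e+1}_*\sO_X\), where the second map is \(F_*\) applied to \(F^{e,\#}\). Taking cokernels of the composite gives an exact sequence
\[
0 \to \mathcal{B}_X \to \mathcal{B}_{X,e+1} \to F_*\mathcal{B}_{X,e} \to 0 .
\]
Likewise, the chain \(\sO_X \to F^e_*\sO_X \to F^{e+1}_*\sO_X\) gives
\[
0 \to \mathcal{B}_{X,e} \to \mathcal{B}_{X,e+1} \to F^e_*\mathcal{B}_X \to 0 .
\]
All terms are locally free by Kunz's theorem. Dualizing the second sequence, and using Grothendieck duality along \(F^e\) as in the definition of \(\mathcal{E}_{X,e}\), yields
\[
0 \to F^e_*(\mathcal{E}_X \otimes \omega_X^{\otimes 1-p^e}) \to \mathcal{E}_{X,e+1} \to \mathcal{E}_{X,e} \to 0 .
\]
Dualizing the first sequence gives similarly a surjection \(\mathcal{E}_{X,e+1} \to \mathcal{E}_X\) whose kernel is \(F_*(\mathcal{E}_{X,e}\otimes\omega_X^{\otimes 1-p})\).

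\emph{The easy directions.} The implication \ref{setup-all-ample.all-e}\(\Rightarrow\)\ref{setup-all-ample.some-e} is trivial. For \ref{setup-all-ample.some-e}\(\Rightarrow\)\ref{setup-all-ample.just-1}, iterate the surjections \(\mathcal{E}_{X,e+1}\to\mathcal{E}_{X,e}\). This presents \(\mathcal{E}_X\) as a quotient of \(\mathcal{E}_{X,e}\), and ampleness or nefness passes to quotients.

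\emph{The direction \ref{setup-all-ample.just-1}\(\Rightarrow\)\ref{setup-all-ample.all-e}.} Argue by induction on \(e\) using the extension above. Since extensions of ample bundles are ample, it suffices to show that \(F^e_*(\mathcal{E}_X\otimes\omega_X^{\otimes 1-p^e})\) is ample (resp.\ nef) whenever \(\mathcal{E}_X\) is. Rewrite
\[
\omega_X^{\otimes 1-p^e} = \omega_X \otimes F^{e*}\omega_X^{-1},
\]
so that by the projection formula this kernel equals \(F^e_*(\mathcal{E}_X\otimes\omega_X)\otimes\omega_X^{-1}\).

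\emph{Main obstacle.} The key step is to control the positivity of a Frobenius pushforward. I would proceed as follows.
\begin{enumerate}
\item Since \(F^e\) is finite and surjective, ampleness (resp.\ nefness) of a bundle \(\mathcal{V}\) can be tested after pulling back along \(F^e\). So it suffices to treat \(F^{e*}F^e_*(\mathcal{E}_X\otimes\omega_X)\otimes\omega_X^{-p^e}\).
\item Filter \(F^{e*}F^e_*\mathcal{V}\) by the canonical connection filtration, in the spirit of \cite[\S3]{Sun}. Its graded pieces are \(\mathcal{V}\) tensored with pieces built from \(\Omega^1_X\), appearing in degrees up to \(n(p^e-1)\).
\item The factor \(\omega_X^{-p^e}\) then has to absorb the negativity coming from these \(\Omega\)-pieces. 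The top piece should be comparable to \(\mathcal{V}\otimes\omega_X^{p^e-1}\), whose twist is exactly compensated by \(\omega_X^{-p^e}\otimes\omega_X\). For the intermediate pieces I would need to use ampleness of \(\omega_X^{-1}\), which holds because \(X\) is Fano by \cite{CRP}.
\end{enumerate}
If this bookkeeping fails, the fallback is the curve-by-curve criterion. One would show that any quotient line bundle of the pullback of \(F^e_*(\mathcal{E}_X\otimes\omega_X^{\otimes1-p^e})\) to a smooth curve has positive (resp.\ nonnegative) degree. This would use adjunction \(F^{e*}F^e_* \to \mathrm{id}\) together with the trace description of \(\mathcal{E}_{X,e}\), reducing to the corresponding statement for \(\mathcal{E}_X\) on the Frobenius-twisted curve.
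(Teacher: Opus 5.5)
Your two exact sequences, the dualizations and the easy implications are correct. Your reduction of (i)\(\Rightarrow\)(iii) also matches the paper's: the paper uses your first sequence in the form \(0\to\mathcal{B}_X\to\mathcal{B}_{X,e}\to F_*\mathcal{B}_{X,e-1}\to 0\), and your second works equally well. The gap is the one step everything rests on. You need that pushforward along a finite surjective morphism preserves anti-ampleness (resp.\ anti-nefness). In your dual formulation, this says that \(F^e_*(\mathcal{V}\otimes\omega_X^{\otimes 1-p^e})\cong(F^e_*\mathcal{V}^\vee)^\vee\) is ample (resp.\ nef) whenever \(\mathcal{V}\) is. The paper does not derive this from the geometry of Frobenius; it imports it as \cite[Theorem 1.1]{BLNN}, which holds for arbitrary finite surjective morphisms and whose proof works in any characteristic. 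With that input your second sequence closes the induction at once, since \(F^e_*\mathcal{B}_X\) is then anti-ample. Without it, your proposal does not close.

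Neither of your routes, as written, supplies this statement.

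\emph{Filtration route.} For \(e=1\), the graded pieces of the filtration of \cite[\S3]{Sun} on \(F^*F_*(\mathcal{E}_X\otimes\omega_X)\otimes\omega_X^{-p}\) are \(\mathcal{E}_X\otimes\omega_X^{\otimes 1-p}\otimes T^\ell(\Omega^1_X)\cong\mathcal{E}_X\otimes T^{n(p-1)-\ell}(\Omega^1_X)^\vee\), using the perfect pairing into \(T^{n(p-1)}(\Omega^1_X)\cong\omega_X^{\otimes p-1}\). Apart from the two extreme pieces, these involve the tangent bundle, for instance \(\mathcal{E}_X\otimes\mathcal{T}_X\). Ampleness of \(\omega_X^{-1}\) controls only determinants, so it gives no reason for these pieces to be ample or even nef. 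Since an extension can be ample without its graded pieces being so, this filtration is too lossy to conclude anything.

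\emph{Fallback route.} This is only a sketch, and as stated it uses an insufficient criterion. Positivity of all quotient line bundles on all curves only says that the tautological line bundle on the projectivization is strictly nef. Ampleness needs a uniform bound of Barton--Kleiman type, \(\deg L\geq\epsilon\deg\nu^*H\). Moreover, the adjunction step takes place on \(C\times_{X,F^e}X\), which is non-reduced. Turning a nonzero map \(\nu'^*\mathcal{E}_X\to g^*L\) there into a degree bound needs an argument you have not given.

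If you want to prove the pushforward statement yourself, it is more natural to stay with the \(\mathcal{B}\)'s. There a sub-line bundle of \(\nu^*f_*\mathcal{B}\) corresponds under \(g^*\dashv g_*\) to a map \(g^*L\to\nu'^*\mathcal{B}\), with no dualizing twists to track. Handling the non-reduced base change and the uniformity in that setting is precisely the content of the cited theorem.
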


\begin{proof}
It is easier to work with the dual statements and to show anti-ampleness of the
\(\mathcal{B}_{X,e}\).
Clearly, \ref{setup-all-ample.all-e} \(\Rightarrow\) \ref{setup-all-ample.some-e},
and \ref{setup-all-ample.some-e} \(\Rightarrow\) \ref{setup-all-ample.just-1}
was observed in \cite[Remark 5.2]{CRP}. So it remains to show that
\ref{setup-all-ample.just-1} \(\Rightarrow\) \ref{setup-all-ample.all-e}.
Suppose that \(\mathcal{B}_X = \mathcal{B}_{X,1}\) is anti-ample. As already
observed in \emph{loc.~cit.}, for any integer \(e > 0\), factoring
\(F^e = F \circ F^{e-1}\) provides a short exact sequence
\[
0 \to \mathcal{B}_X \to \mathcal{B}_{X,e} \to F_*\mathcal{B}_{X,e-1} \to 0.
\]
By induction on \(e\), we may suppose that \(\mathcal{B}_{X,e-1}\) is anti-ample.
Since \(F \colon X \to X\) is finite and surjective, \(F_*\mathcal{B}_{X,e-1}\)
is also anti-ample by \cite[Theorem 1.1]{BLNN}---although stated in
characteristic \(0\), their proof works in any characteristic. Thus
\(\mathcal{B}_{X,e}\) is an extension of anti-ample vector bundles and so
is anti-ample itself. The proof for nefness is entirely analogous.
\end{proof}

Positivity of \(\mathcal{E}_X\) propagates to positivity of Tschirnhausen
bundles of all finite purely inseparable surjective morphisms
\(\varphi \colon Y \to X\) from smooth varieties; for brevity, we also say that
\(Y\) is a \emph{purely inseparable covering} of \(X\). The main point is
that any such morphism factors through a power of the absolute Frobenius
morphism: there is a morphism \(\psi \colon X \to Y\) and integer \(e \geq 0\)
such that the maps \(\varphi \circ \psi\) and \(\psi \circ \varphi\) are the
\(p^e\)-power Frobenius morphisms of \(X\) and \(Y\), respectively;
the minimal such \(e\) is called the \emph{height} of \(\varphi\). With this,
we have:

\begin{Proposition}\label{setup-ample-insep-tschirnhausen}
A smooth projective variety \(X\) is Frobenius--Tschirnhausen ample if
and only if \(\mathcal{E}_\varphi\) is ample for any purely inseparable
covering \(\varphi\colon Y \to X\) by a smooth projective variety.
\end{Proposition}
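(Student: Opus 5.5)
The reverse implication is immediate. The absolute Frobenius \(F \colon X \to X\) is itself a purely inseparable covering of \(X\) by a smooth projective variety, and its Tschirnhausen bundle is \(\mathcal{E}_F = (F_*\sO_X/\sO_X)^\vee = \mathcal{E}_X\). So the substance lies in the forward direction.

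For the forward direction, suppose \(\mathcal{E}_X\) is ample and let \(\varphi \colon Y \to X\) be a purely inseparable covering by a smooth projective variety, of height \(e\). Choose \(\psi \colon X \to Y\) with \(\varphi \circ \psi = F^e_X\). Dually to \(\mathcal{E}\), it suffices to show that \(\mathcal{B}_\varphi \coloneqq \varphi_*\sO_Y/\sO_X\) is anti-ample. The factorization gives inclusions of \(\sO_X\)-algebras
\[
\sO_X \subseteq \varphi_*\sO_Y \subseteq \varphi_*\psi_*\sO_X = F^e_*\sO_X,
\]
where the second inclusion is \(\varphi_*\) of \(\psi^\# \colon \sO_Y \to \psi_*\sO_X\). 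This map is injective because \(\psi\) is finite and surjective onto the integral scheme \(Y\); finiteness holds since \(\varphi \circ \psi\) is finite and \(\varphi\) is separated. Passing to quotients by \(\sO_X\), we obtain an injection \(\mathcal{B}_\varphi \hookrightarrow \mathcal{B}_{X,e}\) of locally free sheaves.

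By Lemma \parref{setup-all-ample}, \(\mathcal{B}_{X,e}\) is anti-ample. A subsheaf of an anti-ample bundle need not be anti-ample unless it is a subbundle, so the key step, and the main obstacle, is to show that the cokernel \(\varphi_*\psi_*\sO_X/\varphi_*\sO_Y = \varphi_*(\psi_*\sO_X/\sO_Y) = \varphi_*\mathcal{B}_\psi\) is locally free. Here \(\mathcal{B}_\psi\) is locally free on \(Y\) because \(\psi\) is a finite morphism between smooth varieties of the same dimension. Such a morphism is flat by miracle flatness, so \(\psi_*\sO_X\) is locally free; and \(\sO_Y \to \psi_*\sO_X\) is locally split because \(\psi\) factors Frobenius and \(Y\) is \(F\)-split locally. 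Alternatively, the cokernel of a map of vector bundles of constant rank with injective fibres is locally free, which one can check fibrewise. Pushing forward along the finite flat morphism \(\varphi\) (again flat by miracle flatness) then keeps \(\varphi_*\mathcal{B}_\psi\) locally free. Hence \(\mathcal{B}_\varphi \subseteq \mathcal{B}_{X,e}\) is a subbundle, and dually \(\mathcal{E}_{X,e} \twoheadrightarrow \mathcal{E}_\varphi\) is a surjection of vector bundles. A quotient of an ample bundle is ample, so \(\mathcal{E}_\varphi\) is ample.

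If the local-splitting argument proves awkward, I would argue more directly. Since \(Y\) is regular, Kunz's theorem makes \(F^e_{Y*}\sO_Y\) locally free over \(\sO_Y\). The inclusion \(\sO_Y \to \psi_*\sO_X\) is a direct summand locally whenever \(\psi_*\sO_X\) is free over \(\sO_Y\) with \(1\) part of a basis. This follows by Nakayama, because \(1\) is nonzero in the fibre \(\psi_*\sO_X \otimes \kappa(y)\), which is a nonzero local Artinian \(\kappa(y)\)-algebra.
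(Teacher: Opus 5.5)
Your proof is correct and is essentially the paper's argument. Both push \(0 \to \sO_Y \to \psi_*\sO_X \to \mathcal{B}_\psi \to 0\) forward along \(\varphi\) to get \(0 \to \mathcal{B}_\varphi \to \mathcal{B}_{X,e} \to \varphi_*\mathcal{B}_\psi \to 0\), then dualize and apply Lemma~\parref{setup-all-ample}. You also make explicit two points the paper leaves implicit: the converse, obtained by taking \(\varphi = F\), and the local freeness of \(\varphi_*\mathcal{B}_\psi\), which you get from miracle flatness and fibrewise injectivity of \(\sO_Y \to \psi_*\sO_X\) and which is what makes the dual map \(\mathcal{E}_{X,e} \to \mathcal{E}_\varphi\) surjective.
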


\begin{proof}
Let \(\psi \colon X \to Y\) be a morphism such that \(\varphi \circ \psi\) is the
\(p^e\)-power Frobenius morphism of \(X\), and consider the short exact sequence
\[
0 \to \sO_Y \to \psi_*\sO_X \to \mathcal{B}_\psi \to 0.
\]
Pushing this along \(\varphi\), identifying \(\varphi_*\psi_*\sO_X = F_*^e\sO_X\), and passing
to cokernels of the canonical maps \(\sO_X \to \varphi_*\sO_Y\) and \(\sO_X \to F_*^e\sO_X\)
provides a short exact sequence of \(\sO_X\)-modules:
\[
0 \to \mathcal{B}_\varphi \to \mathcal{B}_{X,e} \to \varphi_*\mathcal{B}_\psi \to 0
\]
wherein the result follows from \parref{setup-all-ample} upon taking duals.
\end{proof}

\subsectiondash{Numerical restrictions}\label{setup-positivity}
Grothendieck--Riemann--Roch shows that
\[
c_1(\mathcal{E}_X) =
-c_1(F_*\sO_X) =
\frac{1}{2}p^{n-1}(p-1) c_1(\mathcal{T}_X)
\]
so that if \(\mathcal{E}_X\) is ample, then \(X\) must be Fano; see also
\cite[Proposition 5.9]{CRP}. This implies that, in dimension \(n = 1\),
\(X = \PP^1\) is the only curve with ample Frobenius-trace kernel. In higher
dimensions, by restricting to smooth rational curves, this provides a little
more. The next result is phrased for \(X\) of dimension \(n \geq 3\) as it
provides a slightly stronger statement. When \(n = 2\), the
argument shows that if \(X\) has a curve of anti-canonical degree \(\leq
2\), then \(\mathcal{E}_X\) is not ample. Since an exceptional curve on a del
Pezzo surface has degree \(1\), if \(\mathcal{E}_X\) is ample,
then \(X\) must be minimal. In the case of \(\PP^1 \times \PP^1\), a fibre of
one of the projections has degree \(2\), and so its
Frobenius-trace kernel is also not ample. Thus the only surface with ample
\(\mathcal{E}_X\) is \(X = \PP^2\), providing a simple
proof of \cite[Corollary 5.15]{CRP}.

\begin{Lemma}\label{setup-numerical-obstruction}
Let \(X\) be a smooth projective variety of dimension \(n \geq 3\) and suppose
that \(X\) contains a smooth rational curve on which \(c_1(\mathcal{T}_X)\) has degree \(k\).  If either
\(p = 2\) and \(k \leq 3\) or \(p \geq 3\) and \(k \leq 2\), then \(\mathcal{E}_X\)
is not ample.
\end{Lemma}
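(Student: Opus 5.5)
The plan is to restrict \(\mathcal{E}_X\) to the given curve and compare its degree with its rank. Ampleness of a vector bundle is preserved under restriction to closed subvarieties. Let \(C \subseteq X\) be the smooth rational curve, \(C \cong \PP^1\), with \(\deg c_1(\mathcal{T}_X)\rvert_C = k\). If \(\mathcal{E}_X\) were ample, then \(\mathcal{E}_X\rvert_C\) would be an ample vector bundle on \(\PP^1\). By Grothendieck's splitting theorem it decomposes as \(\bigoplus_i \sO_{\PP^1}(a_i)\), and ampleness forces every \(a_i \geq 1\). Since \(\mathcal{E}_X\) is locally free of rank \(p^n - 1\) by Kunz's theorem, this would give
\[
\deg \mathcal{E}_X\rvert_C \geq \operatorname{rank} \mathcal{E}_X = p^n - 1.
\]

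Next we compute the left-hand side using the Grothendieck--Riemann--Roch formula of \parref{setup-positivity}, namely \(c_1(\mathcal{E}_X) = \tfrac{1}{2}p^{n-1}(p-1)c_1(\mathcal{T}_X)\). This is an identity of first Chern classes, hence of divisor classes up to numerical equivalence. Intersecting with \(C\) therefore gives
\[
\deg \mathcal{E}_X\rvert_C = \tfrac{1}{2}p^{n-1}(p-1)k.
\]
If there is any doubt about the formula holding only rationally, note that degrees on a curve are detected numerically, so rational equality suffices.

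It remains to check that \(\tfrac{1}{2}p^{n-1}(p-1)k < p^n - 1\) in the stated cases, which contradicts the inequality above.

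For \(p \geq 3\), it suffices to treat \(k = 2\), where the left side is \(p^n - p^{n-1}\). This is less than \(p^n - 1\) because \(p^{n-1} > 1\) whenever \(n \geq 2\).

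For \(p = 2\), it suffices to treat \(k = 3\), where the left side is \(3 \cdot 2^{n-2}\). The inequality \(3\cdot 2^{n-2} < 2^n - 1\) is equivalent to \(2^{n-2} > 1\), that is, to \(n \geq 3\). This is exactly where the dimension hypothesis enters. When \(n = 2\), the same computation only rules out \(k \leq 2\), which is consistent with the remark preceding the lemma.

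There is no real obstacle here. The only points needing care are that ampleness on \(\PP^1\) means every splitting summand has positive degree, rather than merely positive total degree, and that the GRR formula may be restricted to \(C\) without issue.
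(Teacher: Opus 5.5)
Your proposal is correct and follows the paper's proof: restrict to \(C \cong \PP^1\), use the splitting to get \(\deg \geq \operatorname{rank} = p^n - 1\), and compare with the degree \(\tfrac{1}{2}p^{n-1}(p-1)k\) coming from the Grothendieck--Riemann--Roch formula. The only cosmetic difference is how the final inequality is handled. You check the strict inequality directly at the extremal values \(k = 2\) and \(k = 3\), using that the degree is monotone in \(k\). The paper instead isolates \(k \geq 2 + 2\frac{p^{n-1}-1}{p^{n-1}(p-1)}\) and concludes by integrality.
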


\begin{proof}
Prove the contrapositive: If \(\mathcal{E}_X\) were ample, then the degree \(k\)
of \(c_1(\mathcal{T}_X)\) on an arbitrary smooth rational curve \(C \subset X\) satisfies
\(k \geq 4\) if \(p = 2\) and \(k \geq 3\) if \(p \geq 3\). In this case,
\(\mathcal{E}_X\rvert_C\) is an ample vector bundle of rank
\(r \coloneqq p^n - 1\) and degree \(d \coloneqq \frac{1}{2}p^{n-1}(p-1)k\) by
the above calculation. Since vector bundles on \(C \cong \PP^1\) split into
a sum of line bundles, this implies that \(d \geq r\), and isolating \(k\)
leads to
\[
k
\geq \frac{2(p^n - 1)}{p^{n-1}(p-1)}
= 2 + 2\frac{p^{n-1} - 1}{p^{n-1}(p-1)}
\]
The second term is positive as soon as \(n \geq 2\), so integrality
implies \(k \geq 3\); if \(p = 2\) and \(n \geq 3\), the second term is greater
than \(1\), whence \(k \geq 4\).
\end{proof}

Applying this to complete intersections in projective space provides a simple
proof to a slight refinement of \cite[Theorem 5.1]{Mallory}:

\begin{Corollary}\label{setup-numerical-obstruction-complete-intersections}
Let \(X \subset \PP^n\) be a smooth complete intersection of dimension \(\geq 3\)
and total degree \(d\). If either \(p = 2\) and \(d \geq n-2\) or \(p \geq 3\)
and \(d \geq n-1\), then \(\mathcal{E}_X\) is not ample.
\end{Corollary}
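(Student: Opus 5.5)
The plan is to apply \parref{setup-numerical-obstruction}, so we only need to exhibit a line \(L \subset X\) and compute \(\deg c_1(\mathcal{T}_X)\rvert_L\). Let \(X \subset \PP^n\) be a smooth complete intersection of multidegree \((d_1,\ldots,d_c)\), with \(\dim X = n - c \geq 3\) and \(d = \sum d_i\). Adjunction gives \(\omega_X \cong \sO_X(d - n - 1)\), so \(c_1(\mathcal{T}_X) = (n+1-d)H\), where \(H\) is the hyperplane class. On a line \(L \subset X\) this has degree \(k = n+1-d\). The hypotheses translate exactly into the hypotheses of \parref{setup-numerical-obstruction}: \(p = 2\) with \(d \geq n-2\) means \(k \leq 3\), and \(p \geq 3\) with \(d \geq n-1\) means \(k \leq 2\).

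The remaining cases are those where \(X\) is not Fano or has no line. If \(d \geq n+1\), then \(c_1(\mathcal{T}_X)\) is not ample. In that case \(\mathcal{E}_X\) cannot be ample, since by the Grothendieck--Riemann--Roch computation in \S\parref{setup-positivity} its determinant is a positive multiple of \(c_1(\mathcal{T}_X)\), and the determinant of an ample bundle is ample. So we may assume \(d \leq n\), i.e.\ \(k \geq 1\). It then suffices to show that \(X\) contains a line, which is automatically a smooth rational curve.

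I expect the existence of a line to be the main obstacle, and I would handle it by a dimension count on the Fano scheme of lines. A line in \(\PP^n\) lies on a hypersurface of degree \(d_i\) if and only if \(d_i + 1\) conditions vanish, namely the restriction of the equation to \(\sO_{\PP^1}(d_i)\). So the Fano scheme \(F(X)\) is the zero locus of a section of \(\bigoplus_i \Sym^{d_i} S^\vee\) on \(\Gr(2,n+1)\), where \(S\) is the tautological subbundle.
\begin{itemize}
\item This bundle has rank \(\sum (d_i+1) = d + c\).
\item The top Chern class of this bundle is nonzero, since it is a positive combination of Schubert classes because \(S^\vee\) is globally generated and this holds characteristic-free.
\item Hence, whenever the expected dimension \(2(n-1) - d - c\) is nonnegative, the zero locus is nonempty for every section.
\end{itemize}
The expected dimension equals \((n-c) + (n - 2 - d)\), which is \(\geq 3 + (n-2-d)\). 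This is nonnegative as soon as \(d \leq n+1\), which covers our range. So \(X\) contains a line, and \parref{setup-numerical-obstruction} shows that \(\mathcal{E}_X\) is not ample.

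If a fully rigorous nonemptiness statement is wanted, I would instead use the alternative line-through-a-point argument when \(d \leq n-1\), where \(X\) is covered by lines through a general point, together with the Chern class argument above.
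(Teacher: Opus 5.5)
Your reduction is the paper's: apply Lemma \parref{setup-numerical-obstruction} to a line, using $c_1(\mathcal{T}_X) = (n+1-d)H$, so that the hypotheses become $k \leq 3$ for $p = 2$ and $k \leq 2$ for $p \geq 3$. Handling $d \geq n+1$ separately through non-Fano-ness is more careful than the paper's one-liner. The claim ``any such $X$ has a line'' fails for large $d$: a general hypersurface of degree $\geq 2n-2$ in $\PP^n$ contains no line. In that range the conclusion really comes from $X$ not being Fano, as you say.

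The step that does not hold as justified is the nonvanishing of $c_{d+c}(\bigoplus_i \Sym^{d_i}\mathcal{S}^\vee)$. Global generation of $\mathcal{S}^\vee$ only shows this class is effective, i.e.\ a nonnegative combination of Schubert classes. It gives no nonvanishing: a trivial bundle is globally generated. Moreover, for a globally generated bundle the top Chern class is the class of the zero locus of a general section, so ``it is nonzero'' is equivalent to what you want to prove. Your line-through-a-point count only reaches $d \leq n-1$. So the case $d = n$, which is needed for every $p$, rests entirely on this step.

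The claim is true and easy to repair. Write $\sigma_1 = c_1(\mathcal{S}^\vee)$ and $\sigma_{11} = c_2(\mathcal{S}^\vee)$, with Chern roots $a, b$. Then $c_{d_i+1}(\Sym^{d_i}\mathcal{S}^\vee) = \prod_{j=0}^{d_i}(ja + (d_i-j)b)$. Grouping the factors for $j$ and $d_i - j$ gives:
\begin{itemize}
\item $d_i^2\sigma_{11}$ for the pair $j \in \{0, d_i\}$;
\item $j(d_i-j)\sigma_1^2 + (d_i-2j)^2\sigma_{11}$ for $0 < j < d_i/2$;
\item $\tfrac{d_i}{2}\sigma_1$ for a middle factor.
\end{itemize}
Hence the top Chern class is a nonzero polynomial in $\sigma_1, \sigma_{11}$ with nonnegative integer coefficients. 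Each monomial $\sigma_1^b\sigma_{11}^a$ with $b + 2a = d + c \leq 2(n-1)$ equals $\sigma_1^b$ times the class of $\Gr(2,n+1-a)$. Multiplying by $\sigma_1^{\delta}$, with $\delta \geq 0$ the expected dimension you computed, gives a positive integer combination of Pl\"ucker degrees. Thus $c_{d+c} \neq 0$, and every such $X$ contains a line. Alternatively, cite Debarre--Manivel or Koll\'ar for the existence of lines, as the paper does.
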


\begin{proof}
This follows directly from \parref{setup-numerical-obstruction} since
\(c_1(\mathcal{T}_X) = (n+1-d)H\) for \(H = c_1(\sO_X(1))\) the hyperplane class
and any such \(X\) has a line, that is, a curve of \(H\)-degree \(1\): see
\cite{DM:Fano} or \cite[V.4.10]{Kollar:Curves}.
\end{proof}

The examples of homogeneous spaces as considered in
\S\parref{section-grassmannian} show that these simple numerical considerations
cannot be improved, and that \parref{setup-numerical-obstruction} is in a sense
optimal. Finer considerations are therefore required to obtain further
obstructions to ampleness of \(\mathcal{E}_X\).

\section{Subschemes}\label{sections-subschemes}
If \(\mathcal{E}_X\) were ample, so would its restriction to any closed
subscheme \(Z \subseteq X\). This method is made effective by Mallory's
observation from \cite{Mallory} that, dually, the kernel of the restriction map
\(F_*\sO_X\rvert_Z \to F_*\sO_Z\) looks like a conormal bundle, related to the
Frobenius neighbourhoood of \(Z\) in \(X\). In this section, we refine some of
Mallory's techniques to show in \parref{formal-G1} that subschemes of
Frobenius--Tschirnhausen ample varieties support only constant formal functions;
consequently, as explained in \parref{formal-finite-morphisms}, all
non-constant morphisms out of such varieties are finite onto their image.

\subsectiondash{Restriction to subschemes}\label{setup-subschemes}
Mallory observed in \cite[Lemma 3.2]{Mallory} that the Frobenius cokernel of a
scheme \(X\) and that of a closed subscheme \(Z\) defined by the ideal sheaf
\(\mathcal{I}\) are related via a short exact sequence of \(\sO_Z\)-modules of
the form
\[
0 \to
F_*^e(\mathcal{I}/\mathcal{I}^{[p^e]}) \to
\mathcal{B}_{X,e}\rvert_Z \to
\mathcal{B}_{Z,e} \to
0
\]
where \(\mathcal{I}^{[p^e]}\) is the ideal sheaf generated by \(p^e\)-powers of
sections of \(\mathcal{I}\). This sequence arises from the Frobenius direct
image of the sequence of \(\sO_X\)-modules
\[
0 \to \mathcal{I}/\mathcal{I}^{[p^e]} \to \sO_X/\mathcal{I}^{[p^e]} \to \sO_Z \to 0
\]
indicating that \(\mathcal{B}_{X,e}\rvert_Z\) carries information about certain
infinitesimal neighbourhoods of \(Z\) in \(X\).

In the case that \(X\) is Frobenius--Tschirnhausen ample, this provides strong
restrictions on the internal geometry of \(X\), one such being best phrased in
terms of the formal completion \(\hat{X}\) along a closed subscheme
\(Z \subset X\). Recall from \cite[Definition 2.9]{HM} that \(Z\) is said to be
\emph{G1} in \(X\) if the canonical map
\(\mathrm{H}^0(X,\sO_X) \to \mathrm{H}^0(\hat{X}, \sO_{\hat{X}})\)
is an isomorphism. With this, we have:

\begin{Theorem}\label{formal-G1}
Any connected closed subscheme of positive dimension in a smooth projective
Frobenius--Tschirnhausen ample variety \(X\) is G1 in \(X\).
\end{Theorem}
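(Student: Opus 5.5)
The plan is to show that \(\mathrm{H}^0(\hat{X},\sO_{\hat{X}}) = \varprojlim_m \mathrm{H}^0(X,\sO_X/\mathcal{I}^m)\) is just \(\kk\). Here \(\mathcal{I}\) is the ideal of the given connected closed subscheme \(Z\) of positive dimension. Since the ideals \(\mathcal{I}^{[p^e]}\) and \(\mathcal{I}^m\) are cofinal, it suffices to show that \(\mathrm{H}^0(X,\sO_X/\mathcal{I}^{[p^e]}) = \kk\) for every \(e \geq 0\). We may also replace \(Z\) by \(Z_{\mathrm{red}}\), since this does not change \(\hat X\), and then pass to the scheme \(Z_{e}\) cut out by \(\mathcal{I}^{[p^e]}\).

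The key input is Mallory's sequence from \parref{setup-subschemes}. Its first term \(F^e_*(\mathcal{I}/\mathcal{I}^{[p^e]})\) is a subsheaf of \(\mathcal{B}_{X,e}\rvert_Z\). By \parref{setup-all-ample}, \(\mathcal{B}_{X,e}\) is anti-ample, so its restriction to \(Z\) is anti-ample as well. On each positive-dimensional component of \(Z\), an anti-ample bundle has no nonzero global sections, since the dual of a globally generated quotient would otherwise be a trivial quotient of an ample bundle. This gives \(\mathrm{H}^0(Z,\mathcal{B}_{X,e}\rvert_Z) = 0\). Some care is needed for components, or embedded points, of \(Z\) where a section could live. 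Since \(Z\) is reduced, connected and of positive dimension, every irreducible component has positive dimension. A section of a locally free sheaf on a reduced scheme vanishes if it vanishes on each component, so the vanishing holds on all of \(Z\).

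Taking global sections of the sequence \(0 \to F^e_*(\mathcal{I}/\mathcal{I}^{[p^e]}) \to \mathcal{B}_{X,e}\rvert_Z\) then gives \(\mathrm{H}^0(X,\mathcal{I}/\mathcal{I}^{[p^e]}) = 0\), because \(F^e\) is affine and so \(\mathrm{H}^0\) of the pushforward equals \(\mathrm{H}^0\) of the sheaf itself. Next consider the sequence \(0 \to \mathcal{I}/\mathcal{I}^{[p^e]} \to \sO_X/\mathcal{I}^{[p^e]} \to \sO_Z \to 0\). It yields an injection \(\mathrm{H}^0(\sO_X/\mathcal{I}^{[p^e]}) \hookrightarrow \mathrm{H}^0(Z,\sO_Z) = \kk\), using that \(Z\) is reduced, connected and projective over an algebraically closed field. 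The constants map onto \(\kk\), so \(\mathrm{H}^0(\sO_X/\mathcal{I}^{[p^e]}) = \kk\) for all \(e\). Passing to the limit, \(\mathrm{H}^0(\hat X,\sO_{\hat X}) = \kk = \mathrm{H}^0(X,\sO_X)\), which is the G1 property.

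The main obstacle is the vanishing \(\mathrm{H}^0(Z,\mathcal{B}_{X,e}\rvert_Z)=0\) for a possibly singular, reducible \(Z\). I would prove it by pulling back to the normalization of each component, and then to a curve through a general point of that component. Ampleness is preserved by pullback along finite maps, and on a curve an anti-ample bundle has no sections by degree considerations on its quotients. This is also exactly where the positive-dimensionality hypothesis enters.
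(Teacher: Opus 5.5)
Your argument is correct and is essentially the paper's proof: you use cofinality of \(\{\mathcal{I}^{[p^e]}\}\) with \(\{\mathcal{I}^k\}\), the injection \(F^e_*(\mathcal{I}/\mathcal{I}^{[p^e]}) \hookrightarrow \mathcal{B}_{X,e}\rvert_Z\) from Mallory's sequence, vanishing of sections of the anti-ample restriction by the curve argument of \parref{formal-antiample-bundles}, and the sequence \(0 \to \mathcal{I}/\mathcal{I}^{[p^e]} \to \sO_X/\mathcal{I}^{[p^e]} \to \sO_Z \to 0\). Replacing \(Z\) by \(Z_{\mathrm{red}}\) at the start is a worthwhile addition, since the paper's argument tacitly needs it in two places: in the section-vanishing step, because nilpotent sections cannot be detected on curves, and in the identification \(\mathrm{H}^0(Z,\sO_Z) \cong \kk\).
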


\begin{proof}
Let \(Z \subset X\) be a closed connected subscheme and let \(\hat{X}\) be the
completion of \(X\) along \(Z\).  Writing \(\mathcal{I}\) for its ideal sheaf,
note that the two sequences of ideal sheaves \(\{\mathcal{I}^k\}_{k \geq 0}\)
and \(\{\mathcal{I}^{[p^e]}\}_{e \geq 0}\) are mutually cofinal, so
\[
\mathrm{H}^0(\hat{X}, \sO_{\hat{X}}) =
\lim\nolimits_k \mathrm{H}^0(X, \sO_X/\mathcal{I}^k) =
\lim\nolimits_e \mathrm{H}^0(X, \sO_X/\mathcal{I}^{[p^e]}).
\]
Thus it suffices to show that
\(\mathrm{H}^0(X,\sO_X/\mathcal{I}^{[p^e]}) \cong \kk\) for all \(e \geq 0\).
Restricting the \(e\)-th Frobenius cokernel \(\mathcal{B}_{X,e}\) to \(Z\)
and using \parref{setup-subschemes} provides a short exact sequence
\[
0 \to
F_*^e(\mathcal{I}/\mathcal{I}^{[p^e]}) \to
\mathcal{B}_{X,e}\rvert_Z \to
\mathcal{B}_{Z,e} \to
0.
\]
Ampleness of the Frobenius-trace kernel implies via \parref{setup-all-ample}
that \(\mathcal{B}_{X,e}\rvert_Z\) is anti-ample, whence has no sections by
\parref{formal-antiample-bundles} below. Since Frobenius and any closed immersion
is affine,
\[
\mathrm{H}^0(X, \mathcal{I}/\mathcal{I}^{[p^e]}) =
\mathrm{H}^0(Z, F_*^e(\mathcal{I}/\mathcal{I}^{[p^e]})) \subseteq
\mathrm{H}^0(Z, \mathcal{B}_{X,e}\rvert_Z) = 0.
\]
So for each \(e \geq 0\), cohomology of the
sequence
\[
0 \to
\mathcal{I}/\mathcal{I}^{[p^e]} \to
\sO_X/\mathcal{I}^{[p^e]} \to
\sO_Z \to
0
\]
together with connectedness and projectivity of \(Z\) gives
\(\mathrm{H}^0(X,\sO_X/\mathcal{I}^{[p^e]}) \cong \mathrm{H}^0(Z,\sO_Z) \cong \kk\).
\end{proof}

In the proof, we used the fact that the dual of an ample vector
bundle on a positive-dimensional projective scheme has no sections.  For lack
of suitable reference, we include a proof:

\begin{Lemma}\label{formal-antiample-bundles}
Let \(X\) be a projective scheme over \(\kk\) and \(\mathcal{E}\) a finite
locally free \(\sO_X\)-module. If \(\dim X > 0\) and \(\mathcal{E}\) is ample,
then \(\Gamma(X,\mathcal{E}^\vee) = 0\).
\end{Lemma}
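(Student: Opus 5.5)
The plan is to reduce to a curve, and then to ampleness on a curve, where a nonzero section of \(\mathcal{E}^\vee\) yields a quotient of \(\mathcal{E}\) of non-positive degree.

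First, suppose \(s \in \Gamma(X,\mathcal{E}^\vee)\) is nonzero, viewed as a map \(s \colon \mathcal{E} \to \sO_X\). Choose a point \(x \in X\) with \(s(x) \neq 0\) in the fibre \(\mathcal{E}^\vee \otimes \kappa(x)\). Such a point exists, since otherwise \(s\) vanishes at every point, so \(s\) is nilpotent-valued locally. To handle a non-reduced \(X\), I would first pass to the reduction, noting that \(\mathcal{E}\rvert_{X_{\mathrm{red}}}\) is still ample. The issue is whether \(s\) restricts to zero there. A cleaner route is to pick an associated point \(\eta\) of \(\sO_X\) at which the germ of \(s\) is nonzero and use a local composition series argument, but I expect it is simpler to argue directly with the definition, as follows.

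Ampleness means that for any coherent \(\mathcal{F}\), \(\mathcal{F} \otimes \Sym^m\mathcal{E}\) is globally generated for \(m \gg 0\). Take \(\mathcal{F} = \mathcal{L}^{-1}\) for an ample line bundle \(\mathcal{L}\). Then \(\Sym^m\mathcal{E}\otimes\mathcal{L}^{-1}\) is globally generated for large \(m\).

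Now use \(s\). It induces \(s^m \colon \Sym^m \mathcal{E} \to \sO_X\), namely the \(m\)-th symmetric power of the map \(s\). Twisting gives \(\Sym^m\mathcal{E}\otimes\mathcal{L}^{-1} \to \mathcal{L}^{-1}\).

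The image of this map is generated by the images of global sections, so it lies in the subsheaf of \(\mathcal{L}^{-1}\) generated by global sections of \(\mathcal{L}^{-1}\). Those sections are very restricted because \(\dim X>0\). Indeed, on each positive-dimensional component, \(\Gamma(\mathcal{L}^{-1})\) is killed by high powers of nilpotents after multiplying with sections of \(\mathcal{L}^N\). The main obstacle is making this rigorous in the non-reduced setting.

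I would therefore reduce to an integral curve. Choose a point where the germ of \(s\) is not nilpotent in the sense that the image ideal \(s(\mathcal{E}) \subseteq \sO_X\) is not contained in the nilradical. If the image is contained in the nilradical, then \(s^m = 0\) for \(m\) large, because \(X\) is noetherian. Then \(\Sym^m\) does not directly help; instead, filter \(\sO_X\) by powers of the nilradical and find a nonzero induced section of \(\mathcal{E}^\vee\otimes \mathcal{N}^i/\mathcal{N}^{i+1}\), a coherent sheaf on \(X_{\mathrm{red}}\).

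So the general statement I would prove is the following: for an ample \(\mathcal{E}\) and a torsion-free-type coherent sheaf on an integral positive-dimensional component, \(\mathrm{Hom}(\mathcal{E},\mathcal{G})\) vanishes after suitable reduction. For integral \(X\), pass to an integral curve \(C \subseteq X\) through a point where \(s\) is nonzero, and normalize \(\nu \colon \tilde C \to C\). Then \(\nu^*\mathcal{E}\) is ample on a smooth curve, and \(\nu^*s \colon \nu^*\mathcal{E} \to \sO_{\tilde C}\) is nonzero. Its image is a subsheaf \(\sO(-D)\) with \(D \ge 0\), which is a line bundle quotient of \(\nu^*\mathcal{E}\) of degree \(\le 0\). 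This contradicts ampleness, since every quotient line bundle of an ample bundle on a curve has positive degree.
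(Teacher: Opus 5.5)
Your final paragraph (restrict to an integral curve through a point where \(s\) is nonzero, normalize, and observe that the image of \(\nu^*s\) is some \(\sO_{\tilde C}(-D)\) with \(D \geq 0\), a quotient line bundle of the ample \(\nu^*\mathcal{E}\) of degree \(\leq 0\)) is exactly the paper's argument. It is a complete proof whenever \(s(x) \neq 0\) in \(\mathcal{E}^\vee \otimes \kappa(x)\) for some \(x\) on a positive-dimensional irreducible component, e.g.\ for \(X\) reduced with no zero-dimensional components. The gap is in what you propose for the remaining cases.

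\begin{itemize}
\item The \(\Sym^m\) paragraph only shows that the image of \(\Sym^m\mathcal{E}\otimes\mathcal{L}^{-1} \to \mathcal{L}^{-1}\) is generated by global sections of \(\mathcal{L}^{-1}\). This forces \(s = 0\) only when \(\Gamma(X,\mathcal{L}^{-1}) = 0\) and \(X\) is reduced, since \(\Sym^m s = 0\) only says that \(s\) takes nilpotent values. That is again just the reduced case.
\item The nilradical reduction rests on the claim that \(\mathrm{Hom}(\mathcal{E},\mathcal{G})\) vanishes for a torsion-free \(\mathcal{G}\) on a positive-dimensional integral scheme. This is false: \(\mathcal{G}\) can itself be positive. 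The curve argument uses essentially that the target is \(\sO\), so that the image has degree \(\leq 0\).
\item If \(s\) is nonzero only at isolated points of \(X\), there is no curve to restrict to.
\end{itemize}

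No argument can fill this gap, because the statement is false without further hypotheses. For \(X = \PP^1 \sqcup \operatorname{Spec}\kk\) and \(\mathcal{E} = \sO_X(1)\), \(\mathcal{E}^\vee\) has a nonzero section supported on the isolated point.

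For a connected Cohen--Macaulay example, let \(S\) be the blow-up of \(\PP^2\) at a point with exceptional curve \(E\). Its conormal bundle is \(\sO_E(1)\). Let \(X = 2E\) be cut out by \(\sO_S(-2E)\), so that \(0 \to \sO_E(1) \to \sO_X \to \sO_E \to 0\). The ample line bundle \(\mathcal{L} = \sO_S(2H-E)\) has degree \(1\) on \(E\). Twisting by \(\mathcal{L}^{-1}\) gives \(0 \to \sO_E \to \mathcal{L}^{-1}\rvert_X \to \sO_E(-1) \to 0\), hence \(\Gamma(X,(\mathcal{L}\rvert_X)^\vee) \cong \kk\), although \(\mathcal{L}\rvert_X\) is ample. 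Here the nilradical \(\sO_E(1)\) is precisely a positive \(\mathcal{G}\) as above.

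The paper's own proof has the same blind spot. It simply asserts that one can find \(\nu \colon C \to X\) with \(\nu^*\sigma \neq 0\), and this fails exactly when \(\sigma\) is nilpotent or supported on zero-dimensional components. The lemma should assume \(X\) reduced with all irreducible components of positive dimension. This suffices for \parref{formal-G1}: there one may replace \(Z\) by \(Z_{\mathrm{red}}\), since the completion depends only on the support, and the final step \(\mathrm{H}^0(Z,\sO_Z) \cong \kk\) needs reducedness anyway. Under that hypothesis your last paragraph alone is the proof, and the nilradical discussion should be dropped.
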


\begin{proof}
Suppose \(\sigma \colon \mathcal{E} \to \sO_X\) is the morphism dual to a
nonzero section of \(\mathcal{E}^\vee\). Since \(X\) is of positive dimension,
we may choose a finite morphism \(\nu \colon C \to X\) from a smooth projective
curve such that \(\nu^*\sigma \neq 0\): for instance, take \(C\) to be the
normalization of a component of a general complete intersection curve in \(X\).
The image of \(\nu^*\sigma \colon \nu^*\mathcal{E} \to \sO_C\) must then be a
line subbundle of \(\sO_C\), whence of degree \(\leq 0\).  This implies
\(\nu^*\mathcal{E}\) is not ample, and so neither is \(\mathcal{E}\) since
\(\nu\) is finite.
\end{proof}

A well-known consequence of \parref{formal-G1} is that any nontrivial morphism
out of \(X\) must be finite:

\begin{Corollary}\label{formal-finite-morphisms}
If \(X\) is a smooth projective variety which is Frobenius--Tschirnhausen
ample, then any non-constant \(f \colon X \to Y\) is finite onto its image.
\end{Corollary}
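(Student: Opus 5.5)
We reduce to the case where \(f\) has connected fibres and then use \parref{formal-G1}. Replace \(Y\) by the scheme-theoretic image of \(f\), so that \(f\) is surjective onto a projective variety. Since \(X\) is projective, \(f\) is proper, and we take its Stein factorization
\[
X \xrightarrow{g} Y' \coloneqq \mathbf{Spec}_Y f_*\sO_X \xrightarrow{h} Y.
\]
Here \(h\) is finite, \(g_*\sO_X = \sO_{Y'}\), and \(g\) has geometrically connected fibres. A composite of finite morphisms is finite, so it suffices to show that \(g\) is finite. Because \(g\) is proper, this amounts to showing that \(g\) is quasi-finite, that is, that no fibre of \(g\) has positive dimension. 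Since \(f\) is non-constant, \(\dim Y' = \dim Y > 0\).

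Suppose for contradiction that some fibre \(Z \coloneqq g^{-1}(y)\) has positive dimension. Give \(Z\) its reduced structure; it is connected by Zariski's connectedness theorem, being a fibre of a morphism with \(g_*\sO_X = \sO_{Y'}\). By \parref{formal-G1}, \(Z\) is G1 in \(X\), so \(\mathrm{H}^0(\hat{X}, \sO_{\hat{X}}) = \kk\), where \(\hat{X}\) is the completion along \(Z\).

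Now compare with the completion of \(Y'\) at \(y\). Let \(\mathfrak{m}\) be the maximal ideal of \(\sO_{Y',y}\). The ideal \(\mathcal{I}\) of \(Z\) and the ideal \(\mathfrak{m}\sO_X\) have the same radical along \(Z\), so their powers are mutually cofinal. The theorem on formal functions then gives
\[
\mathrm{H}^0(\hat{X}, \sO_{\hat{X}})
= \lim\nolimits_k \mathrm{H}^0(X, \sO_X/\mathfrak{m}^k\sO_X)
\cong \lim\nolimits_k (g_*\sO_X)_y/\mathfrak{m}^k
= \hat{\sO}_{Y',y}.
\]
Since \(\dim Y' > 0\) and \(Y'\) is an integral variety of finite type, \(\hat{\sO}_{Y',y}\) has positive Krull dimension, so it is not \(\kk\). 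This contradicts the G1 conclusion, so every fibre of \(g\) is finite and \(f = h \circ g\) is finite onto its image.

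The step that needs the most care is the identification of the formal completion along \(Z\) with the completion along \(g^{-1}(y)\) computed via \(\mathfrak{m}\). One must check that the G1 condition is insensitive to the scheme structure on \(Z\), which holds because the defining ideals of different structures on the same support are cofinal. One must also allow the theorem on formal functions to be applied with an ideal of \(\sO_X\) that is only cofinal with \(\mathcal{I}\). Both points are standard, so the argument should go through as described.
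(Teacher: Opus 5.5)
Your proof is correct and follows the paper's argument. You pass to the Stein factorization to get a connected positive-dimensional fibre, then use the theorem on formal functions to identify its formal functions with the completed local ring of the positive-dimensional base, which contradicts the G1 property from \parref{formal-G1}. One small inaccuracy: the image of \(f\) is only guaranteed to be proper, not projective, but your argument never uses projectivity of the image.
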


\begin{proof}
Let \(f \colon X \to Y\) be a non-constant morphism where the fibre
\(X_y \coloneqq f^{-1}(y)\) over some point \(y \in Y\) is
positive-dimensional; by replacing \(f\) by its Stein factorization, we may
assume that \(X_y\) is connected. Writing \(\hat{X}\) for the completion of
\(X\) along \(X_y\), the theorem on formal functions, as in \citeSP{02OD} for
instance, implies
\[
\mathrm{H}^0(\hat{X}, \sO_{\hat{X}}) \cong (f_*\sO_X)_y^{\wedge}.
\]
Since \(f\) is non-constant, \(Y\) has positive dimension, and so the completion
on the right contains a subring of the form \(\kk[\![t]\!]\); in particular,
it has infinite dimension as a \(\kk\)-vector space. Thus \(X_y\) is not
G1 in \(X\), so the Frobenius-trace kernel of \(X\) cannot be ample by
\parref{formal-G1}.
\end{proof}

Together with Mori's theory of extremal contractions, this strongly suggests
that Frobenius--Tschirnhausen ample varieties
have Picard rank \(1\). Since contractions exist for smooth Fano threefolds by
\cite{Kollar:Contractions}, this gives a direct proof of
\cite[Theorem 5.19]{CRP}: a Frobenius--Tschirnhausen ample threefold
is a Fano variety of Picard rank \(1\). Numerical considerations as in
\parref{setup-numerical-obstruction} together with the fact that Fano threefolds
of Picard rank \(1\) of index \(1\) or \(2\) contain anti-canonical conics
implies that the only Frobenius--Tschirnhausen ample threefolds are \(\PP^3\)
and, when \(p \neq 2\), a smooth quadric threefold:
see \cite[Theorem 4.1]{Mallory} for details.

\subsectiondash{Filtration}\label{setup-filtration}
A more computable obstruction to Frobenius--Tschirnhausen
ampleness can be extracted from \parref{setup-subschemes} by considering the
\(\mathcal{I}\)-adic filtration of the \(\sO_X\)-module
\(\mathcal{I}/\mathcal{I}^{[p]}\):
\[
\mathcal{I}^k \cdot \mathcal{I}/\mathcal{I}^{[p]} =
\mathcal{I}^{k+1}/\mathcal{I}^{k+1} \cap \mathcal{I}^{[p]}
\;\;\text{for each \(k \geq 0\)}.
\]
Since \(\mathcal{I}^k \subseteq \mathcal{I}^{[p]}\) for \(k \gg 0\), this
provides a separated exhaustive filtration of
\(\mathcal{I}/\mathcal{I}^{[p]}\). If \(Z\) is a local complete
intersection subscheme of \(X\) of codimension \(c\), \cite[Proposition
3.4]{Mallory} provides a neat description of the graded pieces of this
filtration in terms of the conormal bundle \(\mathcal{C}_{Z/X} \coloneqq
\mathcal{I}/\mathcal{I}^2\):
\begin{align*}
\gr^k(\mathcal{I}/\mathcal{I}^{[p]})
& \coloneqq (\mathcal{I}^k \cdot \mathcal{I}/\mathcal{I}^{[p]})/(\mathcal{I}^{k+1} \cdot \mathcal{I}/\mathcal{I}^{[p]}) \\
& \cong \coker(\Sym^{k+1-p}\mathcal{C}_{Z/X} \otimes F^*\mathcal{C}_{Z/X} \to \Sym^{k+1}\mathcal{C}_{Z/X}) \eqqcolon
T^{k+1}(\mathcal{C}_{Z/X})
\end{align*}
where \(T^{k+1}\) is sometimes called the \emph{\((k+1)\)-st truncated
symmetric power functor}; here, the displayed map is multiplication and
negative symmetric powers are taken to zero.
Notably, the last nonzero piece of the \(\mathcal{I}\)-adic filtration provides
a submodule of \(\mathcal{I}/\mathcal{I}^{[p]}\) of the form
\[
\det(\mathcal{C}_{Z/X})^{\otimes p-1} \cong
T^{c(p-1)}(\mathcal{C}_{Z/X}) \cong
\gr^{c(p-1)-1}(\mathcal{I}/\mathcal{I}^{[p]}) =
\mathcal{I}^{c(p-1)}/\mathcal{I}^{c(p-1)} \cap \mathcal{I}^{[p]} \subseteq \mathcal{I}/\mathcal{I}^{[p]}.
\]
Combined with the exact sequence from \parref{setup-subschemes} and
\parref{formal-antiample-bundles}, this gives Mallory's criterion \cite[Theorem
1.1]{Mallory}: if there exists a local complete intersection subscheme
\(Z \subset X\) such that \(\det(\mathcal{C}_{Z/X})^{\otimes p-1}\) has a
section, then \(X\) is not Frobenius--Tschirnhausen ample. This may be used to
show that, for example, cubic fourfolds containing planes are not
Frobenius--Tschirnhausen ample.

Other steps of this filtration may be effectively used to obstruct
Frobenius--Tschirnhausen ampleness when the filtration splits. The following
identifies a special class of Fano hypersurfaces in projective space for
which this method may be applied:

\begin{Proposition}\label{formal-special-hypersurfaces}
A smooth Fano hypersurface \(X \subset \PP^n\) of degree \(d > p\)
projectively equivalent to
\[
(x_0^{d-1} x_n + x_1^{d-1} x_{n-1} + x_2^p F_2 + \cdots + x_n^p F_n = 0) \subset \PP^n
\]
for some \(F_2,\ldots,F_n \in \mathrm{H}^0(\PP^n, \sO_{\PP^n}(d-p))\)
is not Frobenius--Tschirnhausen ample.
\end{Proposition}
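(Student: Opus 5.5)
The plan is to produce an explicit positive-dimensional subscheme \(Z \subset X\) for which \(\mathrm{H}^0(X, \mathcal{I}/\mathcal{I}^{[p]}) \neq 0\), and then run the argument of \parref{formal-G1} backwards. If \(\mathcal{E}_X\) were ample, \parref{setup-all-ample} would make \(\mathcal{B}_{X,1}\rvert_Z\) anti-ample. The sequence of \parref{setup-subschemes} would then embed \(F_*(\mathcal{I}/\mathcal{I}^{[p]})\) into it, and \parref{formal-antiample-bundles} would force \(\mathrm{H}^0(X,\mathcal{I}/\mathcal{I}^{[p]}) = 0\). So a single nonzero section of \(\mathcal{I}/\mathcal{I}^{[p]}\) gives the contradiction.

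\emph{Choice of subscheme.} Work in the given coordinates, and take \(Z = L \coloneqq \mathrm{V}(x_2,\ldots,x_n)\), the line with coordinates \(x_0, x_1\). Its ideal in \(\PP^n\) is \(J = (x_2,\ldots,x_n)\). Every term of the equation \(f\) lies in \(J\), so \(L \subset X\), and the ideal of \(L\) in \(X\) is \(\mathcal{I} = J/(f)\). The key structural point is that
\[
f \equiv x_0^{d-1}x_n + x_1^{d-1}x_{n-1} \pmod{J^{[p]}},
\]
since \(x_i^pF_i \in J^{[p]}\) for \(i \geq 2\). Thus in \(\sO_X/\mathcal{I}^{[p]}\) we get the relation \(x_0^{d-1}x_n \equiv -x_1^{d-1}x_{n-1}\).

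\emph{Constructing the section.} Now \(\mathcal{I}/\mathcal{I}^{[p]}\) is supported on \(L\), and \(L\) is covered by \(D_+(x_0)\) and \(D_+(x_1)\). I define \(\sigma\) by gluing two local sections of degree \(0\):
\[
\sigma = x_n/x_1 \;\text{on } D_+(x_1), \qquad \sigma = -x_1^{d-2}x_{n-1}/x_0^{d-1} \;\text{on } D_+(x_0).
\]
On the overlap, the two expressions differ by \(x_0^{-(d-1)}x_1^{-1}\bigl(x_0^{d-1}x_n + x_1^{d-1}x_{n-1}\bigr)\), which is \(\equiv f \cdot(\text{unit}) \equiv 0\) modulo \(\mathcal{I}^{[p]}\). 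So \(\sigma\) is a global section of \(\mathcal{I}/\mathcal{I}^{[p]}\).

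This is the lift to \(\mathcal{I}/\mathcal{I}^{[p]}\) of a section of the summand \(\coker(\sO_L(-d) \to \sO_L(-1)^{\oplus 2}) \cong \sO_L(d-2)\) of the conormal bundle \(\mathcal{C}_{L/X}\). The positivity \(d-2 > 0\) is what makes such sections exist. The point of the special form of \(f\) is that the usual obstruction to lifting from \(\gr^0 = \mathcal{C}_{L/X}\) through the filtration of \parref{setup-filtration} disappears, because all higher-order terms of \(f\) already lie in \(J^{[p]}\).

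\emph{Nonvanishing: the main obstacle.} It remains to check that \(\sigma \neq 0\), that is, that \(x_n \notin (J^{[p]}, f)\) locally on \(D_+(x_1)\). I would argue as follows. Modulo \(J^{[p]}\), \(f\) is linear in \(x_{n-1}, x_n\) with unit coefficient \(x_1^{d-1}\) on \(x_{n-1}\). Hence \(\sO_{\PP^n}/(J^{[p]},f)\) on \(D_+(x_1)\) is obtained from \(\kk[x_0,x_2,\ldots,x_n]/(x_2^p,\ldots,x_n^p)\) by eliminating \(x_{n-1}\); strictly, one eliminates it against the relation \(x_{n-1}^p = 0\), which must be checked to remain consistent. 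The residual ring is then a truncated polynomial ring in which \(x_n\) survives, because \(p \geq 2\). This bookkeeping is the step I expect to need the most care. The hypothesis \(d > p\) (together with smoothness and the Fano condition, which only ensure we are in the stated setting) should be what guarantees that the substitution introduces no \(p\)-th power relation killing \(x_n\); I would verify this directly from the explicit form. Once \(\sigma \neq 0\), the contradiction with \parref{formal-antiample-bundles} shows that \(\mathcal{E}_X\) is not ample.
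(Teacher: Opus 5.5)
Your proof is correct. It uses the same line \(\ell = \mathrm{V}(x_2,\ldots,x_n)\) and the same mechanism as the paper: a nonzero section of \(F_*(\mathcal{I}/\mathcal{I}^{[p]})\) contradicts anti-ampleness of \(\mathcal{B}_X\rvert_\ell\) via \parref{setup-subschemes} and \parref{formal-antiample-bundles}. It obtains that section differently, though.

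The paper proves that the entire \(\mathcal{I}\)-adic filtration of \parref{setup-filtration} splits, \(F_*(\mathcal{I}/\mathcal{I}^{[p]}) \cong \bigoplus_k F_*T^k(\mathcal{C}_{\ell/X})\). It does this by checking that the transition functions in a monomial basis are block diagonal, and then takes a section from the summand \(T^1 = \mathcal{C}_{\ell/X} \supset \sO_\ell(d-2)\). You only need that one section of the top graded piece \(\mathcal{C}_{\ell/X}\) lifts, and you produce the lift by a single \v{C}ech gluing. The input is the same one the paper exploits: \(x_0^{d-1}x_n + x_1^{d-1}x_{n-1} \in \mathcal{I}^{[p]}\) on \(X\). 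Your route is shorter and avoids the bookkeeping of transition functions for the Frobenius-twisted module structure. The paper's route yields the full structure of \(F_*(\mathcal{I}/\mathcal{I}^{[p]})\), and hence many more sections, which is more than the proposition requires.

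The step you flag as the main obstacle is actually immediate, and the hypothesis \(d > p\) plays no role in it. Since \(\mathcal{I}^{[p]} \subseteq \mathcal{I}^2\), there is a surjection \(\mathcal{I}/\mathcal{I}^{[p]} \to \mathcal{I}/\mathcal{I}^2 = \mathcal{C}_{\ell/X}\). Under it, \(\sigma\) maps to the section of the summand \(\sO_\ell(d-2)\) you already identified, and that section is nonzero. Indeed, on \(x_1 \neq 0\) it is the class of \(x_n/x_1\), and the classes of \(x_2,\ldots,x_{n-2},x_n\) form a local basis of \(\mathcal{C}_{\ell/X}\) there. Hence \(\sigma \neq 0\).

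Equivalently, in your elimination on the chart \(x_1 = 1\), the relation \(x_{n-1}^p = 0\) becomes \(x_0^{p(d-1)}x_n^p = 0\), which already lies in \((x_n^p)\). So the quotient is \(\kk[x_0,x_2,\ldots,x_{n-2},x_n]/(x_2^p,\ldots,x_{n-2}^p,x_n^p)\) for every \(d\), and \(x_n\) survives. The hypothesis \(d > p\) is there only to make sense of the \(F_i\) and, in practice, to allow smooth examples; it is not what keeps \(x_n\) alive.
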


\begin{proof}
Change coordinates and assume that \(X\) is defined by the displayed equation.
Then it contains the line \(\ell\) whose ideal sheaf \(\mathcal{I}\) is
generated by the coordinates \(x_2,\ldots,x_n\). We show \(X\) is not
Frobenius--Tschirnhausen ample by showing that
\(\mathrm{H}^0(X,F_*(\mathcal{I}/\mathcal{I}^{[p]})) \neq 0\), at which point
the exact sequence of \parref{setup-subschemes} together with
\parref{formal-antiample-bundles} yields the conclusion. The observation is that,
due to the form of the equation, the \(\mathcal{I}\)-adic filtration on
\(\mathcal{I}/\mathcal{I}^{[p]}\) from \parref{setup-filtration} splits upon
viewing it as an \(\sO_\ell\)-module
through \(F_*\), so there is an isomorphism of \(\sO_\ell\)-modules
\[
F_*(\mathcal{I}/\mathcal{I}^{[p]}) \cong
\bigoplus\nolimits_{k = 1}^{(n-2)(p-1)} F_*T^k(\mathcal{C}_{\ell/X}).
\]
Indeed, trivialize this bundle on the two standard affine open subschemes of
\(\ell\) given by the nonvanishing locus of \(x_0\) and \(x_1\); for example,
on the chart where \(x_0 = 1\), the equation of \(X\) implies
\[
x_n
= -x_1^{d-1}x_{n-1} - x_2^pF_2 - \cdots - x_n^pF_n
\equiv -x_1^{d-1}x_{n-1} + \mathcal{I}^{[p]}
\]
so that an \(\sO_\ell\)-module basis for the restriction of
\(F_*(\mathcal{I}/\mathcal{I}^{[p]})\) thereon is given by the monomials
\[
x_1^a x_2^{b_2} \cdots x_{n-2}^{b_{n-2}} x_{n - 1}^{b_{n-1}}
\;\;\text{with}\; 0 \leq a,b_2,\ldots,b_{n-2},b_{n-1} \leq p-1
\]
and \(b_2 + \cdots + b_{n-2} + b_{n-1} > 0\). Passing to the chart where
\(x_1 = 1\) transforms the displayed basis element to a similar one
where \(x_1\) and \(x_{n-1}\) are replaced by \(x_0\) and \(x_n\); more
precisely, if \(m\) and \(0 \leq a' \leq p-1\) are the integers determined
by the relation
\[
mp + a' = -(a + b_2 + \cdots + b_{n-2}) + (d-2)b_{n-1},
\]
then
\(
x_1^a x_2^{b_2} \cdots x_{n-2}^{b_{n-2}} x_{n - 1}^{b_{n-1}}
\mapsto 
x_0^m \cdot x_0^{a'} x_2^{b_2} \cdots x_{n-2}^{b_{n-2}} x_n^{b_{n-1}}
\); note that the \(\sO_\ell\)-module structure is through \(p\)-th powers, so
that \(x_0 \cdot s = x_0^ps\) for a local section \(s\) of
\(F_*(\mathcal{I}/\mathcal{I}^{[p]})\). The transition function in the
given basis is block diagonal and preserves the exponents
\(b_2,\ldots,b_{n-2},b_{n-1}\), and this implies the claimed splitting.
Finally, by considering the basis elements where
\(b_2 + \cdots + b_{n-2} + b_{n-1} = 1\) or else by a direct computation, the
conormal bundle may be determined to be
\[
\mathcal{C}_{\ell/X}
= \mathcal{I}/\mathcal{I}^2
\cong \sO_{\ell}(-1)^{\oplus n-3} \oplus \sO_{\ell}(d-2).
\]
This has sections, whence the result follows.
\end{proof}

\subsectiondash{}\label{formal-special-hypersurfaces-exist}
To complete the proof of Theorem \parref{intro-nonample-hypersurfaces}, it
remains to see that there are smooth hypersurfaces whose equation takes the
form in \parref{formal-special-hypersurfaces}. Suppose first that \(p \nmid d\)
and that \(n = 2m-1\) is odd. Consider the hypersurface
\(X \subset \PP^{2m-1}\) of degree \(d\) defined by
\[
f \coloneqq
\begin{dcases*}
\sum\nolimits_{k = 0}^{m-1} (x_k^{d-1} x_{n-k} + x_k x_{n-k}^{d-1}) & if \(d \equiv 1 \;(\text{mod}\;p)\), and \\
\sum\nolimits_{k = 0}^{m-1} (x_k^{d-1} x_{n-k} + x_{n-k}^d) & otherwise.
\end{dcases*}
\]
By the Jacobian criterion, a singular point of \(X\) satisfies,
for each \(k = 0,\ldots,m-1\):
\[
0 =
\frac{\partial f}{\partial x_k} =
\begin{dcases*}
x_{n-k}^{d-1} \\
(d-1)x_k^{d-2}x_{n-k}
\end{dcases*}
\;\;\text{and}\;\;
0 =
\frac{\partial f}{\partial x_{n-k}} =
\begin{dcases*}
x_k^{d-1} & if \(d \equiv 1 \;(\text{mod}\; p)\), \\
x_k^{d-1} + dx_{n-k}^{d-1} & otherwise.
\end{dcases*}
\]
In either case, the only solution to all these equations is that given by
\(x_k = 0\) for all \(k = 0,\ldots,2m-1\),  so \(X\) is smooth. When
\(p \nmid d\) and \(n = 2m\) is even, let \(f\) be defined by the same formula
as above and simply take \(X\) to be the hypersurface defined by \(f + x_m^d\).
Next, suppose that \(p \mid d\), and consider
\[
f \coloneqq
x_0^{d-1} x_n +
x_n^{d-1} x_{n-2} +
x_{n-2}^{d-1} x_{n-3} +
\cdots +
x_2^{d-1} x_1 +
x_1^{d-1} x_{n-1} +
x_{n-1}^{d-1} x_0.
\]
Up to reindexing the coordinates, this equation generalizes that defining
the Klein cubic hypersurface; it is shown in \cite[Lemma 4]{AGR} that when
\(p \nmid n+1\), the hypersurface \(X \subset \PP^n\) cut out by \(f\)
is smooth. Finally, when \(p \mid n+1\), take \(X\) to be defined by
\(f + x_n^d\); then a minor variation on the argument of \emph{loc. cit.} shows
that \(X\) is smooth.
\qed

\section{Homogeneous spaces}\label{section-homogeneous}
Frobenius-trace kernels of projective rational homogeneous spaces tend to be
positive. To describe this, fix notation and write throughout this and the
following sections \(\mathbf{T} \subseteq \mathbf{P} \subseteq \mathbf{G}\) for
a maximal torus and reduced parabolic subgroup contained in a connected
reductive algebraic group, and let \(X \coloneqq \mathbf{G}/\mathbf{P}\) be the
associated projective rational homogeneous space. The first observation,
recorded in \parref{homogeneous-globally-generated}, is that Frobenius splitness
of \(X\) combined with the infinitesimal representation theory of the
opposite unipotent radical \(\mathbf{U}\) of \(\mathbf{P}\) implies that
the homogeneous vector bundle \(F_*(\omega_X^{\otimes 1-p})\), and hence
also the Frobenius-trace kernel \(\mathcal{E}_X\), is globally generated.

\subsectiondash{Global generation}\label{homogeneous-local-computations}
To explain, observe that \(F_*(\omega_X^{\otimes 1-p})\) and
\(\mathcal{E}_X\), being constructed from canonical operations on \(X\), are
homogeneous vector bundles, so they are globally generated if and only if
global sections generate them at the point \(o \in X\) corresponding to the
identity of \(\mathbf{G}\). Since \(X\) is Frobenius split
by \cite[Theorem 2]{MR:F-Splitting}, the trace sequence
\[
0 \to
\mathcal{E}_X \to
F_*(\omega_X^{\otimes 1-p}) \to
\sO_X \to
0
\]
shows that \(\mathcal{E}_X\) is generated at \(o\) if and only if
\(F_*(\omega_X^{\otimes 1-p})\) is so. Since \(F \colon X \to X\) is an
affine morphism, the latter statement is equivalent to surjectivity of the
restriction map
\[
\res \colon
\mathrm{H}^0(X,\omega_X^{\otimes 1-p}) \to
\mathrm{H}^0(X,\omega_X^{\otimes 1-p} \otimes \sO_X/\mathfrak{m}^{[p]})
\]
where \(\mathfrak{m} \subset \sO_X\) is the ideal sheaf of \(o \in X\);
in the terminology of \cite{MS:F-Seshadri}, this means that \(\mathcal{E}_X\)
is globally generated if and only if \(\omega_X^{\otimes 1-p}\) \emph{separates
\(p\)-Frobenius jets} at \(o\). 

To analyze this restriction map, decompose the parabolic subgroup
\(\mathbf{P} = \mathbf{U}^+ \rtimes \mathbf{L}\) into its unipotent radical and
a Levi factor, and let \(\mathbf{U} \subset \mathbf{G}\) be the opposite
unipotent radical. The canonical map
\(\mathbf{U} \to \mathbf{G}/\mathbf{P} = X\) is an open immersion, equivariant
with respect to multiplication by \(\mathbf{U}\) and conjugation by
\(\mathbf{L}\), and such that the Frobenius kernel
\(\mathbf{U}_1 \coloneqq \ker(F \colon \mathbf{U} \to \mathbf{U})\) is mapped
onto the subscheme \(F^{-1}(o)\). The restriction map may therefore be
identified as a map of \(\mathbf{U}_1 \rtimes \mathbf{L}\)-representations
\[
\res \colon
\mathrm{H}^0(X,\omega_X^{\otimes 1-p}) \to
\kk_{-\lambda} \otimes \kk[\mathbf{U}_1]
\]
where \(\kk[\mathbf{U}_1]\) is the coordinate ring of the finite infinitesimal
group scheme \(\mathbf{U}_1\), and \(\kk_{-\lambda}\) is the \(1\)-dimensional
\(\mathbf{L}\)-representation whose character \(-\lambda\) is the restriction of
the \(\mathbf{P}\)-character corresponding to the line bundle dual to
\(\omega_X^{\otimes 1-p}\) under the usual identification of \(\operatorname{Pic} X\)
with the character group of \(\mathbf{P}\).

\subsectiondash{}\label{homogeneous-local-computations-coordinates}
To describe the coordinate ring \(\kk[\mathbf{U}_1]\) explicitly, let
\(\mathrm{X}^*(\mathbf{T}) \supset \Phi \supset \Phi^+ \supset \Phi^+_{\mathbf{P}}\)
be, respectively, the character lattice of the maximal torus, the roots of
\(\mathbf{G}\), positive roots compatible with \(\mathbf{P}\), and the roots
appearing in \(\mathbf{U}^+\). Choose an isomorphism of varieties
\[
\mathbf{U} \cong
\prod\nolimits_{\alpha \in \Phi^+_{\mathbf{P}}} \mathbf{U}_{-\alpha}
\]
given by an ordered product of root subgroups: see \cite[II.1.2 and
II.1.10]{Jantzen:RAGS}. Since
\(\mathbf{U}_{-\alpha} \cong \mathbf{G}_a\) for each \(\alpha\), this
identifies \(\mathbf{U}\) as an affine space with coordinates \(x_\alpha\)
for each \(\alpha \in \Phi^+_{\mathbf{P}}\). In this way, the coordinate
ring of \(\mathbf{U}_1 \cong F^{-1}(o)\) may be identified as the Artinian
ring
\[
\kk[\mathbf{U}_1] \cong
\mathrm{H}^0(X, \sO_X/\mathfrak{m}^{[p]}) \cong
\kk[x_\alpha : \alpha \in \Phi^+_{\mathbf{P}}]/(x_\alpha^p : \alpha \in \Phi^+_{\mathbf{P}}).
\]
Under any such isomorphism, the monomials span weight spaces for
\(\mathbf{T} \subseteq \mathbf{L}\) where a single coordinate \(x_\alpha\)
is of weight \(\alpha\). There is a unique monomial of highest weight given by
\[
\tau \coloneqq \Big(\prod\nolimits_{\alpha \in \Phi^+_{\mathbf{P}}} x_\alpha\Big)^{p-1}
\]
with weight
\(\lambda = (p-1)\sum\nolimits_{\alpha \in \Phi^+_{\mathbf{P}}} \alpha\).
Since \(\mathbf{U}_1\) is unipotent, its action on \(\kk[\mathbf{U}_1]\) cannot
increase weights, and this implies that the complementary
\(\mathbf{T}\)-subspace \(\kk[\mathbf{U}_1]_{< \lambda}\) spanned by all
monomials other than \(\tau\) is a \(\mathbf{U}_1\)-stable hyperplane; in fact,
it is the unique \(\mathbf{U}_1\)-stable hyperplane by \cite[Lemma
5.4]{Kempf:RAGS}. This observation leads to:

\begin{Proposition}\label{homogeneous-globally-generated}
The Frobenius-trace kernel of a rational homogeneous space is globally
generated.
\end{Proposition}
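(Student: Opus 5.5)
By the reductions in \parref{homogeneous-local-computations}, it suffices to show that the restriction map
\[
\res \colon \mathrm{H}^0(X,\omega_X^{\otimes 1-p}) \to \kk_{-\lambda}\otimes\kk[\mathbf{U}_1]
\]
is surjective. This map is a morphism of \(\mathbf{U}_1\rtimes\mathbf{L}\)-representations, so its image \(W\) is a \(\mathbf{U}_1\)-stable subspace of \(\kk_{-\lambda}\otimes\kk[\mathbf{U}_1]\). The plan is to prove two things and combine them:
\begin{enumerate}
\item \(W\) contains \(\tau\), or more precisely the functional \(\tau^\vee\) of \(\kk[\mathbf{U}_1]\) picking out the top monomial, does not vanish on \(W\);
\item every nonzero \(\mathbf{U}_1\)-subrepresentation of \(\kk[\mathbf{U}_1]\) contains the socle, and this forces \(W\) to be everything.
\end{enumerate}

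For the second point, use that \(\kk[\mathbf{U}_1]\) is the regular representation of the finite infinitesimal unipotent group \(\mathbf{U}_1\). It is self-dual and injective, and its socle is one-dimensional, spanned by the invariants, which are the constants. Dually, it has a unique maximal \(\mathbf{U}_1\)-stable subspace, and this is the hyperplane \(\kk[\mathbf{U}_1]_{<\lambda}\) recorded in \parref{homogeneous-local-computations-coordinates} via \cite[Lemma 5.4]{Kempf:RAGS}. Hence any \(\mathbf{U}_1\)-stable subspace not contained in \(\kk[\mathbf{U}_1]_{<\lambda}\) is the whole ring, since a proper stable subspace lies in some maximal stable one. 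The twist by \(\kk_{-\lambda}\) is irrelevant for \(\mathbf{U}_1\). It therefore suffices to exhibit one section \(s\) of \(\omega_X^{\otimes 1-p}\) whose restriction has nonzero coefficient on \(\tau\).

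For the first point, use Frobenius splitting. By \cite[Theorem 2]{MR:F-Splitting}, the trace map \(\tau_1 \colon F_*(\omega_X^{\otimes 1-p})\to\sO_X\) is surjective on global sections, so some \(s\) has \(\tau_1(s)=1\). The key local fact to verify is that on the open cell \(\mathbf{U}\cong\mathbf{A}^N\), with \(\omega\) trivialized by \(dx_\alpha\), the trace is the Cartier operator: it sends the monomial \(\prod x_\alpha^{a_\alpha}\) to \(\prod x_\alpha^{(a_\alpha-p+1)/p}\) when all \(a_\alpha\equiv p-1\) mod \(p\), and to \(0\) otherwise. Evaluating \(\tau_1(s)\) at \(o\) therefore reads off exactly the coefficient of \(\tau\) in \(\res(s)\) modulo \(\mathfrak m^{[p]}\). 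Since \(\tau_1(s)(o)=1\neq0\), the image \(W\) is not contained in \(\kk[\mathbf{U}_1]_{<\lambda}\), and the second point gives surjectivity.

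The main obstacle is making this compatibility between the trace at \(o\) and the \(\tau\)-coefficient precise, including the trivialization of \(\omega_X\) on \(\mathbf{U}\) and its behaviour under the identification \(\mathbf{U}_1\cong F^{-1}(o)\). I would handle it through the standard description of the trace of affine space in coordinates, which is valid because the open immersion \(\mathbf{U}\to X\) is compatible with Frobenius.
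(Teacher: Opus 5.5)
Your proposal is correct and is essentially the paper's proof. In both, the image of \(\res\) is a \(\mathbf{U}_1\)-submodule and the only \(\mathbf{U}_1\)-stable hyperplane is \(\kk[\mathbf{U}_1]_{<\lambda}\) by \cite[Lemma 5.4]{Kempf:RAGS}. A Frobenius splitting section then has nonzero \(\tau\)-coefficient: the paper cites this from \cite[Theorem 1.3.8]{BK:F-Splitting}, while you re-derive it via the Cartier operator on \(\mathbf{U}\). The trivialization compatibility you flag as the main obstacle is harmless, since \(\bigwedge_\alpha \mathrm{d}x_\alpha\) is \(\mathbf{U}\)-invariant (a unipotent group has no nontrivial characters). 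So \((\bigwedge_\alpha \mathrm{d}x_\alpha)^{\otimes 1-p}\) is, up to a scalar, the equivariant trivialization used to define \(\res\).
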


\begin{proof}
By the discussion of \parref{homogeneous-local-computations}, it suffices to
show that \(\omega_X^{\otimes 1-p}\) separates \(p\)-Frobenius jets at
\(o \in X\). Upon choosing an isomorphism of \(\mathbf{U}\) with a product
of root groups as in \parref{homogeneous-local-computations-coordinates},
view the restriction map as a map of \(\mathbf{U}_1 \rtimes \mathbf{L}\)-representations
\[
\res \colon
\mathrm{H}^0(X,\omega_X^{\otimes 1-p}) \to
\kk_{-\lambda} \otimes \kk[x_\alpha : \alpha \in \Phi^+_{\mathbf{P}}]/
(x_\alpha^p : \alpha \in \Phi^+_{\mathbf{P}}).
\]
If \(\res\) were not surjective, the discussion of
\parref{homogeneous-local-computations-coordinates} implies that its image must
be contained in the hyperplane
\(\kk_{-\lambda} \otimes \kk[\mathbf{U}_1]_{< \lambda}\). But \(\tau\) is in
the image of \(\res\) since \(X\) is Frobenius split: see
\cite[Theorem 1.3.8]{BK:F-Splitting}. Therefore \(\res\) is surjective, and so
\(\mathcal{E}_X\) is globally generated.
\end{proof}

Although \(\mathcal{E}_X\) is always globally generated for
\(X = \mathbf{G}/\mathbf{P}\), it certainly not
always ample: If \(\mathbf{P}\) were contained in a larger
parabolic \(\mathbf{P}' \subsetneq \mathbf{G}\), then there is a
fibration \(X \to \mathbf{G}/\mathbf{P}'\) with fibres
\(\mathbf{P}'/\mathbf{P}\), and \parref{formal-finite-morphisms} implies
\(\mathcal{E}_X\) is not ample. Ampleness thus forces \(\mathbf{P}\) to be a
\emph{maximal} parabolic subgroup; geometrically, this means \(X\) must be a
\emph{generalized Grassmannian}, that is, a rational homogeneous
space of of Picard rank \(1\).  Unfortunately, it is still too optimistic to
hope that \(\mathcal{E}_X\) is always ample in this case, and the following
example shows that some hypothesis on the characteristic \(p\) is required:

\subsectiondash{Example}\label{homogeneous-quadric-threefold}
A smooth quadric hypersurface \(X \subset \PP^4\), a homogeneous space for
\(\mathbf{G} = \mathbf{SO}_5\), has ample Frobenius-trace kernel \(\mathcal{E}_X\)
if and only if \(p > 2\). When \(p > 2\), \cite[Theorem 2]{Achinger:Quadrics}
shows that all the summands appearing in \(\mathcal{E}_X\) are ample, whereas
for \(p = 2\), Theorem 1 of \emph{loc. cit.} gives
\[
\mathcal{E}_X \cong \sO_X(1)^{\oplus 5} \oplus \mathcal{S}^\vee
\]
where the dual spinor bundle \(\mathcal{S}^\vee\) is not ample. More directly,
this also follows from
\parref{setup-numerical-obstruction-complete-intersections}.
\qed
\smallskip

Nevertheless, we conjecture that, up to excluding low characteristic
exceptions, the Frobenius-trace kernel \(\mathcal{E}_X\) of a generalized
Grassmannian is ample. In the remainder of the section, we refine the
techniques used in \parref{homogeneous-globally-generated} to develop a method
of showing the stronger property that \(\mathcal{E}_X \otimes \sO_X(-1)\) is
globally generated, where \(\sO_X(1)\) is the ample generator of \(\Pic X\).

\subsectiondash{Ampleness}\label{homogeneous-ampleness-computation}
Let \(X\) be a generalized Grassmannian, write \(\sO_X(1)\) for the ample
generator of \(\Pic X\), and consider the twisted Frobenius-trace sequence
\[
0 \to
\mathcal{E}_X \otimes \sO_X(-1) \to
F_*(\omega_X^{\otimes 1 - p} \otimes \sO_X(-p)) \to
\sO_X(-1) \to
0.
\]
The considerations of \parref{homogeneous-local-computations-coordinates} and
\parref{homogeneous-globally-generated} imply that, at \(o \in X\),
the trace map to \(\sO_X(-1)\) is canonically identified as the projection onto
the highest weight space of \(\kk[\mathbf{U}_1]\) spanned by the element
\(\tau\). Since \(\sO_X(-1)\) has no sections, the restriction map from
global sections of \(\omega_X^{\otimes 1-p} \otimes \sO_X(-p)\) to
\(p\)-Frobenius jets at \(o\) factors through the hyperplane
\[
\res' \colon
\mathrm{H}^0(X, \omega_X^{\otimes 1-p} \otimes \sO_X(-p)) \to
\kk_{-\lambda + p\varpi} \otimes \kk[\mathbf{U}_1]_{< \lambda}
\]
where \(\varpi\) is the restriction to \(\mathbf{L}\) of the
weight corresponding to \(\sO_X(1)\). Arguing as in
\parref{homogeneous-local-computations} shows that \(\mathcal{E}_X \otimes \sO_X(-1)\)
is globally generated if and only if \(\res'\) is surjective. Observe that such
a statement is optimal: the line bundle \(\omega_X^{\otimes 1-p} \otimes
\sO_X(1-p)\) separates \(p\)-Frobenius jets because \(X\) is Frobenius split
compatibly with the Schubert divisor \(X \setminus \mathbf{U}\) opposite to
\(\mathbf{U}\): see \cite[Theorems 2.2.5 and 1.4.10]{BK:F-Splitting}.

\subsectiondash{}\label{homogeneous-ampleness-lie-algebra}
To formulate a suitable surjectivity criterion for \(\res'\), rephrase the
observation recalled in \parref{homogeneous-local-computations-coordinates} as
follows: \(\kk[\mathbf{U}_1]\) itself is the only \(\mathbf{U}_1\)-submodule
containing the trace element \(\tau\). Applying the Demazure--Gabriel
correspondence from \cite[II, \S7 n\textdegree 4]{DG}---see also
\cite[I.9.6]{Jantzen:RAGS}---which gives an equivalence between the
representation theory of the height \(1\) infinitimesmal group scheme
\(\mathbf{U}_1\) and its \(p\)-Lie algebra
\(\mathfrak{u} = \operatorname{Lie} \mathbf{U}_1 = \operatorname{Lie} \mathbf{U}\),
this equivalently says that every element of \(\kk[\mathbf{U}_1]\) may be
expressed as a linear combination of elements obtained by successively
acting by \(\mathfrak{u}\) on \(\tau\):
\[
\kk[\mathbf{U}_1] =
\operatorname{span} \{\tau\} +
\sum\nolimits_{k \geq 1}
\operatorname{span}\{X_{i_1} \cdots X_{i_k} \cdot \tau : X_{i_1}, \ldots, X_{i_k} \in \mathfrak{u}\}.
\]
Since the action of \(\mathfrak{u}\) decreases weights, the sum on the right
must be the hyperplane \(\kk[\mathbf{U}_1]_{< \lambda}\). This gives the following
generalization of Kempf's criterion: \(\kk[\mathbf{U}_1]_{< \lambda}\) is itself
the only \(\mathbf{U}_1\)-submodule that contains the image
\(\mathfrak{u} \cdot \tau\) of the trace element under the action
of its Lie algebra. The following makes this more explicit, and also gives a
geometric consequence; for simplicity, we suppress the global weight shift by
\(-\lambda + p\varpi\) in the statement:

\begin{Proposition}\label{homogeneous-ampleness-criterion}
The twisted Frobenius-trace kernel \(\mathcal{E}_X \otimes \sO_X(-1)\)
is globally generated if and only if the image of \(\res'\) contains
for each \(\alpha \in \Phi^+_{\mathbf{P}}\) the element
\[
-\partial_\alpha \tau
\coloneqq
  x_{\alpha}^{p-2}
  \Big(
    \prod\nolimits_{\beta \in \Phi^+_{\mathbf{P}} \setminus \alpha}
    x_{\beta}
  \Big)^{p-1}
\in
\kk[\mathbf{U}_1].
\]
In particular, in this case, \(X\) is Frobenius split compatibly with the
Schubert divisor \(X \setminus \mathbf{U}\) opposite to \(\mathbf{U}\) and
relative to each root hyperplane of \(\mathbf{U}\).
\end{Proposition}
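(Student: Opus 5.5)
The plan is to combine the reformulation of \parref{homogeneous-ampleness-computation} with the Lie algebra version of Kempf's criterion from \parref{homogeneous-ampleness-lie-algebra}. By \parref{homogeneous-ampleness-computation}, \(\mathcal{E}_X \otimes \sO_X(-1)\) is globally generated if and only if \(\res'\) surjects onto \(\kk[\mathbf{U}_1]_{<\lambda}\), suitably twisted. The image of \(\res'\) is a \(\mathbf{U}_1 \rtimes \mathbf{L}\)-submodule of \(\kk[\mathbf{U}_1]_{<\lambda}\), since \(\res'\) is equivariant. So "only if" is trivial: each \(-\partial_\alpha\tau\) has weight \(\lambda - \alpha < \lambda\), and hence lies in \(\kk[\mathbf{U}_1]_{<\lambda}\).

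For "if", the point is that \(\kk[\mathbf{U}_1]_{<\lambda}\) is the only \(\mathbf{U}_1\)-submodule containing \(\mathfrak{u}\cdot\tau\). It therefore suffices to show that \(\mathfrak{u}\cdot\tau\) is spanned by the elements \(\partial_\alpha\tau\), up to the weight twist. To see this, use the coordinates of \parref{homogeneous-local-computations-coordinates}. The root vector \(X_{-\alpha} \in \mathfrak{u}\) acts on \(\kk[\mathbf{U}_1]\) by a left-invariant derivation. In the ordered product coordinates, this derivation has the form \(\partial/\partial x_\alpha + \sum c\, m\, \partial/\partial x_\beta\), where the \(m\) are monomials of positive weight and the sum runs over \(\beta\) of larger height, arising from commutators. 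The correction terms lower the weight of \(\tau\) by \(\alpha\), and there are only finitely many monomials of weight \(\lambda-\alpha\).

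The step I expect to be the main obstacle is checking that these correction terms do not spoil the computation. Here is the argument I would use. Any monomial of weight \(\lambda - \alpha\) is \(\tau\) with exponents lowered by a multiset of roots summing to \(\alpha\). Applying a term \(m\,\partial_\beta\) with \(m \neq 1\) to \(\tau\) gives zero, because \(m\) multiplies some \(x_\gamma^{p-1}\) up to \(x_\gamma^{p} = 0\). Hence \(X_{-\alpha}\tau = \partial_\alpha\tau = -x_\alpha^{p-2}\prod_{\beta\neq\alpha} x_\beta^{p-1}\), exactly as in the statement. Since \(\mathfrak{u}\) is spanned by the \(X_{-\alpha}\), it follows that \(\mathfrak{u}\cdot\tau = \operatorname{span}\{\partial_\alpha\tau\}\). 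This proves the equivalence.

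For the geometric consequence, I would follow \cite[Theorem 1.3.8]{BK:F-Splitting}, as in \parref{homogeneous-globally-generated} and \parref{homogeneous-ampleness-computation}. Take the section \(\sigma_\alpha \in \mathrm{H}^0(X, \omega_X^{\otimes 1-p}\otimes\sO_X(-p))\) mapping to \(-\partial_\alpha\tau\), and multiply it by the \(p\)-th power \(s^p\) of the section of \(\sO_X(1)\) cutting out the Schubert divisor \(X\setminus\mathbf{U}\). This gives a section of \(\omega_X^{\otimes 1-p}\). Its restriction to the Frobenius jets at \(o\), and the \(\mathbf{U}\)-translates of these, show that it is of the form \(x_\alpha^{p-2}\prod_{\beta \neq \alpha} x_\beta^{p-1}\) times a unit on \(\mathbf{U}\). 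Multiplying further by \(x_\alpha\) then yields a section whose \(\tau\)-coefficient is nonzero, hence a Frobenius splitting. This splitting vanishes to order \(\geq p\) along the Schubert divisor, which gives compatibility, and it vanishes to order \(\geq p-1\) along every root hyperplane \(\{x_\beta=0\}\). That is the asserted splitting relative to those divisors.
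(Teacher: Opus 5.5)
Your proof of the equivalence follows the paper: equivariance of \(\res'\), the generalized Kempf criterion of \parref{homogeneous-ampleness-lie-algebra}, and the fact that the correction terms of the root derivation annihilate \(\tau\). One point needs a sentence. A term \(m\,\partial_\beta\) kills \(\tau\) only if \(m\) involves some \(x_\gamma\) with \(\gamma\neq\beta\), because \(x_\beta\partial_\beta\tau=-\tau\). This holds since \(m\) has weight \(\beta-\alpha\), which cannot equal \(j\beta\) with \(j\geq 1\) because \(\alpha\) is positive.

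The argument for the geometric consequence has a genuine gap.

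\textbf{The monomial-times-unit claim is false.} You assert that \(s^p\sigma_\alpha\) equals \(x_\alpha^{p-2}\prod_{\beta\neq\alpha}x_\beta^{p-1}\) times a unit on \(\mathbf{U}\). But \(\res'\) determines \(\sigma_\alpha\) only modulo \(\mathfrak{m}^{[p]}\). Since \(\sigma_\alpha\) is not \(\mathbf{U}\)-invariant, translating its jet at \(o\) tells you nothing about its jets elsewhere. For instance, on \(\Gr(2,4)\) the section built in \parref{homogeneous-A} restricts on \(\mathbf{U}\) to \(\pm x_{1,1}^{p-2}x_{2,2}^{p-1}(x_{1,1}x_{2,2}-x_{1,2}x_{2,1})^{p-1}\), which is not of that form.

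\textbf{The line bundles do not match.} \(x_\alpha s^p\sigma_\alpha\) is a section of \(\omega_X^{\otimes 1-p}\otimes\sO_X(1)\), not of \(\omega_X^{\otimes 1-p}\). Consistently with this, no splitting can vanish to order \(\geq p\) along a prime divisor \(\{t=0\}\): the factor \(t^p\) pulls out of the trace as \(t\), which would force \(1\in(t)\).

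\textbf{Vanishing along every root hyperplane is neither claimed nor true.} On \(\Gr(2,4)\), where \(\omega_X^{-1}\cong\sO_X(4)\) and \(\dim X=4\), this would put five distinct prime divisors (the four root hyperplane closures and the Schubert divisor), each of class at least \(\sO_X(1)\), with multiplicity \(p-1\) into the zero locus of a section of \(\sO_X(4(p-1))\). That is impossible.

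\textbf{What is actually needed.} The statement asks, for each \(\alpha\) separately, for a splitting divisible by \(x_\alpha\) that vanishes to order \(p-1\) along \(X\setminus\mathbf{U}\). No global knowledge of \(\sigma_\alpha\) is required. The section \(s^{p-1}x_\alpha\sigma_\alpha\) of \(\omega_X^{\otimes 1-p}\) has Frobenius jet at \(o\) equal to \(x_\alpha\cdot(-\partial_\alpha\tau)=\tau\). Whether a section is a splitting depends only on this \(\tau\)-coefficient, so it is a splitting by \cite[Theorem 1.3.8]{BK:F-Splitting}. It vanishes to order \(p-1\) along the Schubert divisor, giving compatibility as in the remark closing \parref{homogeneous-ampleness-computation}. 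It is also divisible by \(x_\alpha\), giving the splitting relative to the root hyperplane \(\{x_\alpha=0\}\). This is exactly the paper's argument.
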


\begin{proof}
Following the discussion of \parref{homogeneous-ampleness-computation} and
\parref{homogeneous-ampleness-lie-algebra}, it suffices to show that the action
on \(\tau\) of the element \(X_\alpha \in \mathfrak{u}\) dual to the root
coordinate \(x_\alpha\) is, up to scalars, by partial differentiation with
respect to \(x_\alpha\). By construction, \(X_\alpha\) acts on
\(\kk[\mathbf{U}_1]\) by derivations of weight \(-\alpha\); since
\(\mathbf{U}_{-\alpha} \cong \mathbf{G}_a\), it acts on \(x_\alpha\) by its
dual derivation \(\partial_{\alpha}\). Thus it may be expressed as an operator
of the form
\[
X_\alpha =
\partial_{\alpha} +
\sum\nolimits_{\beta \in \Phi^+_{\mathbf{P}} \setminus \alpha}
f_{\beta} \partial_\beta
\]
for some \(f_\beta \in \kk[\mathbf{U}_1]\) of weight \(\beta - \alpha\).
Weight considerations imply that any nonzero term appearing in
\(f_\beta\) must contain a root coordinate \(x_\gamma\) other than
\(x_\beta\); since \(x_{\gamma}^p = 0\) in \(\kk[\mathbf{U}_1]\), this implies
that \(f_\beta \partial_\beta \tau = 0\) for each \(\beta\), yielding the
first statement. For the second statement, simply note that the linear
function \(x_\alpha\) defining the \(\alpha\) root hyperplane of \(\mathbf{U}\)
is a section of \(\sO_X(1)\), so that \(x_\alpha \partial_\alpha \tau\)
is a section of \(\omega_X^{\otimes 1-p} \otimes \sO_X(1-p)\) that provides
a Frobenius splitting relative to the root hyperplane defined by \(x_\alpha\)
and compatible with the Schubert divisor \(X \setminus \mathbf{U}\), as per the
remark closing \parref{homogeneous-ampleness-computation}.
\end{proof}

To make this criterion more effective, notice that since \(\mathbf{L}\)
normalizes \(\mathbf{U}_1\), the action of \(\mathbf{L}\) on
\(\kk[\mathbf{U}_1]\) intertwines the action of the Lie algebra
\(\mathfrak{u}\), where \(\mathbf{L}\) acts on \(\mathfrak{u}\) via the adjoint
representation. This gives the following structure:

\begin{Corollary}\label{homogeneous-ampleness-u}
The \(\partial_\alpha\tau\) with \(\alpha \in \Phi^+_{\mathbf{P}}\)
span an \(\mathbf{L}\)-submodule of \(\kk[\mathbf{U}_1]\), isomorphic to
\[
\pushQED{\qed}
\operatorname{span}\{\partial_\alpha\tau : \alpha \in \Phi^+_{\mathbf{P}}\}
\cong \mathfrak{u} \cdot \tau
\cong \mathfrak{u} \otimes \kk_{\lambda}.
\qedhere
\popQED
\]
\end{Corollary}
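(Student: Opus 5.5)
The plan is to consider the linear map
\[
\phi \colon \mathfrak{u} \to \kk[\mathbf{U}_1], \qquad X \mapsto X \cdot \tau,
\]
where \(\mathfrak{u}\) acts on \(\kk[\mathbf{U}_1]\) through the Demazure--Gabriel correspondence of \parref{homogeneous-ampleness-lie-algebra}. We will show three things: \(\phi\) is injective, its image is \(\operatorname{span}\{\partial_\alpha\tau\}\), and it is \(\mathbf{L}\)-equivariant once \(\mathfrak{u}\) is twisted by \(\kk_\lambda\).

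For the image, take the basis \(X_\alpha\), \(\alpha \in \Phi^+_{\mathbf{P}}\), of \(\mathfrak{u}\) dual to the root coordinates. The computation in the proof of \parref{homogeneous-ampleness-criterion} already shows that \(X_\alpha \cdot \tau = \partial_\alpha \tau\). In that proof the correction terms \(f_\beta\partial_\beta\tau\) vanish because each \(f_\beta\) involves some \(x_\gamma\) with \(\gamma \neq \beta\), and \(x_\gamma^p = 0\). Hence \(\phi(\mathfrak{u}) = \operatorname{span}\{\partial_\alpha\tau\}\).

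For injectivity, note that \(\partial_\alpha\tau = -x_\alpha^{p-2}\prod_{\beta\neq\alpha}x_\beta^{p-1}\) is, up to sign, a single monomial. This monomial is nonzero in \(\kk[\mathbf{U}_1]\) since every exponent is at most \(p-1\), and it has weight \(\lambda-\alpha\). For distinct \(\alpha\) these monomials are distinct, so the \(\partial_\alpha\tau\) are linearly independent and \(\phi\) is injective.

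For equivariance, use that \(\mathbf{L}\) normalizes \(\mathbf{U}_1\) and acts on \(\mathfrak{u}\) by the adjoint representation. The action map \(\mathfrak{u} \otimes \kk[\mathbf{U}_1] \to \kk[\mathbf{U}_1]\) is then \(\mathbf{L}\)-equivariant, that is, \(l\cdot(X\cdot f) = (\mathrm{Ad}(l)X)\cdot(l\cdot f)\). Since \(\tau\) spans the highest weight line, which is \(\mathbf{L}\)-stable, we have \(l\cdot\tau = \lambda(l)\tau\). Therefore \(l\cdot\phi(X) = \lambda(l)\,\phi(\mathrm{Ad}(l)X)\), so \(\phi\) is an \(\mathbf{L}\)-isomorphism \(\mathfrak{u}\otimes\kk_\lambda \to \mathfrak{u}\cdot\tau\). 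In particular the span is \(\mathbf{L}\)-stable.

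The step needing the most care is \(\mathbf{L}\)-stability of the line \(\kk\tau\), since a priori only \(\mathbf{T}\) preserves weight spaces. We obtain it from \parref{homogeneous-local-computations-coordinates}: \(\kk[\mathbf{U}_1]_{<\lambda}\) is the unique \(\mathbf{U}_1\)-stable hyperplane, so \(\mathbf{L}\), which normalizes \(\mathbf{U}_1\), preserves it. Dually, \(\kk\tau\) is the socle for the dual action and is likewise \(\mathbf{L}\)-stable. Alternatively, \(\tau\) is the restriction of a Frobenius splitting section of \(\omega_X^{1-p}\) at \(o\), and that line is canonical.
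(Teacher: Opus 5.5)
\textbf{The overall route is the paper's, but one step is not justified.} As in the paper, you use that \(\mathbf{L}\) normalizes \(\mathbf{U}_1\), so the \(\mathfrak{u}\)-action on \(\kk[\mathbf{U}_1]\) is \(\mathbf{L}\)-equivariant with \(\mathbf{L}\) acting on \(\mathfrak{u}\) by \(\operatorname{Ad}\). You also use \(X_\alpha\cdot\tau = \partial_\alpha\tau\) from the proof of \parref{homogeneous-ampleness-criterion}. Your injectivity check via distinct monomials is a detail the paper leaves implicit.

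The gap is the step you flagged yourself: your argument that \(\kk\tau\) is \(\mathbf{L}\)-stable does not work. Kempf's uniqueness does make \(H = \kk[\mathbf{U}_1]_{<\lambda}\) stable under \(\mathbf{L}\). But an invariant hyperplane does not determine an invariant complement, so a priori \(l\cdot\tau = \lambda(l)\tau + h\) with \(h \in H\). Then \(l\cdot(X\cdot\tau)\) picks up the extra term \((\operatorname{Ad}(l)X)\cdot h\), which you have no control over. ``Dualizing'' Kempf's statement gives an \(\mathbf{L}\)-stable line \(H^\perp\) in \(\kk[\mathbf{U}_1]^\vee\), not in \(\kk[\mathbf{U}_1]\). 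Matching \(H^\perp\) with \(\kk\tau\) uses the monomial dual basis, which is not \(\mathbf{L}\)-equivariant. Indeed, under the natural equivariant pairing \((f,g)\mapsto\) \(\tau\)-coefficient of \(fg\), the line \(H^\perp\) corresponds to the constants, not to \(\kk\tau\). Your Frobenius-splitting alternative has the same defect. The trace canonically gives only the functional ``\(\tau\)-coefficient'', i.e.\ \(H\) again, and a splitting restricted to \(o\) equals \(\tau\) only modulo \(H\).

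\textbf{How to close it.} If by ``socle'' you meant the socle of \(\kk[\mathbf{U}_1]\) as a ring, that is the right statement, but it needs its own argument.
\begin{itemize}
\item In \(\kk[x_\alpha]/(x_\alpha^p)\) one has \(\operatorname{Ann}(\mathfrak{m}) = \kk\tau\). This holds because \(x_\alpha\) kills exactly the monomials with \(x_\alpha\)-exponent \(p-1\) and sends the other monomials injectively to monomials.
\item \(\mathbf{L}\) acts on \(\kk[\mathbf{U}_1]\) by ring automorphisms preserving the augmentation ideal \(\mathfrak{m}\), since it acts on \(\mathbf{U}_1\) by conjugation.
\item Hence \(\kk\tau\) is \(\mathbf{L}\)-stable, and \(\mathbf{L}\) acts on it by the character \(\lambda\), since that is its \(\mathbf{T}\)-weight.
\end{itemize}
Alternatively, as in \parref{homogeneous-level}: \(\mathbf{L}\) preserves levels, and every \(\alpha \in \Phi^+_{\mathbf{P}}\) has level \(\geq 1\). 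So the only monomial of the top level \(N = (p-1)\sum_\alpha \operatorname{level}(\alpha)\) is \(\tau\). With either fix, your identity \(l\cdot(X\cdot\tau) = \lambda(l)\,(\operatorname{Ad}(l)X)\cdot\tau\) holds, and the corollary follows exactly as in the paper.
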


\subsectiondash{Levels}\label{homogeneous-level}
More precise structure on the representations of \(\mathbf{L}\) will prove
useful. For this, let \(\Delta \subset \Phi^+\) be a set of simple
roots, and let \(\Delta_{\mathbf{L}} \subset \Delta\) be the subset of simple
roots in \(\mathbf{L}\). Given a weight \(\beta \in \mathrm{X}^*(\mathbf{T})\)
in the cone spanned by \(\Phi^+\), write
\[
\beta =
\sum\nolimits_{\alpha \in \Delta_{\mathbf{L}}} c_\alpha \alpha +
\sum\nolimits_{\alpha \in \Delta \setminus \Delta_{\mathbf{L}}} d_\alpha \alpha
\;\;\text{for integers}
\; c_\alpha, d_\alpha \geq 0.
\]
The integer \(\sum_\alpha d_\alpha\) is the \emph{level} of \(\beta\), and the
tuple \((d_\alpha)_{\alpha \in \Delta \setminus \Delta_{\mathbf{L}}}\) is
the \emph{shape} of \(\beta\). Since the Levi action shifts weights by
combinations of \(\Delta_{\mathbf{L}}\), levels are preserved, so any
representation of \(\mathbf{L}\) has a direct sum decomposition into levels. In
particular, this applies to its adjoint representation on \(\mathfrak{u}\) and
also its action on the coordinate ring of \(\mathbf{U}_1\); in the latter case,
this provides a multiplicative grading and the level \(\ell \geq 0\) piece may
be described in terms of the root coordinates \(x_\alpha\) from
\parref{homogeneous-local-computations-coordinates} as
\[
\kk[\mathbf{U}_1]_\ell \coloneqq
\operatorname{span}\{
x_{\alpha_1} \cdots x_{\alpha_k} :
\alpha_1,\ldots,\alpha_k \in \Phi^+_{\mathbf{P}}
\;\text{with}\;
\operatorname{level}(\alpha_1 + \cdots + \alpha_k) = \ell\}.
\]
In particular, the constants span level \(0\) and the trace element \(\tau\)
spans the top level \(N\). The elements \(\partial_\alpha\tau\) of interest
from \parref{homogeneous-ampleness-criterion} are typically spread across
several levels near the top of \(\kk[\mathbf{U}_1]\) depending, by
\parref{homogeneous-ampleness-u}, on the level decomposition of \(\mathfrak{u}\).
See \cite{ABS} and \cite[\S17.1]{MT:LAG} for more.

\section{Isotropic Grassmannian}\label{section-grassmannian}
In this section, we apply the methods developed in
\S\parref{section-homogeneous} to show that the twisted Frobenius-trace kernel
is globally generated for most generalized Grassmannian
\(X = \mathbf{G}/\mathbf{P}\). In particular, this implies that
\(\mathcal{E}_X\) is ample for such homogeneous spaces, yielding sufficiency in
Theorem \parref{intro-homogeneous-spaces}. Our method relies on some explicit
computations and hence works best on \emph{isotropic Grassmannian}, by which we
mean any \(X\) that may be described as a subvariety of the usual Grassmannian
parameterizing linear spaces isotropic for some additional structure. Most
familiar are those of \emph{classical type}:
\begin{itemize}
\item[(A)] the Grassmannian \(\Gr(k,n)\) of \(k\)-dimensional linear subspaces in an
\(n\)-dimensional vector space, where \(n \geq 2\) and \(1 \leq k \leq n-1\);
\item[(C)] the symplectic Grassmannian \(\SGr(k,n)\) parameterizing
totally isotropic subspaces for a given nondegenerate symplectic form
\(\omega\), where \(n = 2m \geq 4\) and \(1 \leq k \leq m\); and
\item[(B/D)] the orthogonal Grassmannian \(\OGr(k,n)\) parameterizing
totally isotropic subspaces for a given nondegenerate quadratic form \(q\),
where \(n \geq 5\) and \(1 \leq k < \lfloor\frac{n-1}{2}\rfloor\) or
\(k = \lfloor \frac{n}{2} \rfloor\).
\end{itemize}
In terms of Dynkin diagrams, these are of types \(\mathrm{A}_{n-1}\),
\(\mathrm{C}_m\), and, in the orthogonal case, \(\mathrm{B}_m\) if \(n = 2m+1\)
and \(\mathrm{D}_m\) if \(n = 2m\). We also study the two generalized
Grassmannian of type \(\mathrm{G}_2\): one being a quadric fivefold, and the
other---referred to as the \emph{\(\mathrm{G}_2\)-Grassmannian}---is a
\(5\)-dimensional subvariety of \(\Gr(2,7)\) parameterizing subspaces totally
isotropic for a generic alternating \(3\)-form. We refer to \cite{grassmannian}
for an interactive guide to these objects. With this notation, the result is:

\begin{Theorem}\label{homogeneous-theorem}
Let \(X\) be an isotropic Grassmannian. Then \(\mathcal{E}_X(-1)\) is
globally generated except possibly when:
\(X = \mathbf{SGr}(k,2m)\) with \(2 \leq k \leq m\) for \(p = 2\);
\(X = \mathbf{OGr}(m-1,2m+1)\) for \(p = 2\); and
\(X\) is the \(\mathrm{G}_2\)-Grassmannian for \(p = 2, 3\).
\end{Theorem}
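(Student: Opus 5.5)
The proof will go through the criterion \parref{homogeneous-ampleness-criterion}. For each isotropic Grassmannian \(X\), the task is to exhibit, for every \(\alpha \in \Phi^+_{\mathbf{P}}\), a global section of \(\omega_X^{\otimes 1-p}\otimes\sO_X(-p)\) whose \(p\)-Frobenius jet at \(o\) is \(\partial_\alpha\tau\). By \parref{homogeneous-ampleness-u}, the span of the \(\partial_\alpha\tau\) is the \(\mathbf{L}\)-module \(\mathfrak{u}\otimes\kk_\lambda\), and the image of \(\res'\) is \(\mathbf{L}\)-stable. It therefore suffices to hit, in each level of \(\mathfrak{u}\) (see \parref{homogeneous-level}), a generator of each irreducible \(\mathbf{L}\)-constituent, for instance the lowest weight vector \(\partial_\alpha\tau\) for \(\alpha\) the root of lowest (or highest) weight at that level. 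For type A and the quadrics there is a single level; for \(\SGr\), \(\OGr\) and \(\mathrm{G}_2\) there are two levels, or a small number of them.

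\textbf{Construction of the sections.} I would write the sections explicitly in the Plücker-type coordinates of the ambient Grassmannian. On the big cell \(\mathbf{U}\), with \(X\) embedded as isotropic \(k\)-planes given as graphs of matrices \(A\) (with \(A\) symmetric or skew as appropriate), the minors of \(A\) are sections of \(\sO_X(1)\). The anticanonical bundle is \(\sO_X(i_X)\), where \(i_X\) is the index, and the Mehta--Ramanathan splitting section of \(\omega_X^{-1}\) is a product of \(i_X\) minors or Pfaffians (determinants of the top-left and bottom-right blocks, and so on) whose \((p-1)\)-st power restricts to \(\tau\) modulo \(\mathfrak{m}^{[p]}\). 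I would modify this product by dividing out one factor to the \(p\)-th power, replacing \(f^{p-1}\) with \(f^{p-1}g/\text{(one linear factor)}^{p}\) in the appropriate degree. Concretely, I would take a product of \(p(i_X-1)+\dots\) minors chosen so that the total degree is \(i_X(p-1)-p\), and so that its leading monomial in the \(x_\beta\), reduced modulo \(x_\beta^p\), is exactly \(x_\alpha^{p-2}\prod_{\beta\ne\alpha}x_\beta^{p-1}\). The verification is a leading-term computation. One orders the roots by weight, observes that each minor has a distinguished monomial (its diagonal term), and checks that all other terms in the expansion either contain some \(x_\beta^p\) or have strictly smaller weight. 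Since the image of \(\res'\) is \(\mathbf{U}_1\)-stable, lower-weight garbage can then be removed by the argument of \parref{homogeneous-ampleness-lie-algebra}, or one works directly with the weight-\(\lambda-\alpha\) component.

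\textbf{Case analysis.} I would treat \(\Gr(k,n)\) first, where \(i_X=n\) and \(\tau\) arises from the staircase of minors of the \(k\times(n-k)\) matrix. Next I would handle \(\SGr\) and \(\OGr\), where \(A\) is symmetric or alternating; there the natural splitting sections involve squares of entries or Pfaffians, with \(i_X=2m-k+1\) and \(n-k-1\) respectively. Finally I would treat \(\mathrm{G}_2\) through its embedding in \(\Gr(2,7)\) and in the quadric \(Q^5\). The exceptional cases should appear exactly where the construction needs to divide by a coefficient such as \(2\) or \(3\). In the symplectic and odd orthogonal cases, the diagonal entries \(a_{ii}\) of a symmetric matrix contribute through \(\det\), with coefficients \(2\) in the off-diagonal cross terms, and the \(\mathrm{G}_2\) structure constants involve \(2\) and \(3\). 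Tracking where a constant is needed to isolate \(\partial_\alpha\tau\) for the short-root (or long-root) level is what should produce the listed exclusions.

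\textbf{Main obstacle.} I expect the main difficulty to be the non-simply-laced cases: the level of \(\mathfrak{u}\) containing roots whose \(x_\alpha\) enter the minors quadratically, namely the diagonal of the symmetric matrix for \(\SGr\) and the short roots for \(\OGr(m-1,2m+1)\) and \(\mathrm{G}_2\). There the leading coefficient of the candidate section at \(\partial_\alpha\tau\) is a product of small integers. I would first compute it in the smallest instances (\(\SGr(2,4)\), \(\OGr(1,5)\), and the \(\mathrm{G}_2\)-Grassmannian), checking that it is nonzero exactly when \(p\) avoids the excluded primes. I would then propagate the result to general \(k,m\) by restricting the product structure along Levi subgroups, reducing to rank-two subsystems.
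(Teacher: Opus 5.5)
Your skeleton is the paper's: the criterion \parref{homogeneous-ampleness-criterion}, the reduction via \parref{homogeneous-ampleness-u} to one generator per $\mathbf{L}$-constituent of each level, and sections built from minors or Pfaffians. However, the two steps that carry the content are missing or would fail as stated.

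First, the construction. As written, the recipe does not give a regular section: with $f$ of degree $i_X$, a quotient $f^{p-1}/\ell^{p}$ of the required degree is not regular, since $\ell$ divides $f^{p-1}$ only to order $p-1$. What works is to divide $f^{p-1}$ by $\ell_0^{p-1}x_\alpha$. Here $\ell_0$ cuts out the Schubert divisor $X\setminus\mathbf{U}$, and $x_\alpha$ is a root coordinate of \emph{indecomposable} weight occurring as a factor of $f$. Then, provided $f^{p-1}$ has jet $\tau$, $f^{p-1}/\ell_0^{p-1}$ is a splitting compatible with $X\setminus\mathbf{U}$. Since the $\lambda-\alpha$ weight space is one-dimensional, dividing once by $x_\alpha$ yields exactly $\pm\partial_\alpha\tau$. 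The real work is exhibiting such an $f$ and evaluating $f^{p-1}$ in $\kk[\mathbf{U}_1]$. That is what the rolling determinant \parref{homogeneous-rolling-det-computation} (with its successive-triangularity argument) and the rolling Pfaffian \parref{homogeneous-spinor-pfaffian} accomplish, and your proposal offers neither a candidate nor a computation.

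Second, your verification step misidentifies the difficulty. These sections are $\mathbf{T}$-weight vectors, so nothing in their jets has ``strictly smaller weight''. The real issue arises for decomposable $\alpha$: level $2$ in types C and B/D, and $\alpha_6$ in $\mathrm{G}_2$. There the $\lambda-\alpha$ weight space also contains the monomials $\partial_\beta\partial_\gamma\tau$ with $\beta+\gamma=\alpha$, so the jet is $\pm\partial_\alpha\tau+\sum c_{\beta\gamma}\partial_\beta\partial_\gamma\tau$. The paper disposes of these extra terms in two ways. In type C it uses a total-degree count. In types B/D and $\mathrm{G}_2$ it obtains $\partial_\beta\partial_\gamma\tau$ from $X_\beta\cdot\partial_\gamma\tau$ and $X_\gamma\cdot\partial_\beta\tau$, which needs $[X_\beta,X_\gamma]$ to be a \emph{nonzero} multiple of $X_\alpha$.

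Your replacement---compute small cases and ``propagate along Levi subgroups to rank-two subsystems''---has no mechanism behind it, and the case division is not rank-two data. Take $\OGr(k,2m+1)$ with $p=2$. The level-$1$ module $\mathrm{Std}_{\mathbf{GL}_k}\otimes\mathrm{Std}_{\mathbf{SO}_{2m+1-2k}}$ is reducible because of $\operatorname{rad}b_q$, so $\partial_{1,m+1}\tau$ needs its own rolling determinant; this is needed already for odd quadrics of dimension $\geq5$. For $k\geq2$, the relevant Chevalley constant is $\pm2$, so $[X_{1,m+1},X_{k,m+1}]=0$. One must therefore show directly that $\partial_{1,m+1}\partial_{k,m+1}\tau$ has coefficient zero. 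This holds because $x_{i,m+1}$ enters the equations only as $x_{i,m+1}^2$ or $2x_{i,m+1}x_{j,m+1}$. With these two ingredients the theorem holds for all $k\leq m-2$. What singles out $k=m-1$ is that its middle block contains only the radical column and the two $q$-paired columns, which no rank-two subsystem detects.

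Similarly, type C uses $p\geq3$ only to make $\Div^2\mathrm{Std}_{\mathbf{GL}_k}$ irreducible, not to invert a $2$ from a symmetric determinant. Moreover, $\SGr(1,2m)$ carries the same $\mathrm{C}_2$ constant $\pm2$ without being excluded. Finally, your test cases $\SGr(2,4)$ and $\OGr(1,5)$ are the same quadric threefold.
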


\subsectiondash{}\label{homogeneous-strategy}
The proof of \parref{homogeneous-theorem} occupies the remainder of this section.
To explain the strategy, coordinatize the canonical affine
open subscheme of \(X\) provided by the opposite unipotent radical
\(\mathbf{U}\) as in \parref{homogeneous-local-computations} 
as the big Schubert cell parameterizing isotropic \(k\)-dimensional subspaces
in an \(n\)-dimensional space given by the row span of the \(k \times n\)
matrix of indeterminates
\[
M \coloneqq
\begin{pmatrix}
x_{1,1} & \cdots & x_{1,k} & x_{1,k+1} & \cdots & x_{1,n-k} & 1 & \cdots & 0 \\
\vdots  & \ddots & \vdots  & \vdots    & \ddots & \vdots    & \vdots & \ddots & \vdots \\
x_{k,1} & \cdots & x_{k,k} & x_{k,k+1} & \cdots & x_{k,n-k} & 0 & \cdots & 1
\end{pmatrix}
\]
where, other than in type \(\mathrm{A}\), the \(x_{i,j}\) are subject to
equations which express the isotropicity condition. With the exception
of the \emph{spinor varieties} dealt with in \parref{homogeneous-spinor}, the
ample generator \(\sO_X(1) \in \Pic X\) is the restriction of the Pl\"ucker
line bundle, so the task prescribed by
\parref{homogeneous-ampleness-criterion} of constructing sections of
\(\omega_X^{\otimes 1-p} \otimes \sO_X(-p)\) realizing
the elements \(\partial_{i,j} \tau\) amounts to producing suitable polynomials
in the minors of \(M\). In the cases considered, the level
decomposition from \parref{homogeneous-level} of the Lie algebra
\(\mathfrak{u}\) splits it into at most \(2\) indecomposable representations
for \(\mathbf{L}\), and so we need only explicitly construct a small number of
splittings by \parref{homogeneous-ampleness-u}.

\subsectiondash{Rolling determinants}\label{homogeneous-rolling-det}
In most of the cases considered, the device with which we construct the
requisite sections is the following: Let \(s > 0\) be the Fano index of \(X\),
so that \(\omega_X^\vee \cong \sO_X(s)\). Form an  auxiliary \(k \times (s + k
- 2)\) matrix consisting of columns from \(M\) of the form
\[
N =
\begin{pmatrix}
0 & \cdots & 0 & y_{1,1} & \cdots & y_{1,s-1} \\
1 & \cdots & 0 & y_{2,1} & \cdots & y_{2,s-1} \\
\vdots & \ddots & \vdots & \vdots & \ddots & \vdots \\
0 & \cdots & 1 & y_{k,1} & \cdots & y_{k,s-1}
\end{pmatrix}
\]
where the initial block consists of a size \(k-1\) identity matrix shifted down
one row, followed by other columns of \(M\) for some \(y_{i,j} = x_{i,j_i}\).
Construct a section of
\(\omega_X^{\otimes 1-p} \otimes \sO_X(1-p) \cong \sO_X((p-1)(s-1))\) by taking the
\((p-1)\)-power \emph{rolling determinant} of \(N\),
\[
(\rolldet N)^{p-1} \coloneqq
\prod\nolimits_{\ell = 1}^{s-1} (\det N_{\ell,\ldots,\ell+k-1})^{p-1}
\]
where \(N_{\ell,\ldots,\ell+k-1}\) is the \(k \times k\)-submatrix of
\(N\) formed by the columns from the \(\ell\)-th to the \((\ell+k-1)\)-st.
With a suitable choice of \(N\), this, or a minor variation thereof, often
provides a Frobenius splitting of \(X\) compatible with the Schubert divisor
\(X \setminus \mathbf{U}\) that is additionally divisible by
\((\det N_{1,\ldots,k})^{p-1} = y_{1,1}^{p-1}\). Dividing out \(y_{1,1}\) once
produces a section of \(\omega_X^{\otimes 1-p} \otimes \sO_X(-p)\) whose
image under the restriction map \(\res'\) to \(\kk[\mathbf{U}_1]_{< \tau}\)
contains \(\pm\partial\tau/\partial y_{1,1}\) and maybe other terms, which may be
determined by weight considerations. We then use the action of \(\mathfrak{u}\)
to show that the extraneous terms also lie in the image of \(\res'\), at
which point we may conclude.

The purpose of the rolling determinant is the following:

\begin{Lemma}\label{homogeneous-rolling-det-computation}
As an element of
\(\kk[y_{1,1},\ldots,y_{k,s-1}]/(y_{1,1}^p,\ldots,y_{k,s-1}^p)\),
\[
(\rolldet N)^{p-1} =
\Big(\prod\nolimits_{j = 1}^{s-k-1} \prod\nolimits_{i = 1}^k y_{i,j}\Big)^{p-1}
\Big(\prod\nolimits_{j = s-k}^{s-1} \prod\nolimits_{i = 1}^{s-j} y_{i,j}\Big)^{p-1}.
\]
\end{Lemma}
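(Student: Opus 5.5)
The plan is a combinatorial expansion of the product. Writing out \(N\), the window \(N_{\ell,\ldots,\ell+k-1}\) for \(1 \leq \ell \leq k-1\) consists of the identity columns \(e_{\ell+1},\ldots,e_k\) together with the \(y\)-columns \(1,\ldots,\ell\). Expanding along the identity columns, its determinant is \(\pm\) the \(\ell\times\ell\) minor of \((y_{i,j})\) on rows \(1,\ldots,\ell\) and columns \(1,\ldots,\ell\). For \(k \leq \ell \leq s-1\), the window is the full \(k\times k\) minor on \(y\)-columns \(\ell-k+1,\ldots,\ell\). So each factor \((\det N_{\ell,\ldots,\ell+k-1})^{p-1}\) is a product of \(p-1\) copies of a minor \(D_\ell\) of \(Y=(y_{i,j})\). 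Expanding each copy as a sum over bijections (rows of \(D_\ell\)) \(\to\) (columns of \(D_\ell\)), every monomial of the product is indexed by a choice of \((s-1)(p-1)\) such bijections. In the truncated ring only those monomials survive in which every \(y_{i,j}\) is used at most \(p-1\) times.

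As a sanity check, the degrees match: \(\sum_\ell \deg D_\ell = \binom{k}{2} + (s-k)k\), and the claimed monomial has exponent \(p-1\) on exactly \(k(s-k-1) + \binom{k+1}{2}\) variables, which is the same number. The heart of the proof is to show that any surviving choice of bijections forces the monomial to be the claimed one, and moreover forces each individual bijection.

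I would do this by double counting, column by column from the left. Column \(j\) of \(Y\) lies in the windows \(\ell\) with \(j \leq \ell \leq \min(j+k-1,s-1)\), so it is used exactly \((p-1)\min(k,s-j)\) times. For \(j \leq s-k-1\) this is \((p-1)k\), while the column has only \(k\) entries each usable at most \(p-1\) times. Hence every \(y_{i,j}\) in such a column has exponent exactly \(p-1\), giving the first factor.

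For the last \(k\) columns \(j = s-k,\ldots,s-1\), column \(j\) is used \((p-1)(s-j)\) times, and I must show the uses are exactly rows \(1,\ldots,s-j\), each \(p-1\) times. Here I would count by rows instead, inducting on the column from the right (or on the row from the bottom). Row \(i\) occurs in \(y\)-parts only of windows \(\ell \geq i\), so the total row budget across the \(y\)-variables of row \(i\) is fixed. The first \(s-k-1\) columns are already saturated, which leaves a triangular budget on the remaining block; this forces \(y_{i,j}=0\) in exponent for \(i > s-j\) and exponent \(p-1\) otherwise.

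I expect this triangular bookkeeping to be the main obstacle. In particular one must check that the small windows \(\ell<k\), whose rows are restricted to \(1,\ldots,\ell\), interact correctly with the budget for the last columns. I would first try a Hall-type greedy argument: peel off row \(k\), which appears only in full windows \(\ell\geq k\), then row \(k-1\), and so on.

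Finally, the coefficient. The same counting should show that within each window every one of the \(p-1\) copies must use the same bijection, namely the one matching rows to columns in the unique way compatible with the exponent pattern, so that the monomial arises from exactly one term. Its coefficient is then a product of signs raised to the power \(p-1\). This equals \(1\) when \(p\) is odd since \(p-1\) is even, and the sign is irrelevant when \(p=2\). This yields the claimed equality in \(\kk[y_{1,1},\ldots,y_{k,s-1}]/(y_{1,1}^p,\ldots,y_{k,s-1}^p)\).

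If uniqueness of the bijections turns out to fail, so that several copies contribute, I would fall back on computing the coefficient as a product of multinomial coefficients \(\binom{p-1}{a_1,\ldots,a_r}\). These are nonzero mod \(p\) but are not obviously \(1\), so that route would need an additional check.
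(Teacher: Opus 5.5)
\paragraph{Review.} Your margin count does fix the exponent pattern. The column totals \((p-1)\min(k,s-j)\) saturate columns \(1,\dots,s-k\). Row \(i\) is then left with budget \((p-1)(k-i)\) in the last \(k-1\) columns. Alternating between row and column margins forces the staircase: row \(k\) (budget \(0\)), then column \(s-k+1\) (total \((p-1)(k-1)\), spread over rows \(1,\dots,k-1\)), then row \(k-1\), column \(s-k+2\), and so on.

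\paragraph{The gap: the coefficient.} The lemma asserts the coefficient is \(1\), and the later constructions need at least a nonzero multiple of \(\tau\). Row and column margins only determine the aggregate exponent matrix of a surviving monomial. They do not see how that matrix is split among the \((s-1)(p-1)\) individual bijections. So ``the same counting'' does not show that the staircase monomial comes from a single term; that claim is asserted, not proved. Your fallback does not close the gap either. If several assignments contributed, the coefficient would be a signed sum of products of multinomial coefficients. Each summand is nonzero mod \(p\), but the sum could vanish.

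\paragraph{The missing idea.} You need a left-to-right forcing argument over the windows, which is what the paper does. Window \(1\) is the \(1\times 1\) minor \(y_{1,1}\). Suppose the first \(\ell-1\) factors multiply to \(\prod_{i+j\le \ell} y_{i,j}^{p-1}\) in the truncated ring; signs only enter to the power \(p-1\). Then every \(y_{i,j}\) with \(i+j\le\ell\) already has exponent \(p-1\). So you may set these variables to \(0\) in \(D_\ell\) without changing the product.

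The specialized minor has rows \(1,\dots,\min(\ell,k)\) and columns \(\max(1,\ell-k+1),\dots,\ell\). It is zero in the upper-left triangle above its antidiagonal. Hence exactly one permutation survives, the one on \(i+j=\ell+1\), and the minor equals \(\pm\prod_{i+j=\ell+1}y_{i,j}\).

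Inducting up to \(\ell=s-1\) gives the staircase \(i+j\le s\) and the coefficient \(\prod_\ell(\pm1)^{p-1}=1\) at the same time. It is this window-by-window exhaustion, not the global margins, that controls the individual bijections. Once you have it, the double counting is no longer needed.
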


This can be established with an elementary inductive argument. Rather than
spell out the proof, we illustrate the key point by expanding the first three
factors in the rolling determinant. Thanks to the initial shifted identity
matrix block, this is the left side of
\[
y_{1,1}^{p-1}
\begin{vmatrix}
y_{1,1} & y_{1,2} \\ y_{2,1} & y_{2,2}
\end{vmatrix}^{p-1}
\begin{vmatrix}
y_{1,1} & y_{1,2} & y_{1,3} \\
y_{2,1} & y_{2,2} & y_{2,3} \\
y_{3,1} & y_{3,2} & y_{3,3}
\end{vmatrix}^{p-1}
=
y_{1,1}^{p-1}
\begin{vmatrix}
0 & y_{1,2} \\ y_{2,1} & y_{2,2}
\end{vmatrix}^{p-1}
\begin{vmatrix}
0 & y_{1,2} & y_{1,3} \\
y_{2,1} & y_{2,2} & y_{2,3} \\
y_{3,1} & y_{3,2} & y_{3,3}
\end{vmatrix}^{p-1}
\]
Since \(y_{i,j}^p = 0\), the initial \(y_{1,1}^{p-1}\) implies that subsequent
\(y_{1,1}\) do not contribute, and so we may set all later occurrences of
\(y_{1,1}\) to \(0\), as in the right side above. The second matrix is now
triangular, so its determinant is the product of its anti-diagonal terms
\(y_{2,1}\) and \(y_{1,2}\). As before, we may set these variables to
\(0\) in the third determinant, yielding again a triangular matrix, from which
it follows that the initial product of three determinants yields the element
\[
(y_{1,1} y_{2,1} y_{3,1} y_{1,2} y_{2,2}y_{1,3})^{p-1}
\in \kk[y_{1,1},\ldots,y_{k,s-1}]/(y_{1,1}^p,\ldots,y_{k,s-1}^p).
\]
Iterating this argument, each successive term
may be reduced to a determinant of a triangular matrix, from which the
lemma follows easily.

\subsectiondash{Grassmannian}\label{homogeneous-A}
As the first case of \parref{homogeneous-theorem}, consider \(X = \Gr(k,n)\) with
\(1 \leq k \leq n-1\). Take \(\mathbf{G} = \mathbf{GL}_n\), \(\mathbf{T}\)
the subgroup of diagonal matrices, \(\mathbf{P}\) the parabolic subgroup of
matrices with vanishing lower left \((n-k) \times k\) block, and Levi factor
\(\mathbf{L}\) the block diagonal subgroup
\(\mathbf{GL}_k \times \mathbf{GL}_{n-k}\). The action of \(\mathbf{L}\) on
\(\mathbf{U}\) may be described in terms of the matrix \(M\) from
\parref{homogeneous-strategy} as that induced by the simultaneous
action the left by \(\mathbf{GL}_k\) via row operations and on the right by
\(\mathbf{GL}_{n-k}\) via column operations on the initial \(n-k\) columns.
This implies that the adjoint representation on the Lie algebra of
\(\mathbf{U}\) is irreducible, isomorphic to the tensor product
\(\mathfrak{u} \cong \mathrm{Std}_{\mathbf{GL}_k} \otimes \mathrm{Std}_{\mathbf{GL}_{n-k}}\)
of the standard representations of \(\mathbf{GL}_k\) and \(\mathbf{GL}_{n-k}\).
By \parref{homogeneous-ampleness-u}, it suffices to construct the element
\(\partial_{1,1} \tau\).

Since \(\omega_X^{-1} \cong \sO_X(n)\) where \(\sO_X(1)\) is the Pl\"ucker,
the strategy from \parref{homogeneous-strategy} is to produce a
polynomial of degree \((p-1)(n-1)\) in the minors of \(M\) which provides a
Frobenius splitting divisible by the minor evaluating to \(x_{1,1}\).
As in \parref{homogeneous-rolling-det}, consider the
\(k \times (n+k-2)\) matrix formed from the columns of \(M\)
\[
A \coloneqq
\begin{pmatrix}
0 & \cdots & 0 & 0 & x_{1,1} & \cdots & x_{1,n-k} & 1 & 0 & \cdots & 0 \\
1 & \cdots & 0 & 0 & x_{2,1} & \cdots & x_{2,n-k} & 0 & 1 & \cdots & 0 \\
\vdots & \ddots & \vdots & \vdots & \vdots & \ddots & \vdots & \vdots & \vdots & \ddots & \vdots \\
0 & \cdots & 1 & 0 & x_{k-1,1} & \cdots & x_{k-1,n-k} & 0 & 0 & \cdots & 1 \\
0 & \cdots & 0 & 1 & x_{k, 1} & \cdots & x_{k,n-k} & 0 & 0 & \cdots & 0
\end{pmatrix}
\]
consisting of a size \(k-1\) identity matrix shifted down one,
followed by the \(k \times (n-k)\) block of coordinates of \(\mathbf{U}\),
followed by another size \(k-1\) identity, this time shifted up one. A slight
modification of \parref{homogeneous-rolling-det-computation} accounting for the
final identity matrix gives
\[
(\rolldet A)^{p-1} \coloneqq
\prod\nolimits_{\ell = 1}^{n-1} (\det A_{\ell,\ldots,\ell+k-1})^{p-1}
= \Big(\prod\nolimits_{j = 1}^{n - k}\prod\nolimits_{i = 1}^k  x_{i,j}\Big)^{p-1} \in
\kk[\mathbf{U}_1].
\]
Since the weight of \(x_{1,1}\) is indecomposable amongst the roots of
\(\mathbf{U}^+\), the weight space in \(\kk[\mathbf{U}_1]\) containing
\(\partial_{1,1}\tau\) is \(1\)-dimensional.  Dividing the displayed element by
\(\det A_{1,\ldots,k} = \pm x_{1,1}\) thus produces a section of
\(\omega_X^{\otimes 1-p} \otimes \sO_X(-p)\) restricting to
\(\pm\partial_{1,1}\tau\), completing the proof of \parref{homogeneous-theorem}
in type \(\mathrm{A}\).
\qed

\subsectiondash{Example}\label{homogeneous-A-Gr2}
Beyond the classical case of \(\PP^{n-1}\), the case \(X = \Gr(2,n)\) may be
alternatively seen using work of Raedschelders--\v{S}penko--Van den Bergh, in
which they explicitly compute the decomposition of \(F_*\sO_X\). Namely,
writing \(\mathcal{S}\) and \(\mathcal{Q}\) for the tautological sub- and
quotient bundles of ranks \(2\) and \(n-2\), respectively, they show in
\cite[Theorem 16.5 and Corollary 16.6]{RSSvB} that
\[
\mathcal{E}_X \cong
\Big(\bigoplus\nolimits_{k > 0} \sO_X(k)^{\oplus a_k}\Big) \oplus
\Big(\bigoplus\nolimits_{k > 0}\bigoplus\nolimits_{j = 1}^{n-3} \wedge^j \mathcal{Q}(k)^{\oplus b_{j,k}}\Big) \oplus
\Big(\bigoplus\nolimits_{k > 0}\bigoplus\nolimits_{j = 1}^{n-3} \mathcal{F}_j(j+k)^{\oplus c_{j,k}}\Big)
\]
for some multiplicities \(a_k\), \(b_{j,k}\), and \(c_{j,k}\). The
vector bundles \(\mathcal{F}_j\) are given by
\[
\begin{dcases*}
\mathcal{F}_j \cong \Sym^j(\mathcal{S}) & if \(0 \leq j \leq p-1\), \\
0 \to \Sym^{2p-2-j}(\mathcal{S}) \to \mathcal{F}_j \to \Sym^j(\mathcal{S}) \to 0 & if \(p \leq j \leq 2p-2\), and \\
\mathcal{F}_j \cong \mathcal{F}_{j_0} \otimes F^*\mathcal{F}_{j_1} \otimes \cdots \otimes F^{e,*}\mathcal{F}_{j_e} &
if \(j = j_0 + j_1p + \cdots + j_ep^e\)
\end{dcases*}
\]
where the extension is non-split in the second case, and
\(p-1 \leq j_i \leq 2p-2\) for \(i < e\) and \(0 \leq j_e \leq p-1\) in the
third case---note that our conventions are dual to those in \cite[\S4.3.1]{RSSvB}.
Standard arguments, using the wedge product isomorphism
\(\mathcal{S} \cong \mathcal{S}^\vee(-1)\) for instance, then show that all
terms appearing in \(\mathcal{E}_X(-1)\) are globally generated.
\qed

\subsectiondash{Symplectic Grassmannian}\label{homogeneous-C}
Next, consider \(X = \SGr(k,n)\) with \(n = 2m \geq 4\) and
\(2 \leq k \leq m\); we omit the case \(k = 1\) since then \(\SGr(1,n) \cong \PP^{n-1}\)
and this has been dealt with in \parref{homogeneous-A}.  Take
\(\mathbf{G} = \mathbf{Sp}_{2m}\) to be the symplectic group of matrices in
\(\mathbf{SL}_{2m}\) preserving the symplectic
form
\[
\omega \coloneqq
\sum\nolimits_{s = 1}^m \mathrm{d}x_s \wedge \mathrm{d}x_{2m+1-s}
= \mathrm{d}x_1 \wedge \mathrm{d}x_{2m} +
\mathrm{d}x_2 \wedge \mathrm{d}x_{2m-1} +
\cdots +
\mathrm{d}x_m \wedge \mathrm{d}x_{m+1}.
\]
With this choice, we may take
\(\mathbf{T}\) to be the subgroup of diagonal matrices, and \(\mathbf{P}\) the
parabolic subgroup preserving the two-step symplectic flag consisting of the
subspace spanned by the first \(k\) basis vectors together with its
\(\omega\)-orthogonal space. The variables \(x_{i,j}\) in the matrix \(M\) from
\parref{homogeneous-strategy} on the big Schubert cell of \(X\) satisfy the
following equations, which expresses isotropicity with respect to \(\omega\):
\[
x_{j,k+1-i} = 
x_{i,k+1-j} + \sum\nolimits_{s = k+1}^m (x_{i,s} x_{j,2m+1-s} - x_{i,2m+1-s} x_{j,s})
\;\;\text{for each \(1 \leq i < j \leq k\)}.
\]
We use these equations to eliminate the variables \(x_{j,k+1-i}\) strictly
below the anti-diagonal in \(M_{1,\ldots,k}\), so that the root variables of
\(\mathbf{U}\) are those on and above the anti-diagonal and also the variables
appearing in the central \(k \times (2m-2k)\) block.
To first order, \(M_{1,\ldots,k}\) is a matrix symmetric along its
anti-diagonal; for instance \(x_{k,k} = x_{1,1} + \mathfrak{m}^2\)
where \(\mathfrak{m}\) is the maximal ideal generated by all the variables.
Globally, \(X\) is cut out of \(\Gr(k,2m)\) by the regular section of the second
wedge power \(\wedge^2\mathcal{S}^\vee\) of the dual of the tautological
rank \(k\) subbundle, so
\[
\omega_X^{-1} \cong
\omega_{\Gr(k,2m)}^{-1}\otimes \det\wedge^2\mathcal{S}\rvert_X \cong
\sO_X(2m-k+1)
\]
where \(\sO_X(1)\) is the Pl\"ucker line bundle.

A Levi factor \(\mathbf{L}\) is isomorphic to
\(\mathbf{GL}_k \times \mathbf{Sp}_{2m-2k}\). Considering its action on \(\mathbf{U}\)
shows that the level decomposition of the adjoint representation on
\(\mathfrak{u}\) has two steps with
\[
\mathfrak{u} \cong
\mathrm{Std}_{\mathbf{GL}_k} \otimes \mathrm{Std}_{\mathbf{Sp}_{2m-2k}} \oplus
\Div^2\mathrm{Std}_{\mathbf{GL}_k}
\]
where level \(1\) is the tensor product of the standard representations
arising from the simultaneous left action of \(\mathbf{GL}_k\) by row operations
and right action of \(\mathbf{Sp}_{2m-2k}\) by column operations on the middle
\(k \times (2m-2k)\) block; and level \(2\) is the symmetric quadratic tensors
of the standard representation of \(\mathbf{GL}_k\) corresponding to its
action on the anti-diagonally symmetric \(M_{1,\ldots,k}\).
When \(p \geq 3\), both summands are irreducible representations for
\(\mathbf{L}\), and so by \parref{homogeneous-ampleness-u}, it suffices to
construct the elements \(\partial_{1,k+1}\tau\) and \(\partial_{1,1}\tau\)
as sections of \(\sO_X((p-1)(2m-k)-1)\).

For \(\partial_{1,k+1}\tau\), \parref{homogeneous-rolling-det-computation}
shows that the \((p-1)\)-power rolling determinant of the
\(k \times (2m-1)\) matrix
\[
C \coloneqq
\begin{pmatrix}
0 & \cdots & 0 &  x_{1,k+1} & \cdots & x_{1,2m-k} & x_{1,1} & \cdots & x_{1,k} \\
1 & \cdots & 0 &  x_{2,k+1} & \cdots & x_{2,2m-k} & x_{2,1} & \cdots & x_{2,k} \\
\vdots & \ddots & \vdots & \vdots & \ddots & \vdots & \vdots & \ddots & \vdots \\
0 & \cdots & 1 &  x_{k,k+1} & \cdots & x_{k,2m-k} & x_{k,1} & \cdots & x_{k,k}
\end{pmatrix}
\]
provides an expression for \(\tau\) divisible by
\(\det C_{1,\ldots,k} = \pm x_{1,k+1}\). Arguing via indecomposability of the
weight of \(x_{1,k+1}\) as in \parref{homogeneous-A} shows that the \((p-1)\)-power
rolling determinant of \(C\) divided by this factor produces a suitable
expression for \(\partial_{1,k+1}\tau\).

For \(\partial_{1,1}\tau\), let \(C'\) be the \(k \times (2m-1)\) matrix obtained from
\(C\) by moving the \(x_{i,1}\) column to the \(k\)-th position. Since the
variables \(x_{j,k+1-i}\) with \(1 \leq i < j \leq k\) that lie strictly below
the anti-diagonal of \(M_{1,\ldots,k}\) still do not appear in the product
given by applying \parref{homogeneous-rolling-det-computation} to \(C'\), the
\((p-1)\)-power rolling determinant of \(C'\) still provides an expression for
\(\tau\) divisible by \(\det C_{1,\ldots,k}' = \pm x_{1,1}\). However,
\(x_{1,1}\) has the same weight as \(x_{1,j}x_{k,2m+1-j}\)
for each \(k < j < 2m-k\), and so
\(\gamma' \coloneqq (\rolldet C')^{p-1}/\det C_{1,\ldots,k}'\) may contain
terms of the form \(\partial_{1,j}\partial_{k,2m+1-j}\tau\) in addition
to \(\partial_{1,1}\tau\). But \(\gamma'\), being a product of determinants, may
be seen to be a homogeneous polynomial of total degree \((p-1)\dim X - 1\) in
the matrix entries of \(C\). Thus every term in \(\gamma'\) has total degree
at least \((p-1)\dim X - 1\) in the root coordinates of \(\kk[\mathbf{U}_1]\).
This forces \(\gamma' = \pm\partial_{1,1}\tau\), completing the proof of
\parref{homogeneous-theorem} in type \(\mathrm{C}\).
\qed

\subsectiondash{Orthogonal Grassmannian}\label{homogeneous-BD}
Let \(X = \OGr(k,n)\) with \(n \geq 5\) and either
\(1 \leq k < \lfloor\frac{n-1}{2}\rfloor\) or
\(k = \lfloor\frac{n}{2}\rfloor\). Take \(\mathbf{G} = \mathbf{SO}_n\) to be
the subgroup of \(\mathbf{SL}_n\) preserving the quadratic form
\[
q \coloneqq
\begin{dcases*}
x_1x_{2m+1} + x_2x_{2m} + \cdots + x_m x_{m+2} + x_{m+1}^2 & if \(n = 2m+1\), and \\
x_1x_{2m} + x_2x_{2m-1} + \cdots + x_m x_{m+1} & if \(n = 2m\).
\end{dcases*}
\]
With this choice, we may take \(\mathbf{T}\) to be the subgroup of diagonal
matrices, and \(\mathbf{P}\) the parabolic subgroup preserving the two-step
orthogonal flag consisting of the subspace spanned by the first \(k\) basis vectors
together with its orthogonal space with respect to the polar form associated
to \(q\). The variables \(x_{i,j}\) of \(M\) from \parref{homogeneous-strategy}
on the big Schubert cell of \(X\) satisfy two sets of equations corresponding
to isotropicity with respect to \(q\). The first set expresses the
\(x_{i,k+1-i}\) for \(1 \leq i \leq k\) on the anti-diagonal of
\(M_{1,\ldots,k}\) as the quadratic form applied to the coordinates in the
middle \(k \times (n-2k)\) block:
\[
\begin{dcases*}
x_{i,k+1-i} = -x_{i,k+1} x_{i,n-k} - \cdots - x_{i,m} x_{i,m+2} - x_{i,m+1}^2 & if \(n = 2m+1\), and  \\
x_{i,k+1-i} = -x_{i,k+1} x_{i,n-k} - \cdots - x_{i,m} x_{i,m+1} & if \(n = 2m\).
\end{dcases*}
\]
The second set of equations shows that \(M_{1,\ldots,k}\) is, to first-order,
anti-symmetric along the anti-diagonal: that is, for \(1 \leq i < j \leq k\),
\[
\begin{dcases*}
x_{j,k+1-i} = -x_{i,k+1-j} 
-x_{i,k+1} x_{j,n-k} - \cdots - x_{i,m} x_{j,m+2} - 2x_{i,m+1}x_{j,m+1} & if \(n = 2m+1\), and \\
x_{j,k+1-i} = -x_{i,k+1-j} -
x_{i,k+1} x_{j,n-k} - \cdots - x_{i,m} x_{j,m+1} & if \(n = 2m\).
\end{dcases*}
\]
We use these equations to eliminate the variables on and below the anti-diagonal
of \(M_{1,\ldots,k}\), so that the root coordinates are given by those above
the anti-diagonal of \(M_{1,\ldots,k}\) and the variables in the central
\(k \times (n-2k)\) block of \(M\). Globally, \(X\) is cut out of \(\Gr(k,n)\)
by the regular section induced by \(q\) of the second symmetric power
\(\Sym^2\mathcal{S}^\vee\) of the tautological rank \(k\) subbundle, so that
\[
\omega_X^{-1}
\cong \omega_{\Gr(k,n)}^{-1} \otimes \det \Sym^2\mathcal{S} \rvert_X
\cong \mathcal{L}^{\otimes n-k-1}
\]
where \(\mathcal{L}\) is the restriction of the Pl\"ucker line bundle onto \(X\).
If \(k < m\), then \(\mathcal{L} = \sO_X(1)\) is the ample generator of \(\Pic X\)
and otherwise \(\mathcal{L} = \sO_X(2)\) in the case \(k = m\): see
\parref{homogeneous-spinor} for details.

\subsectiondash{}\label{homogeneous-BD-submaximal}
Consider first the case where \(1 \leq k < \lfloor\frac{n-1}{2}\rfloor\); in
terms of Dynkin diagrams, this means that the parabolic \(\mathbf{P}\) is not
one of the spinor nodes on the right. In this case, \(\sO_X(1)\) is the
Pl\"ucker line bundle and a Levi factor \(\mathbf{L}\) is isomorphic to
\(\mathbf{GL}_k \times \mathbf{SO}_{n-2k}\). Computations analogous to the
symplectic case shows that the level decomposition of the Lie algebra of
\(\mathbf{U}\) is
\[
\mathfrak{u} \cong
\mathrm{Std}_{\mathbf{GL}_k} \otimes \mathrm{Std}_{\mathbf{SO}_{n-2k}} \oplus
\wedge^2 \mathrm{Std}_{\mathbf{GL}_k}
\]
where level \(1\) is the tensor product of the standard representations arising
as the action on the central \(k \times (n-2k)\) block of \(M\)
via row operations by \(\mathbf{GL}_k\) and column operations by
\(\mathbf{SO}_{n-2k}\); and level \(2\) comes from the
action of \(\mathbf{GL}_k\) on the anti-diagonally anti-symmetric matrix
\(M_{1,\ldots,k}\). The wedge square is always irreducible, whereas the
tensor product of standard representations is irreducible except in the case
the characteristic \(p = 2\) and \(n = 2m+1\) is odd; we treat this exceptional
case in \parref{homogeneous-BD-exceptional}. In the remaining cases, it
suffices to construct sections of \(\sO_X((p-1)(n-k-2)-1)\) realizing the
elements \(\partial_{1,k+2}\tau\) and \(\partial_{1,1}\tau\). We proceed in three steps:

\textbf{Step 1.}
To construct \(\partial_{1,k+2}\tau\), consider the \(k \times (n-3)\) matrix
\[
B \coloneqq
\begin{pmatrix}
0 & \cdots & 0 & x_{1,k+2} & \cdots &  x_{1,n-k-1} & x_{1,1} & \cdots & x_{1,k} \\
1 & \cdots & 0 & x_{2,k+2} & \cdots &  x_{2,n-k-1} & x_{2,1} & \cdots & x_{2,k} \\
\vdots & \ddots & \vdots & \vdots & \ddots &  \vdots & \vdots & \ddots & \vdots \\
0 & \cdots & 1 & x_{k,k+2} & \cdots &  x_{k,n-k-1} & x_{k,1} & \cdots & x_{k,k}
\end{pmatrix}
\]
where the pair of columns with the variables \(x_{i,k+1}\) and
\(x_{i,n-k}\) do not appear; the essential property of the missing columns is
that they are matched by the quadratic form \(q\). Applying
\parref{homogeneous-rolling-det-computation} gives
\[
(\rolldet B)^{p-1} =
\Big(\prod\nolimits_{j = k+2}^{n-k-1}\prod\nolimits_{i = 1}^k x_{i,j}\Big)^{p-1}
\Big(\prod\nolimits_{j = 1}^k \prod\nolimits_{i = 1}^{k+1-j}  x_{i,j}\Big)^{p-1}
\in \kk[\mathbf{U}_1]
\]
which is the \((p-1)\)-power of the product of all the variables in \(M\)
excluding the \(x_{i,k+1}\), \(x_{i,n-k}\), and those strictly below
the anti-diagonal in \(M_{1,\ldots,k}\). Consider the product of the
anti-diagonal matrix entries \(x_{i,k+1-i}\) in the second product: the
first set of equations of \(X\) in \parref{homogeneous-BD} reads
\[
x_{i,k+1-i} = 
-x_{i,k+1}x_{i,n-k} - q'_i
\;\;\text{for each \(i = 1,\ldots,k\)}
\]
where \(q'_i\) is a quadratic form in the variables \(x_{i,j}\) with \(k+2 \leq
j \leq n-k-1\). Since each variable in \(q'_i\) already appears in the rolling
determinant with exponent \(p-1\), each \(x_{i,k+1-i}\) only contributes
\(x_{i,k+1}x_{i,n-k}\) into the product. Therefore
\begin{multline*}
\Big(\prod\nolimits_{j = k+2}^{n-k-1}\prod\nolimits_{i = 1}^k  x_{i,j}\Big)^{p-1}
\Big(\prod\nolimits_{j = 1}^{k - 1}\prod\nolimits_{i = 1}^{k-j}  x_{i,j}\Big)^{p-1}
\Big(\prod\nolimits_{i = 1}^k x_{i,k+1-i}\Big)^{p-1} \\ =
\Big(\prod\nolimits_{j = k+2}^{n-k-1}\prod\nolimits_{i = 1}^k  x_{i,j}\Big)^{p-1}
\Big(\prod\nolimits_{j = 1}^{k - 1}\prod\nolimits_{i = 1}^{k-j}  x_{i,j}\Big)^{p-1}
\Big(\prod\nolimits_{i = 1}^k x_{i,k+1}x_{i,n-k}\Big)^{p-1}
\end{multline*}
and this is precisely the trace element \(\tau\) in \(\kk[\mathbf{U}_1]\).
Since the weight of \(x_{1,k+2}\) is indecomposable in \(\Phi^+_{\mathbf{P}}\),
dividing this expression for \(\tau\) by \(\det B_{1,\ldots,k} = \pm x_{1,k+2}\)
provides a suitable expression for \(\partial_{1,k+2}\tau\). Irreducibility
of the \(\mathbf{L}\)-representation
\[
\kk[\mathbf{U}_1]_{N-1} \cong
\mathfrak{u}_1 \cdot \tau \cong
\mathrm{Std}_{\mathbf{GL}_k} \otimes \mathrm{Std}_{\mathbf{SO}_{n-2k}}
\]
now implies that each of the elements \(\partial_{i,j}\tau\) for
\(1 \leq i \leq k\) and \(k+2 \leq j \leq n-k-1\) may be realized via a
section of \(\omega_X^{\otimes 1-p} \otimes \sO_X(-p)\).

\textbf{Step 2.}
We now show that the elements \(\partial_{1,j}\partial_{k,n+1-j}\tau\) with
\(k + 1 \leq j \leq n-k\) may also be realized by sections of
\(\omega_X^{\otimes 1-p} \otimes \sO_X(-p)\). As discussed in
\parref{homogeneous-ampleness-lie-algebra}, the restriction map \(\res'\) is
equivariant for the action of the Lie algebra \(\mathfrak{u}\), so it suffices
to show that the \(\partial_{1,j}\partial_{k,n+1-j}\tau\) may be obtained from
the \(\partial_{i,j}\tau\) constructed in step 1 via the action of \(\mathfrak{u}\).
For each \(k+1 \leq j \leq n-k\), consider the Lie algebra elements \(X_{1,j}\)
and \(X_{k,n+1-j}\) dual to the root coordinates \(x_{1,j}\) and
\(x_{k,n+1-j}\), respectively.  Weight considerations show that
they act on \(\kk[\mathbf{U}_1]\) as derivations of the form
\[
X_{1,j} = \partial_{1,j} + c x_{k,n+1-j}\partial_{1,1} + D
\;\;\text{and}\;\;
X_{k,n+1-j} = \partial_{k,n+1-j} + c' x_{1,j} \partial_{1,1} + D'
\]
for scalars \(c,c' \in \kk\), and derivations \(D\) and \(D'\)
which do not involve any of \(\partial_{1,j}\),
\(\partial_{k,n+1-j}\), and \(\partial_{1,1}\). Compute the commutator
\([X_{1,j}, X_{k,n+1-j}]\) in two ways: On the one hand, a direct
computation with the derivations gives the first equality in
\[
(c'-c) X_{1,1} =
[X_{1,j},X_{k,n+1-j}] =
c'' X_{1,1}
\]
where \(X_{1,1} = \partial_{1,1}\) is the derivation dual to the root
subgroup with coordinate \(x_{1,1}\). On the other hand, \(\mathfrak{u}\)
is a subalgebra of \(\mathfrak{so}_n\), so the commutator must be of the
form \(c''X_{1,1}\) where the coefficient is a multiple of Chevalley's
structure constant indexed by the weights of \(x_{1,j}\) and \(x_{k,n+1-j}\).
Consulting \cite[Ch. VIII \S2.4, Prop. 7]{Bourbaki:Lie-7-9}, for example,
shows that all structure constants are \(\pm 1\) except in the case
\(n = 2m+1\) and \(j = n-m = m+1\), wherein the structure constants between
\(x_{1,m+1}\) and \(x_{k,m+1}\) is \(\pm 2\). Since we treat the odd \(n\)
case in characteristic \(p = 2\) separately in
\parref{homogeneous-BD-exceptional}, we have \(c'' \neq 0\). A
direct calculation now shows that
\[
\partial_{1,j}\partial_{k,n+1-j}\tau =
\begin{dcases*}
X_{1,j} \cdot \partial_{k,n+1-j}\tau & if \(c = 0\), \\
X_{k,n+1-j} \cdot \partial_{1,j}\tau & if \(c' = 0\), and \\
(c'X_{1,j} \cdot \partial_{k,n+1,j}\tau - cX_{k,n+1-j} \cdot \partial_{1,j}\tau)/c'' & if \(cc' \neq 0\).
\end{dcases*}
\]
In all cases, this shows that \(\partial_{i,j}\partial_{k,n+1-j}\tau\) is
in the image of \(\res'\).

\textbf{Step 3.}
Finally, to construct \(\partial_{1,1}\tau\), let \(B'\) be the
\(k \times (n-3)\) matrix obtained from \(B\) by moving the \(x_{i,1}\) column
to the \(k\)-th position. Computing as in step 1 together with weight
considerations give
\[
\frac{(\rolldet B')^{p-1}}{\det B'_{1,\ldots,k}} =
\pm\partial_{1,1}\tau +
\sum_{j = k+1}^{n-k} c_j \cdot \partial_{1,j}\partial_{k,n+1-j}\tau.
\]
for some scalars \(c_1,\ldots,c_{n-k} \in \kk\) which are typically not
all zero. Step 2 shows that each term in the second summand already
lies in the image of \(\res'\), whence this implies \(\partial_{1,1}\tau\)
does too.

\subsectiondash{}\label{homogeneous-BD-exceptional}
To complete the proof of \parref{homogeneous-theorem} for orthogonal
Grassmannian other than the spinor varieties, it remains to consider the odd
orthogonal Grassmannian \(X = \OGr(k,2m+1)\) with \(1 \leq k \leq m-2\) in
characteristic \(p = 2\). Each of the steps in
\parref{homogeneous-BD-submaximal} requires a slight modification:

\textbf{Step 1\('\).}
The standard representation \(\mathrm{Std}_{\mathbf{SO}_{2m+1-2k}}\) of the
odd orthogonal groups is not simple: \(\mathbf{SO}_{2m+1-2k}\) preserves
the \(1\)-dimensional kernel of the polar form \(b_q\) associated with the
quadratic form \(q\); see \parref{exceptions-symplectic} for details. Thus
there is a short exact sequence
\[
0 \to
\mathrm{Std}_{\mathbf{GL}_k} \otimes \operatorname{rad} b_q \to
\mathrm{Std}_{\mathbf{GL}_k} \otimes \mathrm{Std}_{\mathbf{SO}_{2m+1-2k}} \to
\mathrm{Std}_{\mathbf{GL}_k} \otimes\mathrm{Std}_{\mathbf{SO}_{2m+1-2k}}/\operatorname{rad} b_q \to
0
\]
where the bilinear radical on the left is the \(1\)-dimensional trivial
representation for \(\mathbf{SO}_{2m+1-2k}\) spanned by the vector dual to
the coordinate \(x_{m+1}\). This implies that when \(k \neq m - 1\), the
element \(\partial_{1,k+2}\tau\) realized in step 1 of
\parref{homogeneous-BD-submaximal} lies in the quotient above; a
section of \(\omega_X^{\otimes 1-p} \otimes \sO_X(-p)\) realizing
\(\partial_{1,m+1}\tau\) in the subrepresentation may then be constructed as
usual from the \((p-1)\)-power rolling determinant of the matrix obtained from
\(B\) by moving the \(x_{i,m+1}\) column to the \(k\)-th position.
Since the sub- and quotient representations are individually simple for
\(\mathbf{L}\), this produces each of the elements \(\partial_{i,j}\tau\) for
\(1 \leq i \leq k\) and \(k+1 \leq j \leq n-k\).

\textbf{Step 2\('\).}
The argument in step 2 of \parref{homogeneous-BD-submaximal} still shows that
the elements \(\partial_{1,j}\partial_{k,n+1-j}\tau\) lie in the image
of \(\res'\) for \(k+1 \leq j \leq n-k\), except maybe for
\(\partial_{1,m+1}\partial_{k,m+1}\tau\). In fact, a more
careful calculation shows that, although \(c'' = 0\) in this last
case, \(cc' = 0\) and so the argument goes through; rather than
explaining this, we instead do a more careful calculation in the next step.

\textbf{Step 3\('\).}
To obtain \(\partial_{1,1}\tau\) in this case, the divided rolling determinant
nonetheless gives
\[
\beta' \coloneqq
\prod\nolimits_{\ell = 2}^{2m-k-1} \det B_{\ell,\ldots,\ell+k-1}' =
\partial_{1,1}\tau +
\sum\nolimits_{j = k+1}^{2m+1-k} c_j \cdot \partial_{1,j}\partial_{k,2m+2-j}\tau.
\]
It suffices to show that \(c_{m+1} = 0\), at which point the argument
of step 3 from \parref{homogeneous-BD-submaximal} yields the result. Applying
\parref{homogeneous-rolling-det-computation} shows that \(\beta'\) has total
degree
\[
\deg \beta'
= k(n-2k-2) + k(k+1)/2 - 1
= \dim X - k - 1
\]
in the entries of \(B'\). Since \(B'\) contains only
\(\dim X - 2k\) root coordinates of \(X\), every term in \(\beta'\)
contains at least \(k-1\) of the \(x_{j,k+1-i}\) with
\(1 \leq i \leq j \leq k\) on or below the anti-diagonal of \(M_{1,\ldots,k}\).
The equations of \parref{homogeneous-BD} show that the \(x_{j,k+1-i}\) are
quadratic in the root coordinates, so degree considerations imply that each
term of \(\beta'\) contains either \(k\) or \(k-1\) contributions
from these quadratic terms; moreover, in the former case,
all nonzero terms are proportional to \(\partial_{1,1}\tau\). In other words,
each of degree \(\dim X - 2\) terms in \(\beta'\) are obtained from
\(\tau\) by omitting a quadratic term appearing in the lower anti-triangular
variables \(x_{j,k+1-i}\).  Examining the equations in \parref{homogeneous-BD}
shows that the variables in the \(m+1\) column either comes as a square
\(x_{i,m+1}^2\) or else with an even coefficient as \(2x_{i,m+1}x_{j,m+1}\); in
both cases, these vanish in \(\kk[\mathbf{U}_1]\). Thus the \(x_{i,m+1}\)
cannot be omitted in the rolling determinant above, so \(c_{m+1} = 0\). This
completes the proof of \parref{homogeneous-theorem} for \(X = \OGr(k,n)\) with
\(1 \leq k < \lfloor\frac{n}{2}\rfloor\).

\subsectiondash{Spinor variety}\label{homogeneous-spinor}
Consider the orthogonal Grassmannian corresponding to a maximal parabolic
indexed by one of the right-most vertices in the Dynkin diagrams
\[
\mathrm{B}_{m-1} \colon {\dynkin B{}}
\;\;\;\;\text{and}\;\;\;\;
\mathrm{D}_m \colon {\dynkin D{}}
\]
these are the \emph{spinor varieties}, all of which are isomorphic:
\[
X \coloneqq \OGr(m-1,2m-1) = \OGr_+(m,2m) = \OGr_-(m,2m).
\]
Geometrically, we view the spinor variety as parameterizing totally isotropic
subspaces of dimension \(m\) for a quadratic form \(q\) of dimension
\(n = 2m\). The matrix \(M\) of local coordinates from
\parref{homogeneous-strategy} on the Schubert cell provided by \(\mathbf{U}\)
is therefore of size \(m \times 2m\), and the equations in
\parref{homogeneous-BD} simply say that the initial \(M_{1,\ldots,m}\) is
anti-symmetric along its anti-diagonal; in what follows, it is slightly
more convenient to reorder the coordinates and work with the
anti-symmetric conjugate
\[
D \coloneqq
\begin{pmatrix}
0 & x_{1,m-1} & \cdots & x_{1,2} & x_{1,1} \\
 & 0 & \cdots &  x_{2,2} & x_{2,1} \\
 & & \ddots & \vdots & \vdots \\
 &  & & 0 & x_{m-1,1} \\
 &  & &  & 0
\end{pmatrix}
\eqqcolon
\begin{pmatrix}
0 & y_{1,2} & \cdots & y_{1,m-1} & y_{1,m} \\
  & 0 & \cdots &  y_{2,m-1} & y_{2,m} \\
& & \ddots & \vdots & \vdots \\
 &  & & 0 & y_{m-1,m} \\
 &  & &  & 0
\end{pmatrix}
\]
where we display only the upper triangular half; for the computation below,
it will also be convenient to rename the variables \(x_{i,m+1-j}\) to the
\(y_{i,j}\) which are indexed with respect to their usual position in \(D\).
A Levi factor is \(\mathbf{L} = \mathbf{GL}_m\) and
its adjoint representation on \(\mathfrak{u} \cong \wedge^2
\mathrm{Std}_{\mathbf{GL}_m}\) is irreducible, being the alternating square of
the standard representation arising from its action on alternating matrices.

Global sections of the ample generator \(\sO_X(1) \in \Pic X\) are now spanned
by \emph{Pfaffians} of principal anti-symmetric submatrices of \(D\); see
\cite[Appendix D]{FP:Degeneracy} for more on Pfaffians. That the restriction of
the Pl\"ucker line bundle satisfies \(\mathcal{L} \cong \sO_X(2)\) reflects the
identity \(\det D = (\pf D)^2\). The canonical bundle computation of
\parref{homogeneous-BD} means here that we are to construct the trace element
\(\tau\) as a polynomial of degree \((p-1)(2m-3)\) in the Pfaffians of
anti-symmetric submatrices of \(D\).

To describe the construction, use the following notation: For integers \(1 \leq
i \leq j \leq m\), write \([i,j]\) for the set of integers between \(i\) and
\(j\); for a subset \(I \subseteq [1,m]\), let \(D_I\) denote the principal
anti-symmetric submatrix of \(D\) consisting of those rows and columns indexed
by \(I\). Define the \emph{rolling Pfaffian} of \(D\) depending on the parity
of \(m\). If \(m = 2k\) is even, let
\[
\rollpf D \coloneqq
\Big(\prod\nolimits_{s = 1}^{k-1} \pf D_{[1,2s]} \cdot \pf D_{[1,2s+1] \setminus (s+1)}\Big)
\cdot
\pf D
\cdot
\Big(\prod\nolimits_{s = 1}^{k-1} \pf D_{[2s,2k]\setminus(k+s)} \cdot \pf D_{[2s+1,2k]}\Big).
\]
In the case \(m = 2k+1\) is odd, define
\begin{multline*}
\rollpf D \coloneqq
\Big(\prod\nolimits_{s = 1}^{k-1} \pf D_{[1,2s]} \cdot \pf D_{[1,2s+1] \setminus (s+1)}\Big)
\cdot \pf D_{[1,2k]}  \\ \cdot \pf D_{[1,2k+1] \setminus (k+1)} \cdot \pf D_{[2,2k+1]} \cdot
\Big(\prod\nolimits_{s = 1}^{k-1} \pf D_{[2s,2k+1]} \cdot \pf D_{[2s+1,2k+1] \setminus (k+s+1)}\Big).
\end{multline*}
The matrix appearing in the \(t\)-th term of the rolling Pfaffian is the
smallest principal submatrix of \(D\) containing the anti-diagonal starting
from \(y_{1,t+1}\) for \(1 \leq t \leq m-1\) and \(y_{t + 2 - m, m}\) for
\(m \leq t \leq 2m-3\); see Figure \parref{homogeneous-spinor-pfaffian.figure}.
The \((p-1)\)-power rolling Pfaffian of \(D\) realizes the trace element:

\begin{figure}[t!]
\centering
\begin{subfigure}[t]{0.4\textwidth}
\begin{tikzpicture}[
  every left delimiter/.style={xshift=0.7em},
  every right delimiter/.style={xshift=-0.7em}]
\matrix(m)[matrix of math nodes,
           left delimiter=(,
           right delimiter=),
           column sep=0.5em,
           row sep=0.25em]{
0 & y_{1,2} & y_{1,3} & y_{1,4} & y_{1,5} & y_{1,6} \\
  & 0       & y_{2,3} & y_{2,4} & y_{2,5} & y_{2,6} \\
  &         & 0       & y_{3,4} & y_{3,5} & y_{3,6} \\
  &         &         & 0       & y_{4,5} & y_{4,6} \\
  &         &         &         & 0       & y_{5,6} \\
  &         &         &         &         & 0 \\
};

\draw[rounded corners=5pt, thick]
([yshift=-2pt]m-1-2.east) |-
([xshift=-2pt]m-1-2.north) --
([yshift=2pt]m-1-2.west) |-
([xshift=2pt]m-1-2.south) --
cycle;
\draw[rounded corners=5pt, thick]
([yshift=-2pt]m-1-3.east) |-
([xshift=-2pt]m-1-3.north) --
([yshift=2pt]m-1-3.west) |-
([xshift=2pt]m-1-3.south) --
cycle;
\draw[rounded corners=5pt, thick]
([yshift=-2pt]m-1-4.east) |-
([xshift=-2pt]m-1-4.north) --
([yshift=2pt]m-2-3.west) |-
([xshift=2pt]m-2-3.south) --
cycle;
\draw[rounded corners=5pt, thick]
([yshift=-2pt]m-1-5.east) |-
([xshift=-2pt]m-1-5.north) --
([yshift=2pt]m-2-4.west) |-
([xshift=2pt]m-2-4.south) --
cycle;
\draw[rounded corners=5pt, thick]
([yshift=-2pt]m-1-6.east) |-
([xshift=-2pt]m-1-6.north) --
([yshift=2pt]m-3-4.west) |-
([xshift=2pt]m-3-4.south) --
cycle;
\draw[rounded corners=5pt, thick]
([yshift=-2pt]m-2-6.east) |-
([xshift=-2pt]m-2-6.north) --
([yshift=2pt]m-3-5.west) |-
([xshift=2pt]m-3-5.south) --
cycle;
\draw[rounded corners=5pt, thick]
([yshift=-2pt]m-3-6.east) |-
([xshift=-2pt]m-3-6.north) --
([yshift=2pt]m-4-5.west) |-
([xshift=2pt]m-4-5.south) --
cycle;
\draw[rounded corners=5pt, thick]
([yshift=-2pt]m-4-6.east) |-
([xshift=-2pt]m-4-6.north) --
([yshift=2pt]m-4-6.west) |-
([xshift=2pt]m-4-6.south) --
cycle;
\draw[rounded corners=5pt, thick]
([yshift=-2pt]m-5-6.east) |-
([xshift=-2pt]m-5-6.north) --
([yshift=2pt]m-5-6.west) |-
([xshift=2pt]m-5-6.south) --
cycle;
\end{tikzpicture}
\end{subfigure}%
~
\begin{subfigure}[t]{0.5\textwidth}
\begin{tikzpicture}[
  every left delimiter/.style={xshift=0.7em},
  every right delimiter/.style={xshift=-0.7em}]
\matrix(m)[matrix of math nodes, left delimiter=(, right delimiter=), column sep=0.5em, row sep=0.2em]{
0 & y_{1,2} & y_{1,3} & y_{1,4} & y_{1,5} & y_{1,6} & y_{1,7} \\
  & 0       & y_{2,3} & y_{2,4} & y_{2,5} & y_{2,6} & y_{2,7} \\
  &         & 0       & y_{3,4} & y_{3,5} & y_{3,6} & y_{3,7} \\
  &         &         & 0       & y_{4,5} & y_{4,6} & y_{4,7} \\
  &         &         &         & 0       & y_{5,6} & y_{5,7} \\
  &         &         &         &         & 0       & y_{6,7} \\
  &         &         &         &         &         & 0 \\
};
\draw[rounded corners=5pt, thick]
([yshift=-2pt]m-1-2.east) |-
([xshift=-2pt]m-1-2.north) --
([yshift=2pt]m-1-2.west) |-
([xshift=2pt]m-1-2.south) --
cycle;
\draw[rounded corners=5pt, thick]
([yshift=-2pt]m-1-3.east) |-
([xshift=-2pt]m-1-3.north) --
([yshift=2pt]m-1-3.west) |-
([xshift=2pt]m-1-3.south) --
cycle;
\draw[rounded corners=5pt, thick]
([yshift=-2pt]m-1-4.east) |-
([xshift=-2pt]m-1-4.north) --
([yshift=2pt]m-2-3.west) |-
([xshift=2pt]m-2-3.south) --
cycle;
\draw[rounded corners=5pt, thick]
([yshift=-2pt]m-1-5.east) |-
([xshift=-2pt]m-1-5.north) --
([yshift=2pt]m-2-4.west) |-
([xshift=2pt]m-2-4.south) --
cycle;
\draw[rounded corners=5pt, thick]
([yshift=-2pt]m-1-6.east) |-
([xshift=-2pt]m-1-6.north) --
([yshift=2pt]m-3-4.west) |-
([xshift=2pt]m-3-4.south) --
cycle;
\draw[rounded corners=5pt, thick]
([yshift=-2pt]m-1-7.east) |-
([xshift=-2pt]m-1-7.north) --
([yshift=2pt]m-3-5.west) |-
([xshift=2pt]m-3-5.south) --
cycle;
\draw[rounded corners=5pt, thick]
([yshift=-2pt]m-2-7.east) |-
([xshift=-2pt]m-2-7.north) --
([yshift=2pt]m-4-5.west) |-
([xshift=2pt]m-4-5.south) --
cycle;
\draw[rounded corners=5pt, thick]
([yshift=-2pt]m-3-7.east) |-
([xshift=-2pt]m-3-7.north) --
([yshift=2pt]m-4-6.west) |-
([xshift=2pt]m-4-6.south) --
cycle;
\draw[rounded corners=5pt, thick]
([yshift=-2pt]m-4-7.east) |-
([xshift=-2pt]m-4-7.north) --
([yshift=2pt]m-5-6.west) |-
([xshift=2pt]m-5-6.south) --
cycle;
\draw[rounded corners=5pt, thick]
([yshift=-2pt]m-5-7.east) |-
([xshift=-2pt]m-5-7.north) --
([yshift=2pt]m-5-7.west) |-
([xshift=2pt]m-5-7.south) --
cycle;
\draw[rounded corners=5pt, thick]
([yshift=-2pt]m-6-7.east) |-
([xshift=-2pt]m-6-7.north) --
([yshift=2pt]m-6-7.west) |-
([xshift=2pt]m-6-7.south) --
cycle;
\end{tikzpicture}
\end{subfigure}
\caption{The matrices appearing in the rolling Pfaffian are the smallest
anti-symmetric principal submatrices that contain the anti-diagonals starting
from the top and right border of the matrix \(D\), as is illustrated in the
cases \(m = 6\) and \(m = 7\).}
\label{homogeneous-spinor-pfaffian.figure}
\end{figure}

\begin{Lemma}\label{homogeneous-spinor-pfaffian}
\(\displaystyle
(\rollpf D)^{p-1} =
\Big(\prod\nolimits_{i = 1}^{m-1} \prod\nolimits_{j = 1}^{m-i} x_{i,j}\Big)^{p-1}
\in \kk[\mathbf{U}_1]\).
\end{Lemma}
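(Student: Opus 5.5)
The proof will mirror the inductive "triangularization" argument given for \parref{homogeneous-rolling-det-computation}: process the factors of the rolling Pfaffian in the order \(t = 1, 2, \ldots, 2m-3\), and show that, modulo the variables already accounted for by earlier factors, the \(t\)-th Pfaffian reduces to a single monomial. Everything takes place in \(\kk[\mathbf{U}_1] = \kk[y_{i,j}]/(y_{i,j}^p)\). The basic mechanism is the same as before: once \(y^{p-1}\) appears as a factor, any further occurrence of \(y\) contributes zero, so \(y\) may be set to \(0\) in all later factors.

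The key steps are as follows. Let \(A_t\) be the \(t\)-th anti-diagonal: the entries \(y_{i,j}\) with \(i + j = t+2\), lying inside the upper triangle. First I will check that the principal submatrix \(D_{I_t}\) chosen at step \(t\) is the smallest one containing \(A_t\). After setting to zero every entry of \(D_{I_t}\) lying on an earlier anti-diagonal \(A_{t'}\) with \(t' < t\), the submatrix becomes ``anti-triangular'': its only surviving entries lie on or to the lower right of \(A_t\).

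The second step is to show that the Pfaffian of such an anti-triangular anti-symmetric matrix is \(\pm\prod_{y \in A_t} y\). Here I use the expansion of the Pfaffian as a sum over perfect matchings of the index set \(I_t\). A matching contributes only if every pair \(\{i,j\}\) satisfies \(i + j \geq t+2\). The total of \(i + j\) over all pairs is the sum of \(I_t\), and the construction of \(I_t\) (removing the middle index in the odd cases) forces this sum to equal \(|A_t|\cdot(t+2)\). Hence every pair must lie exactly on \(A_t\), so the unique surviving matching is the anti-diagonal one. I expect this counting to be the main obstacle, because the index sets \([1,2s]\), \([1,2s+1]\setminus(s+1)\), \([2s,2k]\setminus(k+s)\), and so on must be checked separately by parity and according to which half of the matrix \(t\) lies in. The plan is to verify directly, in each case, that \(I_t\) consists of pairs symmetric about \((t+2)/2\), with the excluded index being the fixed point \((t+2)/2\) itself when \(t\) is even.

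Finally, I take \((p-1)\)-st powers and multiply over \(t\). The anti-diagonals \(A_1, \ldots, A_{2m-3}\) partition the strictly upper triangular entries of \(D\), so
\[
(\rollpf D)^{p-1} = \pm\Big(\prod\nolimits_{1 \leq i < j \leq m} y_{i,j}\Big)^{p-1}.
\]
Renaming \(y_{i,j} = x_{i,m+1-j}\) identifies this with the product in the statement. The sign disappears after raising to the \((p-1)\)-st power, or is irrelevant when \(p = 2\). The inductive legitimacy of setting earlier variables to zero needs one more justification: each earlier variable already occurs with exponent exactly \(p-1\) in the accumulated product, which is precisely what the inductive hypothesis guarantees.
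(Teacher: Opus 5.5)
Your proposal is correct and is essentially the paper's own sketched argument. The paper also kills the variables on earlier anti-diagonals using the accumulated \((p-1)\)-st powers, so that each successive Pfaffian becomes anti-triangular; your index-sum count over perfect matchings is a clean way to make precise that such a Pfaffian is \(\pm\) the product of the \(t\)-th anti-diagonal.

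When you check the index sets, work from the paper's description of the \(t\)-th factor as the smallest principal submatrix containing the \(t\)-th anti-diagonal, as in the figure. The displayed formula for odd \(m = 2k+1 \geq 5\), read literally, contains \(\pf D_{[2,2k+1]}\) twice and omits \(\pf D_{[2k,2k+1]}\), so its factors would not be in bijection with \(t = 1,\ldots,2m-3\).
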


This may be established with an elementary inductive argument.
As with \parref{homogeneous-rolling-det-computation}, the main observation
being that each successive term in the product is the Pfaffian of an
anti-symmetric matrix which is triangular along its lower anti-diagonal. For
example, if \(m \geq 5\), the first four terms appearing in the \((p-1)\)-power
rolling Pfaffian give, in the ring
\(\kk[y_{i,j} : 1 \leq i < j \leq m]/(y_{i,j}^p : 1 \leq i < j \leq m)\),
\begin{multline*}
\begin{vmatrix} 0 & y_{1,2} \\ & 0\end{vmatrix}^{p-1}
\begin{vmatrix} 0 & y_{1,3} \\ & 0\end{vmatrix}^{p-1}
\begin{vmatrix} 0 & y_{1,2} & y_{1,3} & y_{1,4} \\ & 0 & y_{2,3} & y_{2,4} \\ & & 0 & y_{3,4} \\ & & & 0 \end{vmatrix}^{p-1}
\begin{vmatrix} 0 & y_{1,2} & y_{1,4} & y_{1,5} \\ & 0 & y_{2,4} & y_{2,5} \\ & & 0 & y_{4,5} \\ & & & 0 \end{vmatrix}^{p-1} \\
= (y_{1,2}y_{1,3})^{p-1}
\begin{vmatrix} 0 & 0 & 0 & y_{1,4} \\ & 0 & y_{2,3} & y_{2,4} \\ & & 0 & y_{3,4} \\ & & & 0 \end{vmatrix}^{p-1}
\begin{vmatrix} 0 & 0 & y_{1,4} & y_{1,5} \\ & 0 & y_{2,4} & y_{2,5} \\ & & 0 & y_{4,5} \\ & & & 0 \end{vmatrix}^{p-1}
= (y_{1,2}y_{1,3}y_{1,4}y_{2,3}y_{1,5}y_{2,4})^{p-1}
\end{multline*}
where the vertical bars here denote the Pfaffian of the corresponding
anti-symmetric matrix. We leave the proof of \parref{homogeneous-spinor-pfaffian}
to the reader.

To conclude, observe that \((\rollpf D)^{p-1}\) defines a section of
\(\sO_X((p-1)(2m-3))\) and is divisible by the Pfaffian \(x_{1,m-2}\) of the
\(2 \times 2\) anti-symmetric submatrix \(D_{[1,2]}\). Since the weight of this
variable is indecomposable in \(\Phi^+_{\mathbf{P}}\), the expression
\((\rollpf D)^{p-1}/\pf D_{[1,2]}\) provides a section of
\(\omega_X^{\otimes 1-p} \otimes \sO_X(-p) \cong \sO_X((p-1)(2m-3)-1)\) realizing
\(\partial_{1,m-1}\tau\) in \(\kk[\mathbf{U}_1]\). Since \(\mathfrak{u} \cong
\wedge^2\mathrm{Std}_{\mathbf{GL}_m}\) is an irreducible representation for
\(\mathbf{L} \cong \mathbf{GL}_m\), all the sections \(\partial_{i,j}\tau\) are
in the image of the restriction map, and so the conclusion follows from
\parref{homogeneous-ampleness-criterion}.  This completes the proof of
\parref{homogeneous-theorem} in types \(\mathrm{B}\) and \(\mathrm{D}\).
\qed

\subsectiondash{Example}\label{homogeneous-quadrics}
When \(k = 1\), the orthogonal Grassmannian \(X = \OGr(1,n)\) is but a smooth
quadric hypersurface in \(\PP^{n-1}\). Thus \parref{homogeneous-theorem} says
that \(\mathcal{E}_X(-1)\) is globally generated for smooth quadric hypersurfaces
with \(\dim X \geq 3\), except maybe in the case \(\dim X = 3\) and \(p = 2\).
And indeed, \(\mathcal{E}_X(-1)\) is not globally generated in the latter case:
This follows, for instance, from the discussion of
\parref{homogeneous-quadric-threefold}. More directly, observe that the
restriction map of \parref{homogeneous-ampleness-computation} in this case is a
linear map of the form
\[
\res' \colon
\mathrm{H}^0(X,\sO_X(1)) \to
\kk[\mathbf{U}_1]_{< \tau}.
\]
This cannot be surjective since, the source has dimension \(5\) whereas the
target has dimension \(7\).



\subsectiondash{Type \(\mathbf{G}_2\)}\label{homogeneous-G}
Let \(\mathbf{G}\) be a semisimple algebraic group with Dynkin diagram
\(\dynkin G2\) of type \(\mathrm{G}_2\). Perhaps the most conceptual
description of \(\mathbf{G}\), due to Cartan in 1914, is as the
automorphism group of an octonion algebra: see \cite[Theorem
2.3.5]{SV:Octonions} for a modern algebraic proof.  A more suitable realization
of \(\mathbf{G}\) for our purposes, due to Engels in 1900, is as the stabilizer
of a generic \(3\)-form on a \(7\)-dimensional \(\kk\)-vector space \(V\); a
convenient choice of \(3\)-form, found in \cite[Eq. (3.13)]{Anderson:G2}, is
\[
\gamma \coloneqq
\mathrm{d}x_1 \wedge \mathrm{d}x_4 \wedge \mathrm{d}x_7 +
\mathrm{d}x_2 \wedge \mathrm{d}x_4 \wedge \mathrm{d}x_6 +
\mathrm{d}x_3 \wedge \mathrm{d}x_4 \wedge \mathrm{d}x_5 -
\mathrm{d}x_2 \wedge \mathrm{d}x_3 \wedge \mathrm{d}x_7 -
\mathrm{d}x_1 \wedge \mathrm{d}x_5 \wedge \mathrm{d}x_6.
\]

The simple roots \(\alpha_1\) and \(\alpha_2\) of \(\mathbf{G}\) have different
lengths. The homogeneous space for the maximal parabolic in \(\mathbf{G}\)
omitting the short root \(\alpha_1\) is isomorphic to a \(5\)-dimensional
quadric; omitting the long root \(\alpha_2\) instead gives the
\(\mathrm{G}_2\)-Grassmannian \(X\), which may be described as the closed
subvariety of \(\Gr(2,V)\) parameterizing \(2\)-dimensional spaces that are
isotropic for \(\gamma\):
\[
X \coloneqq
\{[U] \in \Gr(2,V) : \gamma(u,v,-) = 0\;\text{where \(U = \operatorname{span}\{u,v\}\)}\}.
\]
With the choice of \(\gamma\), the Schubert cell provided by the opposite
unipotent radical \(\mathbf{U}\) may be described, as is done in
\cite[Appendix D]{Anderson:Thesis}, in terms of the matrix
\[
M \coloneqq
\begin{pmatrix}
x_{\alpha_6} & -(x_{\alpha_3}x_{\alpha_5} + x_{\alpha_4}^2) & x_{\alpha_3} & x_{\alpha_4} & x_{\alpha_5} & 1 & 0 \\
x_{\alpha_2}x_{\alpha_4}- x_{\alpha_3}^2 & -(x_{\alpha_6} + x_{\alpha_3}x_{\alpha_4} + x_{\alpha_2}x_{\alpha_5}) & x_{\alpha_2} & x_{\alpha_3} & -x_{\alpha_4} & 0 & 1
\end{pmatrix}
\]
where the coordinates are indexed by their weights \(\alpha_2\),
\(\alpha_3 \coloneqq \alpha_1 + \alpha_2\),
\(\alpha_4 \coloneqq 2\alpha_1 + \alpha_2\),
\(\alpha_5 \coloneqq 3\alpha_1 + \alpha_2\), and
\(\alpha_6 \coloneqq 3\alpha_1 + 2\alpha_2\);
the weights may be deduced by comparing the relations amongst the
roots in type \(\mathrm{G}_2\) with the quadratic entries in \(M\).
A Levi factor is the group \(\mathbf{L} \cong \mathbf{GL}_2\) corresponding to
the root subsystem generated by \(\alpha_1\) and so, at least when \(p \geq 5\),
inspecting weights shows that the level decomposition of the adjoint
representation of \(\mathbf{L}\) on \(\mathfrak{u}\) decomposes as
\[
\mathfrak{u} \cong
\Sym^3\mathrm{Std}_{\mathbf{GL}_2} \otimes \kk_{-\alpha_2} \oplus
\kk_{-\alpha_6}.
\]

Globally, \(X\) may be realized as a set-theoretic linear section of the
Grassmannian, given by the non-transverse intersection
\(\Gr(2,V) \cap \PP\mathfrak{g}\), where the Lie algebra \(\mathfrak{g}\) of
\(\mathbf{G}\) canonically embeds into the ambient Pl\"ucker space via
the splitting \(\wedge^2 V \cong V \oplus \mathfrak{g}\) as
\(\mathbf{G}\)-representation: see \cite[\S22.3]{FH:Rep} for this latter
fact.  Better, the \(3\)-form \(\gamma\) induces a regular morphism of vector
bundles \(\wedge^2 \mathcal{S} \to \mathcal{Q}^\vee\) on \(\Gr(2,V)\), where
\(\mathcal{S}\) and \(\mathcal{Q}\) are the tautological sub- and quotient
bundles of ranks \(2\) and \(5\), respectively, whose vanishing locus
is precisely \(X\). The canonical bundle of \(X\) is therefore
\[
\omega_X
\cong \omega_{\Gr(2,7)} \otimes \det \mathcal{Q}^\vee(1)\rvert_X
\cong \sO_X(-3).
\]

\subsectiondash{}\label{homogeneous-G-computations}
To prove \parref{homogeneous-theorem} for \(X\), we must construct the elements
\(\partial_{\alpha_i}\tau\) as polynomials of degree \(2p-3\) in the minors of
\(M\).  We first construct the following element in the first irreducible
summand of \(\mathfrak{u} \cdot \tau\):
\[
-\partial_{\alpha_3}\tau
= x_{\alpha_3}^{p-2}(x_{\alpha_2} x_{\alpha_4} x_{\alpha_5} x_{\alpha_6})^{p-1}
\in \kk[\mathbf{U}_1] = \kk[x_{\alpha_2},\ldots,x_{\alpha_6}]/(x_{\alpha_2}^p,\ldots,x_{\alpha_6}^p).
\]
Since \(\alpha_3 = \alpha_1 + \alpha_2\) is indecomposable amongst the set of roots
\(\Phi^+_{\mathbf{P}} = \{\alpha_2,\alpha_3,\alpha_4,\alpha_5,\alpha_6\}\), the
displayed monomial spans the \(\lambda - \alpha_3 = 3(p-1)(3\alpha_1+2\alpha_2)-(\alpha_1+\alpha_2)\)
weight space in \(\kk[\mathbf{U}_1]\). This already implies that the following
weight \(\lambda - \alpha_3\) section of \(\sO_X(2p-3)\) is proportional to \(\partial_{\alpha_3}\tau\):
\[
(\det M_{1,7})^{p-3} 
(\det M_{1,5})
(\det M_{1,2})^{p-1} =
c  x_{\alpha_3}^{p-2}(x_{\alpha_2} x_{\alpha_4} x_{\alpha_5} x_{\alpha_6})^{p-1} \in \kk[\mathbf{U}_1].
\]
Thus the task is to show that the scalar \(c \in \kk\) is nonzero. Write the first \(p-2\) factors as
\[
(\det M_{1,7})^{p-3}(\det M_{1,5}) =
-x_{\alpha_4}x_{\alpha_6}^{p-2}
- x_{\alpha_5}x_{\alpha_6}^{p-3}(x_{\alpha_2}x_{\alpha_4} - x_{\alpha_3}^2)
\]
and separately compute the product of the two summands with
\[
(\det M_{1,2})^{p-1} =
\big(
-x_{\alpha_6}(x_{\alpha_6} + x_{\alpha_3}x_{\alpha_4} + x_{\alpha_2}x_{\alpha_5})
+ (x_{\alpha_3}x_{\alpha_5} + x_{\alpha_4}^2)(x_{\alpha_2}x_{\alpha_4} - x_{\alpha_3}^2)
\big)^{p-1}.
\]

In the product with \(-x_{\alpha_4}x_{\alpha_6}^{p-2}\), observe that only terms in
\((\det M_{1,2})^{p-1}\) which are linear in \(x_{\alpha_6}\) may contribute, and so
this gives the first line of
\begin{align*}
-x_{\alpha_4}x_{\alpha_6}^{p-2} (\det M_{1,2})^{p-1}
& =
-x_{\alpha_4}x_{\alpha_6}^{p-1}
(x_{\alpha_3}x_{\alpha_4} + x_{\alpha_2}x_{\alpha_5})
(x_{\alpha_3}x_{\alpha_5} + x_{\alpha_4}^2)^{p-2}(-x_{\alpha_3}^2 + x_{\alpha_2}x_{\alpha_4})^{p-2} \\
& = -x_{\alpha_3}^{p-2}(x_{\alpha_2}x_{\alpha_4}x_{\alpha_5}x_{\alpha_6})^{p-1}.
\end{align*}
Examining powers of \(x_{\alpha_2}\) shows that the nonzero contributions
from the three binomials must involve the \(x_{\alpha_2}x_{\alpha_5}\) in the
first binomial and the \((p-2)\)-nd power of \(x_{\alpha_2}x_{\alpha_4}\) in
the third binomial. With this, the second equality follows.

In the product with
\(-x_{\alpha_5}x_{\alpha_6}^{p-3}(x_{\alpha_2}x_{\alpha_4} - x_{\alpha_3}^2)\),
only terms in \((\det M_{1,2})^{p-1}\) which are quadratic in
\(x_{\alpha_6}\) may contribute, and so this gives
\begin{multline*}
-x_{\alpha_5}x_{\alpha_6}^{p-3}(x_{\alpha_2}x_{\alpha_4} - x_{\alpha_3}^2)
 \Big[x_{\alpha_6}^2(x_{\alpha_3}x_{\alpha_5} + x_{\alpha_4}^2)^{p-2}(x_{\alpha_2}x_{\alpha_4} - x_{\alpha_3}^2)^{p-2} \\
+ x_{\alpha_6}^2(x_{\alpha_3}x_{\alpha_4} + x_{\alpha_2}x_{\alpha_5})^2
  (x_{\alpha_3}x_{\alpha_5} + x_{\alpha_4}^2)^{p-3}(x_{\alpha_2}x_{\alpha_4} - x_{\alpha_3}^2)^{p-3}\Big].
\end{multline*}
For the first summand in the brackets, examining powers of \(x_{\alpha_2}\)
shows that it contributes a coefficient of \(-1\) to the
result. For the second summand, observe that
\((x_{\alpha_3}x_{\alpha_4} + x_{\alpha_2}x_{\alpha_5})^2\) must contribute
either a linear or quadratic term in \(x_{\alpha_5}\), and so its product
with the initial factor outside the brackets is
\[
-x_{\alpha_5}x_{\alpha_6}^{p-1}(x_{\alpha_2} x_{\alpha_4} - x_{\alpha_3}^2)^{p-2}(
2x_{\alpha_2}x_{\alpha_3}^{p-2}x_{\alpha_4}x_{\alpha_5}^{p-2}
-3x_{\alpha_2}^2x_{\alpha_3}^{p-4}x_{\alpha_4}^2x_{\alpha_5}^{p-2}) =
4x_{\alpha_3}^{p-2}(x_{\alpha_2}x_{\alpha_4}x_{\alpha_5}x_{\alpha_6})^{p-1}.
\]
In sum, this gives
\(
-x_{\alpha_5}x_{\alpha_6}^{p-3}(x_{\alpha_2}x_{\alpha_4} - x_{\alpha_3}^2)
(\det M_{1,2})^{p-1}
= 3x_{\alpha_3}^{p-2}(x_{\alpha_2}x_{\alpha_4}x_{\alpha_5}x_{\alpha_6})^{p-1}
\)
and so that \(c = -1 + 3 = 2\), which is nonzero since \(p \geq 5\).

It remains to construct the basis element
\(
-\partial_{\alpha_6}\tau =
(x_{\alpha_2}x_{\alpha_3}x_{\alpha_4}x_{\alpha_5})^{p-1} x_{\alpha_6}^{p-2} \in \kk[\mathbf{U}_1]
\).
Since \(\alpha_6 = \alpha_2 + \alpha_5 = \alpha_3 + \alpha_4\), the \(\lambda - \alpha_6\)
weight space of \(\kk[\mathbf{U}_1]\) is \(3\)-dimensional, and also contains the monomials
\(\partial_{\alpha_2} \partial_{\alpha_5}\tau\) and
\(\partial_{\alpha_3} \partial_{\alpha_4}\tau\). A straightforward computation
shows that
\begin{multline*}
(\det M_{1,7})^{p-2} (\det M_{1,2})^{p-1} =
(x_{\alpha_2}x_{\alpha_3}x_{\alpha_4}x_{\alpha_5})^{p-1} x_{\alpha_6}^{p-2}
\\+ (x_{\alpha_2}x_{\alpha_5}x_{\alpha_6})^{p-1}(x_{\alpha_3}x_{\alpha_4})^{p-2}
- 3(x_{\alpha_3}x_{\alpha_4}x_{\alpha_6})^{p-1}(x_{\alpha_2}x_{\alpha_5})^{p-2}.
\end{multline*}
To conclude, we show that \(\partial_{\alpha_2}\partial_{\alpha_5}\tau\) and
\(\partial_{\alpha_3}\partial_{\alpha_4}\tau\) also lie in the image of the
restriction map \(\res'\) using the action the Lie algebra \(\mathfrak{u}\).
Let \(X_{\alpha_6} = \partial_{\alpha_6}\) be the Lie algebra element
dual to \(x_{\alpha_6}\), and write
\[
X_{\alpha_2} = \partial_{\alpha_2} + c x_{\alpha_5}\partial_{\alpha_6}, \;\;\;
X_{\alpha_3} = \partial_{\alpha_3} + d x_{\alpha_4}\partial_{\alpha_6}, \;\;\;
X_{\alpha_4} = \partial_{\alpha_4} + d' x_{\alpha_3}\partial_{\alpha_6}, \;\;\;
X_{\alpha_5} = \partial_{\alpha_5} + c' x_{\alpha_2}\partial_{\alpha_6},
\]
for those corresponding to the root coordinates \(x_{\alpha_2}\),
\(x_{\alpha_3}\), \(x_{\alpha_4}\), and \(x_{\alpha_5}\), respectively.
Directly computing commutators of the derivations shows that
\[
[X_{\alpha_2}, X_{\alpha_5}] = (c' - c)\,X_{\alpha_6}
\;\;\text{and}\;\;
[X_{\alpha_3}, X_{\alpha_4}] = (d' - d)\,X_{\alpha_6}.
\]
Examining the structure constants in type \(\mathrm{G}_2\) shows that
\(c' - c\) is a nonzero multiple of \(\pm 1\), whereas \(d' - d\) is
a nonzero multiple of \(\pm 3\). Since the characteristic \(p\) is \(\geq 5\),
both are nonzero. Arguing as in step 2 of \parref{homogeneous-BD-submaximal}
now shows that \(\partial_{\alpha_2}\partial_{\alpha_5}\tau\) and
\(\partial_{\alpha_3}\partial_{\alpha_4}\tau\) lie in the image of the
restriction map. Combined with the determinant computation above, this
implies that \(\partial_{\alpha_6}\tau\) also lies in the image of
\(\res'\).  This completes the proof of \parref{homogeneous-theorem} in the
case of the \(\mathrm{G}_2\)-Grassmannian.
\qed

\section{Exceptions}\label{section-exceptions}
The exceptions in \parref{homogeneous-theorem} are truly exceptions: namely,
let \(X\) be either the symplectic Grassmannian \(\mathbf{SGr}(k,2m)\) for
\(2 \leq k \leq m\) with \(p = 2\); or the odd orthogonal Grassmannian
\(\mathbf{OGr}(m-1,2m+1)\) with \(p = 2\); or the \(\mathrm{G}_2\)-Grassmannian
with \(p = 2\) or \(p = 3\). In all of these cases, \(\mathcal{E}_X\)
is \emph{not} ample: see \parref{exceptions-symplectic-not-ample},
\parref{exceptions-orthogonal}, and \parref{exceptions-G2-not-ample},
respectively. In most cases, non-ampleness is explained by the existence of an
exotic isogeny between the relevant algebraic groups:
between \(\mathbf{SO}_{2m+1}\) and \(\mathbf{Sp}_{2m}\) in characteristic
\(p = 2\) for the first two cases, and for a group of type \(\mathrm{G}_2\)
with itself in characteristic \(p = 3\).

\subsectiondash{Orthogonal and symplectic Grassmannian}\label{exceptions-symplectic}
Suppose \(\kk\) has characteristic \(p = 2\), let \(V\) be a \(\kk\)-vector space
of dimension \(2m+1\), and let \(q \in \Sym^2 V^\vee\) be a nonsingular quadratic form,
by which we mean that the associated quadric hypersurface
\(Q \coloneqq \mathrm{V}(q) \subset \PP V\) is smooth. Since \(-1 = 1\) in \(\kk\),
the polar form \(b_q \colon V \times V \to \kk\) associated with \(q\), that
is, the symmetric bilinear form defined by
\[
b_q(u,v) \coloneqq q(u+v) - q(u) - q(v)
\;\;\text{for \(u,v \in V\),}
\]
is also alternating, and hence has even rank;
its \emph{radical}
\[
\operatorname{rad} b_q \coloneqq \{v \in V : b_q(u,v) = 0\;\text{for all \(u \in V\)}\}
\]
is therefore nonzero and nonsingularity of \(q\) implies that it is
\(1\)-dimensional, and \(b_q\) induces a nondegenerate symplectic form
\(\omega\) on the \(2m\)-dimensional quotient \(W \coloneqq V/\operatorname{rad} b_q\).
The representation of \(\mathbf{SO}(V,q)\) on \(W\) through the quotient map
\(\pi \colon V \to W\) preserves \(\omega\), yielding a purely inseparable
isogeny \(\mathbf{SO}(V,q) \to \mathbf{Sp}(W,\omega)\) of height \(1\) and
degree \(2^{2m}\): see \cite[\S23.5]{Borel} for more details. This induces
morphisms of the corresponding homogeneous spaces, the simplest being
the projection
\[
Q \coloneqq \{(x_0:\cdots:x_{2m}) \in \PP^{2m} : x_0x_1 + \cdots + x_{2m-2}x_{2m-1} + x_{2m}^2 = 0\}
\to \PP^{2m-1}
\]
from the strange point \((0:\cdots:0:1)\) of a smooth quadric in \(\PP^{2m}\)
to the \(\PP^{2m-1}\) of the first \(2m\) coordinates. More generally and
geometrically, we have:

\begin{Lemma}\label{exceptions-symplectic-projection}
The image under \(\pi \colon V \to W\) of a \(k\)-dimensional \(q\)-isotropic
subspace \(U \subset V\) is a \(k\)-dimensional \(\omega\)-isotropic subspace
\(\pi(U) \subset W\). The assignment \(U \mapsto \pi(U)\) defines a
\(\kk\)-morphism
\[
\varphi \colon \mathbf{OGr}(k,V,q) \to \mathbf{SGr}(k,W,\omega)
\]
which is purely inseparable of height \(1\), finite of degree \(2^k\), and
satisfies
\(\varphi^*\sO_{\mathbf{SGr}(k,W,\omega)}(1) \cong \sO_{\mathbf{OGr}(k,V,q)}(1)\).
\end{Lemma}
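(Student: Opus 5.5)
The argument is linear algebra on the fibres, followed by a relative version over the orthogonal Grassmannian. Write \(L \coloneqq \operatorname{rad} b_q\) and pick a generator \(v_0\). Nonsingularity of \(q\) forces \(q(v_0) \neq 0\): otherwise \(v_0\) would be a singular point of \(Q\), since all partials of \(q\) at \(v_0\) are given by \(b_q(v_0,-) = 0\). So a \(q\)-isotropic \(U\) meets \(L\) trivially, and \(\pi\rvert_U\) is injective. Isotropy of \(\pi(U)\) follows from
\[
\omega(\pi u,\pi v) = b_q(u,v) = q(u+v)-q(u)-q(v) = 0 .
\]
To get the morphism \(\varphi\), I would run this argument on the tautological subbundle \(\mathcal{S} \subset V \otimes \sO\) over \(\mathbf{OGr}(k,V,q)\). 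The composite \(\mathcal{S} \to W \otimes \sO\) is injective on every fibre, so it is a subbundle. It defines a map to \(\Gr(k,W)\) that factors through \(\mathbf{SGr}(k,W,\omega)\), because the pulled-back section of \(\wedge^2\mathcal{S}^\vee\) given by \(\omega\) is the polarization of the vanishing section of \(\Sym^2\mathcal{S}^\vee\) given by \(q\). Since \(\det \pi(\mathcal{S}) \cong \det \mathcal{S}\), the Plücker bundle pulls back to the Plücker bundle. In the non-spinor range this is the ample generator \(\sO(1)\), and I read the statement with \(\sO(1)\) meaning the Plücker bundle in general. Finiteness then follows because \(\varphi\) is proper and the pullback of an ample bundle is ample, so no curve is contracted.

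For pure inseparability, take an isotropic \(W' \subset W\) of dimension \(k\) and set \(\widetilde W \coloneqq \pi^{-1}(W')\), which has dimension \(k+1\) and contains \(L\). Both \(\omega\rvert_{W'}\) and \(b_q(L,-)\) vanish, so \(b_q\rvert_{\widetilde W} = 0\). Hence \(q\rvert_{\widetilde W}\) is additive and Frobenius-semilinear: \(q(au+bv) = a^2q(u)+b^2q(v)\). Over the algebraically closed field \(\kk\), this means \(q\rvert_{\widetilde W} = \ell^2\) for a nonzero linear form \(\ell\) (nonzero since \(q(v_0) \neq 0\)). Then \(U \coloneqq \ker \ell\) is the unique \(k\)-dimensional \(q\)-isotropic subspace of \(\widetilde W\), and \(\pi(U) = W'\). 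So \(\varphi\) is bijective on points, hence surjective and universally injective.

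For height \(1\), I would globalize the same construction. On \(\mathbf{SGr}\), \(q\) restricted to the rank \(k+1\) bundle \(\pi^{-1}(\mathcal{S}_W)\) is a Frobenius-linear map \(F^*\pi^{-1}(\mathcal{S}_W) \to \sO\). Its kernel is a rank \(k\) subbundle of \(F^*V\) that is isotropic for \(F^*q\). This gives \(\psi \colon \mathbf{SGr} \to \mathbf{OGr}\), and I would check that \(\varphi\circ\psi\) and \(\psi\circ\varphi\) are the Frobenius morphisms by tracking Frobenius twists, using that \(\kk\) is perfect.

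The main obstacle I expect is the degree \(2^k\). My plan is to use height \(1\): the degree of a height-\(1\) purely inseparable morphism between smooth varieties is \(p^{r}\), where \(r\) is the rank of the foliation \(\ker d\varphi\), and \(r\) can be computed at one point since \(\varphi\) is equivariant. First I check that the dimensions agree:
\[
k(2m+1-k)-\tfrac{k(k+1)}{2} = k(2m-k)-\tfrac{k(k-1)}{2}.
\]
Next I compute the tangent space at \(U\). It consists of \(f \in \operatorname{Hom}(U,V/U)\) with \(b_q(fu,v)+b_q(u,fv) = 0\) and \(b_q(u,fu) = 0\). The differential \(d\varphi\) composes \(f\) with \(V/U \to W/\pi(U)\), so its kernel consists of the \(f\) with image in \((U+L)/U\), i.e. \(f = \lambda(-)\,v_0\) with \(\lambda \in U^\vee\). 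Both conditions hold automatically because \(v_0 \in \operatorname{rad} b_q\). So \(\ker d\varphi \cong U^\vee\) has rank \(k\), and the degree is \(2^k\). As a sanity check, I would compare with the degree-\(2\) strange projection \(Q \to \PP^{2m-1}\) in the case \(k = 1\).
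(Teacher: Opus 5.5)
Your proposal is correct. The first half is the paper's argument: \(q(v_0)\neq 0\), isotropy of \(\pi(U)\) by polarization, the morphism built from the tautological subbundle with Pl\"ucker compatibility, and bijectivity via \(q\rvert_{\pi^{-1}(\bar U)} = \ell^2\).

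You diverge from the paper for height \(1\) and for the degree.

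The paper gets both at once from the scheme-theoretic fibre over \([\bar U]\), computed on Artinian points. In a basis of \(\pi^{-1}(\bar U)\otimes_\kk A\) where \(q\) becomes \(y^2\), the isotropic lifts are \(A(u_1+a_1v)\oplus\cdots\oplus A(u_k+a_kv)\) with \(a_i^2=0\). So the fibre is \(\operatorname{Spec}\kk[\alpha_1,\ldots,\alpha_k]/(\alpha_1^2,\ldots,\alpha_k^2)\). This tacitly uses that a finite morphism of smooth varieties of equal dimension is flat, so that fibre length is the degree and fibrewise height \(1\) propagates to \(\varphi\).

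You instead construct the dual map \(\psi\) globally from the Frobenius-linear form \(F^*\pi^{-1}(\mathcal{S}_W)\to\sO\), and this works. For \(s_i\) in \(\pi^{-1}(\mathcal{S}_W)\) one has \(F^*q(\sum s_i\otimes b_i)=(\sum b_iq(s_i))^2\), since \(b_q\) vanishes there. So the kernel is isotropic, and both composites send a subspace to its Frobenius twist. You then read the degree off \(\ker d\varphi\cong\operatorname{Hom}(U,L)\) via the Jacobson correspondence, using homogeneity to pass from one point to the generic rank.

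Your route costs more machinery, namely Jacobson's theory and equivariance. In return it produces \(\psi\) intrinsically. The paper needs \(\psi\) right afterwards in \parref{exceptions-symplectic-dual} and \parref{exceptions-orthogonal-leaves}, and builds it there separately from the explicit map \(\PP^{2m-1}\to Q\) and lines. Your route also identifies the foliation of \(\varphi\) as \(\mathcal{S}^\vee\otimes L\). The paper's fibre computation is shorter and more elementary.

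Your reading of \(\sO(1)\) as the Pl\"ucker bundle matches what the paper's proof actually shows and how \parref{exceptions-symplectic-not-ample} uses the lemma. It differs from the ample generator only when \(k=m\).
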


\begin{proof}
Nonsingularity of \(q\) implies that \(q\)-isotropic subspaces \(U \subset V\)
do not contain \(\operatorname{rad} b_q\). Combined with the fact that \(\omega\)
is induced by \(b_q\), it follows that \(\pi \colon V \to W\) sends \(q\)-isotropic
subspaces to \(\omega\)-isotropic ones of the same dimension. Applying \(\pi\)
to the universal subbundle on \(\mathbf{OGr}(k,V,q)\) induces
the morphism \(\varphi \colon \mathbf{OGr}(k,V,q) \to \mathbf{SGr}(k,W,\omega)\) of
schemes over \(\kk\); moreover, since \(\varphi\) is induced by linear projection, it
respects the Pl\"ucker line bundles.

It remains to show that \(\varphi\) is finite, purely inseparable of height \(1\),
and of degree \(2^k\). Consider a \(\kk\)-point of \(\mathbf{SGr}(k,W,\omega)\)
corresponding to a \(k\)-dimensional \(\omega\)-isotropic \(\bar{U} \subset W\),
and consider its scheme-theoretic inverse image \(\varphi^{-1}([\bar{U}])\).
Its \(\kk\)-points are given by \(q\)-isotropic subspaces in the
\((k+1)\)-dimensional space \(\pi^{-1}(\bar{U}) \subset V\). Since this
contains \(\operatorname{rad} b_q\) and since \(\bar{U}\) is isotropic for the
symplectic form \(\omega\) induced by \(b_q\), the restriction of \(q\) thereon
has rank \(1\), and so it contains a unique \(k\)-dimensional subspace \(U\)
such that \(\pi(U) = \bar{U}\). This already shows that \(\varphi\) is a
bijection on \(\kk\)-points, whence finite and purely inseparable: see
\citeSP{01S4}.

To understand the schematic structure of \(\varphi^{-1}([\bar{U}])\), it suffices
to consider its points valued in Artinian local \(\kk\)-algebras \(A\): these
are given by free rank \(k\) \(A\)-submodules of
\(\pi^{-1}(\bar{U}) \otimes_\kk A\) which are \(q\)-isotropic and which restrict to
the subspace \(U\) upon reduction modulo the maximal ideal. We may choose a basis
\(\pi^{-1}(\bar{U}) \otimes_\kk A \cong A u_1 \oplus \cdots \oplus A u_k \oplus A v\)
such that the restriction of \(q\) is the quadratic form \(y^2\), the square
of the coordinate dual to \(v\). With this, it is straightforward to see that
\[
\varphi^{-1}([\bar{U}])(A) \cong
\{A (u_1 + a_1 v) \oplus \cdots \oplus A (u_k + a_k v) : a_1,\ldots,a_k \in A\;\text{such that}\; a_i^2 = 0\}.
\]
This implies that
\(\varphi^{-1}([U]) \cong \operatorname{Spec} \kk[\alpha_1,\ldots,\alpha_k]/(\alpha_1^2,\ldots,\alpha_k^2)\),
from which the result follows.
\end{proof}

\begin{Proposition}\label{exceptions-symplectic-not-ample}
If \(p = 2\), then \(\mathcal{E}_X\) is not ample for any
\(X \coloneqq \mathbf{SGr}(k,2m)\) with \(2 \leq k \leq m\).
\end{Proposition}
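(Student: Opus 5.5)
The plan is to use the purely inseparable covering \(\varphi \colon Y \coloneqq \mathbf{OGr}(k,2m+1) \to X = \mathbf{SGr}(k,2m)\) of \parref{exceptions-symplectic-projection}, and to show that its Tschirnhausen bundle \(\mathcal{E}_\varphi\) is not ample. By \parref{setup-ample-insep-tschirnhausen} this forces \(\mathcal{E}_X\) to be non-ample; note that \(Y\) is smooth projective, and \(\varphi\) is finite and purely inseparable of degree \(2^k\). So \(\mathcal{E}_\varphi\) has rank \(2^k - 1\).

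We bound its degree on a line. Pick a line \(L \subset X\) in the Plücker embedding, for instance a line of isotropic \(k\)-spaces containing a fixed isotropic \((k-1)\)-space \(U'\) inside a fixed \((k+1)\)-space \(U''\). Then compute \(\deg \mathcal{E}_\varphi\rvert_L = -\deg c_1(\varphi_*\sO_Y)\rvert_L\). Since \(\varphi\) is flat and finite (both sides are smooth), Grothendieck--Riemann--Roch, or equivalently the discriminant/relative canonical formula, gives
\[
c_1(\varphi_*\sO_Y) = \tfrac{1}{2}\varphi_*\big(c_1(\omega_{Y/X})\big)
\]
up to the standard normalization. I would instead use the more robust identity \(\det(\varphi_*\sO_Y)^{\otimes 2} \cong \varphi_*\text{-norm of } \omega_{Y/X}^{\vee}\), valid for finite flat Gorenstein morphisms.

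Next compute \(\omega_{Y/X} = \omega_Y \otimes \varphi^*\omega_X^\vee\). We have \(\omega_Y = \sO_Y(-(2m+1-k-1)) = \sO_Y(-(2m-k))\) from \parref{homogeneous-BD}, using \(k < m\), so the Plücker bundle is the generator. We have \(\omega_X = \sO_X(-(2m-k+1))\) from \parref{homogeneous-C}. Since \(\varphi^*\sO_X(1) = \sO_Y(1)\), this gives \(\omega_{Y/X} = \sO_Y(1)\), and the norm of \(\sO_Y(1)\) is \(\sO_X(2^k)\). Hence \(c_1(\varphi_*\sO_Y) = -2^{k-1}H\), and
\[
\deg \mathcal{E}_\varphi\rvert_L = 2^{k-1} < 2^k - 1
\]
exactly when \(k \geq 2\). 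An ample bundle on \(L \cong \PP^1\) splits into positive line bundles, so its degree is at least its rank. Therefore \(\mathcal{E}_\varphi\rvert_L\) is not ample, and this is consistent with the quadric threefold example (\(k = m = 2\), rank \(3\), degree \(2\)).

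The main obstacle is the case \(k = m\). There \(\mathbf{OGr}(m,2m+1)\) is the spinor variety, whose Plücker bundle is \(\sO_Y(2)\), and \(\varphi^*\) of the generator is this Plücker bundle. I would handle it by redoing the computation in terms of the Plücker class \(\mathcal{L}\). We have \(\omega_Y = \mathcal{L}^{-(2m+1-m-1)} = \mathcal{L}^{-m}\) and \(\omega_X = \sO_X(-(m+1))\), so again \(\omega_{Y/X} \cong \mathcal{L} = \varphi^*\sO_X(1)\), and the same numerics hold.

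A secondary point is justifying the GRR/norm formula \(c_1(\varphi_*\sO_Y) = -\tfrac12 \operatorname{Nm}(c_1\omega_{Y/X})\) in characteristic \(2\). I would check this via the local structure \(\kk[\alpha_i]/(\alpha_i^2)\) from the lemma, or via GRR with rational coefficients, which is characteristic-free in the Chow ring. If that approach fails, the fallback is a direct computation of \(\varphi^{-1}(L)\) as a double-cover tower.
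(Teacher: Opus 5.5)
Your proposal is correct and is essentially the paper's proof. It uses the same covering \(\varphi \colon \mathbf{OGr}(k,2m+1) \to \mathbf{SGr}(k,2m)\) and the same reduction via \parref{setup-ample-insep-tschirnhausen}. It also makes the same comparison of \(c_1(\mathcal{E}_\varphi) = 2^{k-1}c_1(\sO_X(1))\) against the rank \(2^k-1\) on a line, and it handles \(k=m\) uniformly by working with Pl\"ucker bundles exactly as the paper does. Your route through \(\omega_{Y/X} \cong \sO_Y(1)\) and \(\det(\varphi_*\sO_Y)^{\otimes 2} \cong \operatorname{Nm}(\omega_{Y/X}^\vee)\) only repackages the paper's Grothendieck--Riemann--Roch computation. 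Note that the correct sign is \(c_1(\varphi_*\sO_Y) = -\tfrac{1}{2}\varphi_*c_1(\omega_{Y/X})\), as in your final formula, not the \(+\tfrac12\) you first wrote. Also, the trace-form discriminant is useless here because the trace of a purely inseparable extension vanishes, so it is the duality/norm identity that does the work.
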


\begin{proof}
Set \(Y \coloneqq \mathbf{OGr}(k,2m+1)\) and consider the morphism
\(\varphi \colon Y \to X\) from \parref{exceptions-symplectic-projection}. Its
Tschirnhausen bundle \(\mathcal{E}_\varphi\) is thus a vector bundle of rank
\(\deg \varphi - 1 = 2^k - 1\) and a Grothendieck--Riemann--Roch computation
using the canonical bundle formulae found in \parref{homogeneous-C} and
\parref{homogeneous-BD} shows that
\begin{align*}
c_1(\mathcal{E}_\varphi)
= -c_1(\varphi_*\sO_Y)
& = 2^{k-1} c_1(\mathcal{T}_X) - \frac{1}{2} \varphi_*c_1(\mathcal{T}_Y) \\
& = 2^{k-1}(2m+1-k) c_1(\sO_X(1)) - \frac{1}{2}(2m-k) \varphi_*c_1(\sO_Y(1))
\end{align*}
where \(\sO_X(1)\) and \(\sO_Y(1)\) are the Pl\"ucker line bundles on the two
isotropic Grassmannian. Since \(\varphi\) preserves Pl\"ucker line bundles, the
projection formula gives
\[
\varphi_*c_1(\sO_Y(1))
= \varphi_*c_1(\varphi^*\sO_X(1))
= \varphi_*\varphi^*c_1(\sO_X(1))
= 2^k c_1(\sO_X(1)).
\]
Therefore \(c_1(\mathcal{E}_\varphi) = 2^{k-1}c_1(\sO_X(1))\). Now, \(X\)
contains lines in its Pl\"ucker embedding, and the restriction of
\(\mathcal{E}_\varphi\) to any such is a vector bundle on \(\PP^1\) whose rank
\(2^k-1\) is smaller than its degree \(2^{k-1}\) whenever \(k \geq 2\): this is
not be ample, hence \(\mathcal{E}_\varphi\) is not ample, and thus
\(\mathcal{E}_X\) is not ample by \parref{setup-ample-insep-tschirnhausen}.
\end{proof}

\subsectiondash{}\label{exceptions-symplectic-dual}
Consider the morphism \(\psi \colon \mathbf{SGr}(k,W,\omega) \to \mathbf{OGr}(k,V,q)\)
dual to \(\varphi\); that is, the unique finite purely inseparable morphism of
height \(1\) such that \(\psi \circ \varphi\) and \(\varphi \circ \psi\) are
the Frobenius morphisms of \(\mathbf{OGr}(k,V,q)\) and
\(\mathbf{SGr}(k,W,\omega)\), respectively.  In particular, combined with the
properties of \(\varphi\) from \parref{exceptions-symplectic-projection}, this
implies that
\(\psi^*\sO_{\mathbf{OGr}(k,V,q)}(1) \cong \sO_{\mathbf{SGr}(k,W,\omega)}(2)\)
and that \(\deg \psi = 2^{N-k}\) where
\[
N
\coloneqq \dim \mathbf{OGr}(k,V,q) = \dim \mathbf{SGr}(k,W,\omega) =
\frac{1}{2}k(4m-3k+1).
\]
A Grothendieck--Riemann--Roch computation as in the proof of
\parref{exceptions-symplectic-not-ample} shows that
\[
c_1(\mathcal{E}_\psi)
= (2m-k-1)2^{N-k-2} c_1(\sO_{\mathbf{OGr}(k,V,q)}(1))
\]
In the case \(k = m - 1\), the restriction of \(\mathcal{E}_\psi\) to a line
has rank \(2^{N-m+1}-1\) and degree \(m 2^{N-m-1}\), whereby the argument of
\parref{exceptions-symplectic-not-ample} shows that the Frobenius-trace kernel
of \(\mathbf{OGr}(m-1,V,q) = \mathbf{OGr}(m-1,2m+1)\) is not ample when
\(m \leq 3\). In fact, \parref{exceptions-orthogonal} below shows that the
Frobenius-trace kernel is not ample for these odd orthogonal Grassmannian for
all \(m \geq 2\). The proof relies on finer geometric properties of the
morphism \(\psi\) which, in terms of foliations as in \cite{Ekedahl:Foliations}, says
that the leaves of the foliation associated with \(\psi\) are isomorphic
to the even orthogonal Grassmannian \(\mathbf{OGr}(k,2m)\):

\begin{Proposition}\label{exceptions-orthogonal-leaves}
If \(p = 2\), there exists, for any \(1 \leq k \leq m-1\), a Cartesian
commutative diagram
\[
\begin{tikzcd}
\mathbf{OGr}(k,2m) \rar \dar["F"'] & \mathbf{SGr}(k,2m) \dar["\psi"] \\
\mathbf{OGr}(k,2m) \rar & \mathbf{OGr}(k,2m+1)
\end{tikzcd}
\]
where the horizontal arrows are closed immersions. 
\end{Proposition}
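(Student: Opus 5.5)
We will construct the diagram explicitly from linear algebra. Keep the notation of \parref{exceptions-symplectic}: \(V\) has dimension \(2m+1\), \(q\) is nonsingular, \(\operatorname{rad} b_q = \kk v_0\) with \(q(v_0) \neq 0\), and \(W = V/\kk v_0\) carries the symplectic form \(\omega\). Choose a hyperplane \(H \subset V\) complementary to \(\operatorname{rad} b_q\). Then \(q\rvert_H\) is a nondegenerate quadratic form of dimension \(2m\), because \(b_q\rvert_H\) is identified with \(\omega\) via \(\pi\rvert_H \colon H \xrightarrow{\sim} W\).

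The bottom horizontal arrow will be the closed immersion \(\mathbf{OGr}(k,H,q\rvert_H) \hookrightarrow \mathbf{OGr}(k,V,q)\) given by \(U \mapsto U\). The top arrow will be \(\mathbf{OGr}(k,H,q\rvert_H) \to \mathbf{SGr}(k,W,\omega)\), \(U \mapsto \pi(U)\). This is \(\varphi\) from \parref{exceptions-symplectic-projection} restricted to the subvariety \(\mathbf{OGr}(k,H)\), and it is a closed immersion. Indeed, \(\pi\rvert_H\) is an isomorphism, so it is the closed immersion of Grassmannians \(\Gr(k,H) \cong \Gr(k,W)\) restricted to \(q\)-isotropic subspaces, which are in particular \(\omega\)-isotropic.

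For commutativity we use \(\psi \circ \varphi = F\). Let \(\iota \colon \mathbf{OGr}(k,H) \to \mathbf{OGr}(k,V)\) be the inclusion. Then \(\psi \circ (\varphi\circ\iota) = F \circ \iota = \iota \circ F\), since Frobenius commutes with all morphisms. This is exactly commutativity of the square with left vertical arrow \(F\).

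The main work is the Cartesian property, namely that \(\psi^{-1}(\mathbf{OGr}(k,H))\) equals \(\mathbf{OGr}(k,H)\) embedded in \(\mathbf{SGr}(k,W)\), with the induced map being \(F\).

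\textbf{Describing \(\psi\).} Let \(\bar U \subset W\) be \(\omega\)-isotropic and let \(\tilde U = (\pi\rvert_H)^{-1}\bar U \subset H\). The form \(b_q\) vanishes on \(\tilde U\), so \(q\rvert_{\tilde U}\) is additive and Frobenius semilinear, \(q(\sum a_iu_i) = \sum a_i^2 q(u_i)\). Set \(\psi(\bar U) = \{u + \sqrt{q(u)}\,v_0/\sqrt{q(v_0)} : u \in \tilde U\}\) after Frobenius twisting. Concretely, \(\psi\) sends \(\bar U\) to the Frobenius twist \(\bar U^{(2)}\), lifted into \(V\) by \(u \mapsto u + c(u) v_0\), where \(c(u)^2 q(v_0) = q(u)\). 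Scheme-theoretically this is given by coordinates. Write \(M\) in the big cell with respect to a basis adapted to \(H \oplus \kk v_0\). The map \(\psi\) raises the entries of \(M\) to the \(2\)nd power, and its \(v_0\)-column is \(\sum_s x_{i,s}x_{i,s'}\) up to a constant, the linear form \(\ell_i\) with \(\ell_i^2 = q(\text{row}_i)\) modulo relations. I would first verify that \(\psi\) takes this form, using uniqueness of \(\psi\) with \(\psi\circ\varphi = F\): the formula composed with \(\varphi\) gives Frobenius on each chart.

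\textbf{Cartesian property.} The subscheme \(\mathbf{OGr}(k,H) \subset \mathbf{OGr}(k,V)\) is cut out by the vanishing of the \(v_0\)-coordinates of the subspace. Its preimage under \(\psi\) is therefore cut out by the \(\ell_i = 0\), equivalently by \(q\rvert_{\bar U} = 0\) transported to \(H\). That is exactly the locus \(\mathbf{OGr}(k,H)\) inside \(\mathbf{SGr}(k,W)\), scheme-theoretically, provided the \(\ell_i\) are reduced equations and not squares. I expect this step to be the main obstacle. One has to check, in local coordinates on \(\mathbf{SGr}\), that the pulled-back ideal is generated by the quadratic isotropy equations of \(\mathbf{OGr}(k,H)\) themselves and not by their Frobenius powers.

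Two checks close the argument. First, a degree count: \(\psi\) has degree \(2^{N-k}\), and \(\dim \mathbf{OGr}(k,2m) = N-k\), so \(F\) on \(\mathbf{OGr}(k,2m)\) has degree \(2^{N-k}\). This is consistent with flat base change, since \(\psi\) is flat between smooth varieties. Second, a fibre comparison: both sides are finite flat of the same degree over \(\mathbf{OGr}(k,H)\), and the natural map from \(\mathbf{OGr}(k,H)\) to the fibre product is a closed immersion. Equality of degrees then forces it to be an isomorphism. This reduces the obstacle to showing that the natural map is a closed immersion, which follows from the top arrow being one.
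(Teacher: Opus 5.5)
Your proposal is correct. The Cartesian step is the same as the paper's, but you reach the commutative square by a different and more economical route.

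\textbf{Commutativity.} The paper writes \(\psi\) down explicitly for \(k = 1\) as \(x \mapsto (x_0^2 : \cdots : x_{2m-1}^2 : q'(x))\). It checks by hand that this map sends \(\omega\)-isotropic lines to lines and is dual to \(\varphi\). It then uses functoriality of Hilbert schemes of linear spaces in \(Q'\) to see that \(\psi\) restricts to Frobenius on \(\mathbf{OGr}(k,2m)\). You instead take the top arrow to be \(\varphi \circ \iota\). This is in fact the same embedding the paper uses, since its identification \(H \cong \PP^{2m-1}\) is \(\pi\rvert_H\). Then \(\psi \circ \varphi \circ \iota = F \circ \iota = \iota \circ F\) is formal, needs no formula for \(\psi\), and treats all \(k\) at once.

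\textbf{Cartesian property.} Your argument runs as follows. The induced map into \(\mathbf{SGr}(k,W) \times_{\mathbf{OGr}(k,V)} \mathbf{OGr}(k,H)\) is a closed immersion, because its composite with the first projection (itself a closed immersion) is \(\varphi \circ \iota\). A closed immersion between finite locally free \(\mathbf{OGr}(k,H)\)-schemes of the same rank \(2^{N-k} = 2^{\frac{1}{2}k(4m-3k-1)}\) is an isomorphism. Flatness of \(\psi\) comes from miracle flatness. This is exactly the ``standard argument'' the paper invokes from the degree equality. What the paper's route buys in exchange is the explicit formula for \(\psi\), which underlies the geometric picture of \(\psi \colon \PP^3 \to Q\) and its quadric-surface leaves in the introduction.

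\textbf{Your paragraph describing \(\psi\).} It is not needed, and it contains a slip that manufactures the obstacle you worry about. Work in a chart where the normalized rows \(r_1,\ldots,r_k\) lie in \(H\), and normalize \(q(v_0) = 1\) as in the paper's coordinates. Then \(\psi\) sends \(r_i\) to the row of entrywise squares of \(r_i\), augmented by the \(v_0\)-entry \(q'(r_i)\). So its \(v_0\)-column is the quadric \(q'(r_i)\) itself, not a linear form \(\ell_i\). The linear lift \(u \mapsto u + \sqrt{q(u)/q(v_0)}\,v_0\) you wrote describes \(\varphi^{-1}\) on points, not \(\psi\). That this formula is \(\psi\) follows because, on \(\mathbf{OGr}(k,V)\), isotropy gives \(q'(r_i) = z_i^2\) for the \(v_0\)-entries \(z_i\).

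Carried out this way, the computation proves the Cartesian property directly. On \(\mathbf{SGr}(k,W)\) the off-diagonal conditions \(b_{q'}(r_i,r_j) = \omega(r_i,r_j)\) already vanish. Hence the pulled-back equations \(q'(r_1),\ldots,q'(r_k)\) generate exactly the ideal of \(\mathbf{OGr}(k,H)\), and no Frobenius powers appear.
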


\begin{proof}
Consider first the case \(k = 1\), so that \(\mathbf{SGr}(1,2m) = \PP^{2m-1}\)
and \(\mathbf{OGr}(1,2m+1) = Q\) is a smooth quadric in \(\PP^{2m}\). Choosing
coordinates as in \parref{exceptions-symplectic}, the morphism
\(\varphi \colon Q \to \PP^{2m-1}\) is linear projection onto the first
\(2m\) coordinates. The dual morphism \(\psi \colon \PP^{2m-1} \to Q\) is then
given by
\[
\psi(x_0:\cdots:x_{2m-1}) =
(x_0^2:\cdots : x_{2m-1}^2: x_0x_1 + \cdots + x_{2m-2}x_{2m-1}).
\]
Let \(H \subset \PP^{2m}\) be the hyperplane defined by \(x_{2m} = 0\) and
consider the hyperplane section \(Q' \coloneqq Q \cap H\). Linear projection
onto the first \(2m\) coordinates provides an isomorphism \(H \cong \PP^{2m-1}\),
whence a closed immersion \(Q' \subset \PP^{2m-1}\) identifying it as the quadric
defined by \(x_0x_1 + \cdots + x_{2m-2}x_{2m-1} = 0\).
Thus the restriction of \(\psi \colon \PP^{2m-1} \to Q\)
to \(Q' \subset \PP^{2m-1}\) factors through \(H \cap Q = Q'\) and is simply the
Frobenius morphism. The degree of both \(F \colon Q' \to Q'\) and
\(\psi \colon \PP^{2m-1} \to Q\) is \(2^{2m-2}\), so a standard argument
shows that the commutative diagram
\[
\begin{tikzcd}
Q' \rar \dar["F"'] & \PP^{2m-1} \dar["\psi"] \\
Q' \rar & Q
\end{tikzcd}
\]
is Cartesian. Recognizing \(Q' = \mathbf{OGr}(1,2m)\) now completes the proof
in the case \(k = 1\).

For \(k \geq 2\), we claim that the morphism dual to
\(\varphi \colon \mathbf{OGr}(k,2m+1) \to \mathbf{SGr}(k,2m)\)
from \parref{exceptions-symplectic-projection} is, in fact, the morphism
induced by \(\psi \colon \PP^{2m-1} \to Q\) given above. Observe first that
\(\psi\) sends a line \(\ell \subset \PP^{2m-1}\) isotropic for the symplectic
form
\(\omega = \mathrm{d}x_0 \wedge \mathrm{d}x_1 + \cdots + \mathrm{d}x_{2m-2} \wedge \mathrm{d}x_{2m-1}\)
to a line in \(Q\):  Given distinct points \(x = (x_0:\cdots:x_{2m-1})\) and
\(y = (y_0:\cdots:y_{2m-1})\) on \(\ell\) and any \(\lambda, \mu \in \kk\), the
polar identity applied to the quadratic form
\(q'(x) \coloneqq x_0x_1 + \cdots + x_{2m-2}x_{2m-1}\) defining \(Q'\) in
\(\PP^{2m-1}\) gives
\[
q'(\lambda x + \mu y) =
\lambda \mu b_{q'}(x,y) - 
\lambda^2 q'(x) - \mu^2 q'(y).
\]
Since the characteristic of \(\kk\) is \(p = 2\), the polar form \(b_{q'}\)
associated with \(q'\) is also symplectic, and is in fact none other than
\(\omega\). Since \(\ell\) is \(\omega\)-isotropic,
\(b_{q'}(x,y) = \omega(x,y) = 0\) and so
\[
\psi(\lambda x + \mu y)
= (\lambda^2x_0^2 + \mu^2 y_0^2 : \cdots : \lambda^2 x_{2m-1}^2 + y_{2m-1}^2: q'(\lambda x + \mu y))
= \lambda^2 \psi(x) + \mu^2\psi(y)
\]
meaning that \(\psi(\ell) \subset Q\) is a line. This implies that \(\psi\) sends
\(\omega\)-isotropic \((k-1)\)-planes in \(\PP^{2m-1}\) to \((k-1)\)-planes
in \(Q\), and so induces a morphism \(\mathbf{SGr}(k,2m) \to \mathbf{OGr}(k,2m+1)\);
abusing notation, this morphism will also be denoted by \(\psi\). A direct
calculation shows that \(\psi \circ \varphi\) and \(\varphi \circ \psi\) are the
endomorphisms of \(\mathbf{OGr}(k,2m+1)\) and \(\mathbf{SGr}(k,2m)\) which take
a \((k-1)\)-plane to itself, albeit squaring all the coordinates in the process:
these are the Frobenius maps, verifying that \(\psi\) is the dual of
\(\varphi\).

Finally, identify \(\mathbf{OGr}(k,2m)\) as the Fano scheme of linear spaces in
the quadric \((2m-2)\)-fold \(Q'\). Embed this into \(\mathbf{OGr}(k,2m+1)\)
and \(\mathbf{SGr}(k,2m)\) by viewing \(Q'\) as, respectively, a hyperplane
section of \(Q\) and as a quadric in \(\PP^{2m-1}\) whose polar form \(b_{q'}\)
is the symplectic form \(\omega\). The \(k = 1\) case shows that the restriction
to \(Q'\) of \(\psi \colon \PP^{2m-1} \to Q\) is the Frobenius morphism of \(Q'\),
so functoriality of Hilbert schemes implies that the restriction of
\(\psi \colon \mathbf{SGr}(k,2m) \to \mathbf{OGr}(k,2m+1)\) to the compatibly
embedded pair of \(\mathbf{OGr}(k,2m)\) is also the Frobenius, providing the
commutative diagram in the statement. That the diagram is also Cartesian now
follows from the fact that
\[
\deg(F \colon \mathbf{OGr}(k,2m) \to \mathbf{OGr}(k,2m)) =
2^{\frac{1}{2}k(4m-3k-1)} =
\deg(\psi \colon \mathbf{SGr}(k,2m) \to \mathbf{OGr}(k,2m+1)).
\qedhere
\]
\end{proof}

\begin{Proposition}\label{exceptions-orthogonal}
If \(p = 2\), then \(\mathcal{E}_Y\) is not ample for \(Y \coloneqq \mathbf{OGr}(m-1,2m+1)\).
\end{Proposition}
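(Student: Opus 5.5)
The plan follows the quadric threefold discussion in the introduction. We use the Cartesian square of \parref{exceptions-orthogonal-leaves} with \(k = m-1\), and write
\[
Z \coloneqq \mathbf{OGr}(m-1,2m) \hookrightarrow Y = \mathbf{OGr}(m-1,2m+1)
\]
for the bottom closed immersion. We also write \(\psi \colon \mathbf{SGr}(m-1,2m) \to Y\) for the dual isogeny morphism of \parref{exceptions-symplectic-dual}; it is a purely inseparable covering of \(Y\) by a smooth projective variety. By \parref{setup-ample-insep-tschirnhausen} it suffices to show that \(\mathcal{E}_\psi\) is not ample. Ampleness is preserved under restriction to closed subschemes, so it is enough to show that \(\mathcal{E}_\psi\rvert_Z\) is not ample.

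\textbf{Step 1.} Since \(\psi\) is finite flat, the formation of \(\psi_*\sO\) and of its cokernel \(\mathcal{B}_\psi = \psi_*\sO/\sO_Y\) commutes with arbitrary base change. The square in \parref{exceptions-orthogonal-leaves} is Cartesian with left vertical arrow the absolute Frobenius \(F \colon Z \to Z\). This gives \(\mathcal{B}_\psi\rvert_Z \cong \mathcal{B}_Z\), and dually \(\mathcal{E}_\psi\rvert_Z \cong \mathcal{E}_Z\). Strictly, the left arrow is the Frobenius up to the identification of the source copy of \(Z\) with the target copy. Over \(\kk\) perfect this identification is harmless, since the relative and absolute Frobenius differ by an isomorphism of the base that does not affect ampleness.

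\textbf{Step 2.} It remains to show that \(\mathcal{E}_Z\) is not ample for \(Z = \mathbf{OGr}(m-1,2m)\). Here \(m \geq 2\): for \(m=2\) the space \(Y\) is the quadric threefold, and \(\mathbf{OGr}(1,4)\) is the quadric surface \(\PP^1\times\PP^1\). In general, \(\mathbf{OGr}(m-1,2m)\) is the partial flag variety of type \(\mathrm{D}_m\) for the parabolic omitting both spinor nodes, so it has Picard rank \(2\). Concretely, each isotropic \((m-1)\)-space lies in exactly one maximal isotropic subspace from each of the two families. This gives a morphism \(Z \to \OGr_+(m,2m)\) whose fibres are \(\PP^{m-1}\), parameterizing hyperplanes in a fixed \(m\)-space, and these fibres are positive-dimensional. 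Then \parref{formal-finite-morphisms} shows that \(\mathcal{E}_Z\) is not ample. Alternatively, as noted after \parref{homogeneous-globally-generated}, a non-maximal parabolic already forces non-ampleness. For \(m=2\) one may instead cite the surface discussion in \parref{setup-positivity}.

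\textbf{Main obstacle.} The only delicate point is Step 1: justifying that the Tschirnhausen cokernel commutes with restriction along the Cartesian square, and that the left arrow really is the absolute Frobenius rather than some twisted Frobenius. The first part is flat base change for the finite locally free \(\psi_*\sO\), with the sequence \(0\to\sO\to\psi_*\sO\to\mathcal{B}_\psi\to0\) remaining exact after pullback because it is locally split. The second part is exactly what \parref{exceptions-orthogonal-leaves} provides.
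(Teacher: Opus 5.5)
Your proposal is correct and follows essentially the same route as the paper. You restrict \(\mathcal{E}_\psi\) along the Cartesian square of \parref{exceptions-orthogonal-leaves} with \(k = m-1\) to get \(\mathcal{E}_\psi\rvert_Z \cong \mathcal{E}_Z\), use the projective bundle structure of \(Z = \mathbf{OGr}(m-1,2m)\) over a spinor variety together with \parref{formal-finite-morphisms} to see that \(\mathcal{E}_Z\) is not ample, and conclude via \parref{setup-ample-insep-tschirnhausen}; your extra remarks on base change of \(\mathcal{B}_\psi\) and on absolute versus \(\kk\)-linear Frobenius just spell out what the paper compresses into one line.
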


\begin{proof}
Let \(X \coloneqq \mathbf{SGr}(m-1,2m)\) and let \(\psi \colon X \to Y\) be
the morphism dual to that from \parref{exceptions-symplectic-not-ample}. By
\parref{exceptions-orthogonal-leaves}, there exists embeddings \(Z \subset X\)
and \(Z \subset Y\) of \(Z \coloneqq \mathbf{OGr}(m-1,2m)\) such that \(\psi\)
restricts to the Frobenius morphism \(F \colon Z \to Z\). Since \(\psi\) is
finite locally free, this implies that
\[
\psi_*\sO_X\rvert_Z \cong F_*\sO_Z
\]
yielding an isomorphism \(\mathcal{E}_\psi\rvert_Z \cong \mathcal{E}_Z\).
However, \(Z\) is a homogeneous space of Picard rank \(2\): In a nonsingular
quadratic space \((V',q')\) of dimension \(2m\), an isotropic subspace
\(U \subset V'\) of dimension \(m-1\) uniquely determines two \(m\)-dimensional
isotropic subspaces \(U_\pm \subset U^\perp \subset V'\) with the property that
\(U = U_+ \cap U_-\). The maps \([U] \mapsto [U_\pm]\) then exhibits \(Z =
\mathbf{OGr}(m-1,2m)\) as a projective bundle over the two spinor varieties
\(\mathbf{OGr}_\pm(m,2m)\) from \parref{homogeneous-spinor}. This implies that
\(\mathcal{E}_Z\) cannot be ample, for instance by
\parref{formal-finite-morphisms}, whence \(\mathcal{E}_\psi\) is not ample.
Therefore \(\mathcal{E}_Y\) is not ample by
\parref{setup-ample-insep-tschirnhausen}.
\end{proof}


\subsectiondash{Type \(\mathrm{G}_2\)}\label{exceptions-G2}
Consider now the case \(X\) is the \(\mathrm{G}_2\)-Grassmannian as in
\parref{homogeneous-G}. Since \(\omega_X^{-1} \cong \sO_X(3)\),
\parref{setup-numerical-obstruction} implies that \(\mathcal{E}_X\) is not
ample when \(p = 2\) since \(X\) contains lines for \(\sO_X(1)\): see
\cite[Lemma 3]{Strickland:Lines}, whose proof also works in characteristic
\(2\), though see also \cite{CC:Lines} and \cite[\S4.2]{LM:Lines}. The remaining case is more
interesting and pertains to the exotic isogeny
\(\phi \colon \mathbf{G} \to \mathbf{G}\) of algebraic groups of type
\(\mathrm{G}_2\) over fields of characteristic \(p = 3\).
Existence proofs using general methods from the theory of reductive groups
may be found, for example, in \cite[Chapter 10]{Steinberg} and \cite[Expos\'e XXI]{SGA3III}.
Chevalley's original construction is more explicit and better suited for our
purposes; we sketch it below and refer to
\cite[no.21]{Chevalley:Seminar}\footnote{See the original notes from
Chevalley's seminar at
{\scriptsize\url{https://www.numdam.org/item/SCC_1956-1958__2_/}}.} for
the details:

Let \(\mathbf{G}\) be a semisimple algebraic group of type \(\mathrm{G}_2\).
The basic observation is that, when \(\kk\) has characteristic \(p = 3\), the
adjoint representation \(\mathfrak{g}\) fits into a nonsplit short exact sequence
\[
0 \to V \to \mathfrak{g} \to V' \to 0
\]
where \(V\) is the \(7\)-dimensional representation of \(\mathbf{G}\) whose
highest weight is the first fundamental weight
\(\varpi_1 = \alpha_4 = 2\alpha_1 + \alpha_2\): this may be seen directly by
inspecting the multiplication table of the Lie algebra \(\mathfrak{g}\)---see
\cite[\S22]{FH:Rep} for instance---and observing that the root spaces of
weights \(\pm \alpha_1\), \(\pm \alpha_3\), and \(\pm \alpha_4\) generate a
\(p\)-Lie subalgebra due to vanishing of certain structure constants.
Chevalley shows this more conceptually by observing that the Killing form on
\(\mathfrak{g}\) degenerates when \(p = 3\) and identifies \(V\) as the radical
of the form. A useful consequence of this is that the induced bilinear form on
\(V'\) is nondegenerate and \(\mathbf{G}\)-equivariant, which may be used to
show that \(V'\) is also a simple module for \(\mathbf{G}\). This implies that
the morphism \(\mathbf{G} \to \mathbf{GL}(V')\) is an isogeny onto its image
\(\mathbf{G}'\). General results on semisimple algebraic groups now imply that
\(\mathbf{G}'\) is isomorphic to \(\mathbf{G}\), yielding the isogeny
\(\phi \colon \mathbf{G} \to \mathbf{G}' \cong \mathbf{G}\). Properties of the
induced morphisms on associated homogeneous spaces may be obtained from a
careful analysis of the isogeny \(\phi\); instead, we give a direct geometric
argument:

\begin{Lemma}\label{exceptions-G2-covers}
If \(p = 3\), there exists a dual pair of finite purely
inseparable morphisms of height \(1\)
\[
\varphi \colon X \to Q
\;\;\text{and}\;\;
\psi \colon Q \to X
\]
with
\(\deg\varphi = 9\) and \(\deg\psi = 27\), and such that
\(\varphi^*\sO_Q(1) \cong \sO_X(1)\) and \(\psi^*\sO_X(1) \cong \sO_Q(3)\).
\end{Lemma}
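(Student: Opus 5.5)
The plan is to build \(\varphi\) from Chevalley's sequence \(0 \to V \to \mathfrak{g} \to V' \to 0\) by linear projection from \(\PP V\), and then to obtain \(\psi\) formally as the dual of \(\varphi\).

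\emph{Construction of \(\varphi\).} In characteristic \(p = 3\), realize \(X\) inside \(\PP\mathfrak{g}\), which sits in the Pl\"ucker space \(\PP\wedge^2 V_7\) as in \parref{homogeneous-G}. There \(X\) is the closed \(\mathbf{G}\)-orbit of the highest weight line, which is the long root space of weight \(\alpha_6\); its Pl\"ucker hyperplane class is \(\sO_X(1)\). Linear projection along the \(\mathbf{G}\)-stable subspace \(\PP V \subset \PP\mathfrak{g}\) gives a rational map \(X \dashrightarrow \PP V'\). I will show first that \(X \cap \PP V = \emptyset\). Since \(V\) is \(\mathbf{G}\)-stable and \(X\) is a single orbit, meeting \(\PP V\) would force \(X \subseteq \PP V\). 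But \(X\) spans \(\PP\mathfrak{g}\), because \(\mathfrak{g}\) is generated as a representation by its highest weight line; alternatively, the weight \(\alpha_6\) does not occur in \(V\), whose weights are the short roots and \(0\). Hence the projection is a morphism \(\varphi \colon X \to \PP V'\) with \(\varphi^*\sO(1) \cong \sO_X(1)\). It is finite, since it pulls back an ample bundle to an ample one, and it is equivariant along the isogeny \(\mathbf{G} \to \mathbf{G}' \cong \mathbf{G}\). Its image is therefore a closed orbit of \(\mathbf{G}'\) of dimension \(5\) in the \(7\)-dimensional representation \(\PP V'\). I will identify this orbit with the quadric \(Q\) of the other maximal parabolic. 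The argument is that the highest weight of \(V'\) is \(3\varpi_1\) twisted by the exotic isogeny, which exchanges long and short roots; the closed orbit in \(\PP V'\) is then the minimal orbit of the \(7\)-dimensional representation of \(\mathbf{G}'\). This gives \(\varphi \colon X \to Q\) with \(\varphi^*\sO_Q(1) \cong \sO_X(1)\).

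\emph{Degree and inseparability.} The projection formula gives \(\deg\varphi \cdot \deg Q = \deg_{\sO_X(1)} X\). Using \(\deg Q = 2\) and the classical degree \(18\) of the \(\mathrm{G}_2\)-Grassmannian in its Pl\"ucker embedding, this yields \(\deg\varphi = 9\). For pure inseparability, I will note that \(\varphi\) is an equivariant map between homogeneous spaces, \(\mathbf{G}/\mathbf{P} \to \mathbf{G}/\phi^{-1}(\mathbf{P}')\), where \(\phi^{-1}(\mathbf{P}')\) is a parabolic whose reduced subgroup is \(\mathbf{P}\). It is therefore bijective on \(\kk\)-points, and purely inseparable by \citeSP{01S4}.

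\emph{Height \(1\) and the dual morphism.} I expect the main obstacle to be showing that \(\varphi\) has height \(1\). My first attempt will be group-theoretic: the kernel of the exotic isogeny is contained in the Frobenius kernel \(\mathbf{G}_1\), since \(\phi\) is a special isogeny whose square is the Frobenius. Consequently \(\phi^{-1}(\mathbf{P}') \subseteq \mathbf{P}\mathbf{G}_1\), and the Frobenius \(\mathbf{G}/\mathbf{P} \to \mathbf{G}/\mathbf{P}\mathbf{G}_1 \cong X^{(3)}\) factors through \(\varphi\). If this becomes delicate, the fallback is a local check in the coordinates of \parref{homogeneous-G}: I would verify that the linear forms of \(V'\) pulled back along \(\varphi\), divided by one of them, generate a subring of the function field containing all cubes of the root coordinates \(x_{\alpha_i}\). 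Granting height \(1\), there is \(\psi \colon Q \to X\) with \(\psi \circ \varphi = F_X\). Then \(\varphi \circ \psi \circ \varphi = \varphi \circ F_X = F_Q \circ \varphi\), and since \(\varphi\) is an epimorphism this gives \(\varphi \circ \psi = F_Q\). Hence \(\deg\psi = 3^5/9 = 27\). Finally, \(\varphi^*\psi^*\sO_X(1) = F_X^*\sO_X(1) = \sO_X(3) = \varphi^*\sO_Q(3)\). Since \(\varphi^* \colon \Pic Q \to \Pic X\) is an isomorphism, sending the generator \(\sO_Q(1)\) to the generator \(\sO_X(1)\), this gives \(\psi^*\sO_X(1) \cong \sO_Q(3)\).
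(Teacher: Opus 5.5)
Your construction of \(\varphi\) (projection from \(\PP V\), with disjointness because \(X\) is a single orbit spanning \(\PP\mathfrak{g}\)) agrees with the paper. So does your deduction of \(\varphi\circ\psi = F_Q\), \(\deg\psi = 27\) and \(\psi^*\sO_X(1)\cong\sO_Q(3)\) from \(\psi\circ\varphi = F_X\). The middle step takes a different route. The paper obtains pure inseparability, height \(1\) and \(\deg\varphi = 9\) in one stroke by writing \(\varphi^\#\) on the Schubert cell: \(y_4\mapsto -x_{\alpha_2}\), \(y_5\mapsto x_{\alpha_5}\), \(y_1+y_3y_4-y_4^2y_5\mapsto x_{\alpha_3}^3\) and \(y_2-y_3y_5-y_4y_5^2\mapsto x_{\alpha_4}^3\). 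This is exactly your fallback. Your primary route takes the degree from \(18 = 2\deg\varphi\), bijectivity from stabilizers, and height \(1\) from \(\ker\phi\subseteq\mathbf{G}_1\). That is the ``careful analysis of the isogeny'' the paper mentions and deliberately sidesteps. It explains conceptually why the height is \(1\), but it imports three inputs: \(\deg X = 18\), the structure of the special isogeny, and the scheme structure of the closed orbit in \(\PP V'\).

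That last input is where your height-\(1\) step has a gap. The inclusion \(\ker\phi\subseteq\mathbf{G}_1\) alone does not give \(\phi^{-1}(\mathbf{P}')\subseteq\mathbf{P}\mathbf{G}_1\). You need \(\phi^{-1}(\mathbf{P}') = \mathbf{P}\cdot\ker\phi\), i.e.\ \(\phi(\mathbf{P}) = \mathbf{P}'\) as group schemes. This would fail if the scheme-theoretic stabilizer \(\mathbf{P}'\) in \(\mathbf{G}'\) of a point of \(Q\) were non-reduced.

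It is in fact reduced here, and your own degree computation proves it. The subgroup \(\ker\phi\) is the height-\(1\) subgroup with Lie algebra \(V\). The intersection \(V\cap\mathfrak{p}\) is spanned by the root spaces of \(\pm\alpha_1,\alpha_3,\alpha_4\) together with the weight-\(0\) line of \(V\), so it is \(5\)-dimensional. Hence \(\mathbf{G}/\mathbf{P}\to\mathbf{G}/\mathbf{P}\ker\phi\) has degree \(3^{7-5} = 9 = \deg\varphi\). So the induced map \(\mathbf{G}/\mathbf{P}\ker\phi\to Q\) is finite and birational onto the normal variety \(Q\), hence an isomorphism.

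There is also a minor slip: \(V'\) has highest weight \(\alpha_6 = \varpi_2\), not \(3\varpi_1\). What you actually use, correctly, is that \(V'\) is the pullback along \(\phi\) of the \(7\)-dimensional representation of \(\mathbf{G}'\).
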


\begin{proof}
To construct the morphism \(\varphi \colon X \to Q\), observe that \(X\) naturally
embeds into \(\PP\mathfrak{g}\) and may be identified as the closed orbit of
\(\mathbf{G}\) acting on a vector of highest weight \(\alpha_6 = 3\alpha_1 + 2\alpha_2\).
That \(\mathfrak{g}\) contains the subrepresentation \(V\) means that
\(\PP\mathfrak{g}\) contains another \(\mathbf{G}\)-orbit whose closure is
\(\PP V\). Since \(X\) is not linearly degenerate in \(\PP\mathfrak{g}\), it
must be disjoint from the linear space \(\PP V\). Linear projection with
centre \(\PP V\) then induces a finite morphism \(X \to \PP V'\). This map is
\(\mathbf{G}\)-equivariant and so it factors through the closed
\(\mathbf{G}\)-orbit \(Q \subset \PP V'\), providing the morphism
\(\varphi \colon X \to Q\). Since \(\varphi\) is obtained from linear
projection, it satisfies \(\varphi^*\sO_Q(1) \cong \sO_X(1)\).

For the remaining properties, explicitly compute \(\varphi\) in terms of the
coordinates of the Schubert cell as in \parref{homogeneous-G}. For each pair
\(1 \leq i < j \leq 7\), let \(\wp_{i,j} \coloneqq \det M_{i,j}\) be the
Pl\"ucker coordinate associated with the \(i\)-th and \(j\)-th columns of the
coordinate matrix \(M\). By construction, \(\varphi\) projects onto the
Pl\"ucker coordinates of weights \(\pm \alpha_2\), \(\pm \alpha_5\),
\(\pm \alpha_6\), and one coordinate of weight \(0\); note that, as in
\parref{homogeneous-local-computations}, the weight of the Pl\"ucker
coordinates are related to that of the local affine coordinates
\(x_{\alpha_i}\) by a shift of \(-\alpha_6 = -3\alpha_1 - 2\alpha_2\).
A computation shows that
\[
\wp_{1,2}\wp_{6,7} + \wp_{1,3}\wp_{5,7} + \wp_{2,5}\wp_{3,6} + (\wp_{1,7} + \wp_{3,5})^2
= -3(x_{\alpha_2} x_{\alpha_5} x_{\alpha_6} + x_{\alpha_3} x_{\alpha_4} x_{\alpha_6}) = 0
\]
and so the map
\(\varphi \colon X \to Q \coloneqq \{(y_0:\cdots:y_6) \in \PP^6 : y_0y_6 + y_1y_5 + y_2y_4 +  y_3^2 = 0\}\)
is given by
\[
\varphi(x) =
(\wp_{1,2}(x):\wp_{1,3}(x): \wp_{2,5}(x): \wp_{1,7}(x) + \wp_{3,5}(x) : \wp_{3,6}(x) : \wp_{5,7}(x) : \wp_{6,7}(x)).
\]
In particular, \(\varphi\) restricts to a map of affine open subschemes
\(X \setminus \mathrm{V}(\wp_{6,7}) \to Q \setminus \mathrm{V}(y_6)\) corresponding
to the \(\kk\)-algebra map
\(\varphi^\# \colon \kk[y_1,\ldots,y_5] \to \kk[x_{\alpha_2},\ldots,x_{\alpha_6}]\) given by
\(y_4 \mapsto -x_{\alpha_2}\), \(y_5 \mapsto x_{\alpha_5}\),
\begin{gather*}
y_1 \mapsto x_{\alpha_3}^3 - x_{\alpha_2}x_{\alpha_3}x_{\alpha_4} + x_{\alpha_2}x_{\alpha_6},
\quad\quad
y_3 \mapsto x_{\alpha_6} - x_{\alpha_2}x_{\alpha_5} - x_{\alpha_3}x_{\alpha_4}, \\
y_2 \mapsto x_{\alpha_4}^3 - x_{\alpha_2}x_{\alpha_3}x_{\alpha_4} + x_{\alpha_2}x_{\alpha_5}^2 + x_{\alpha_5}x_{\alpha_6}.
\end{gather*}
Since \(y_1 + y_3y_4 - y_4^2y_5 \mapsto x_{\alpha_3}^3\) and
\(y_2 - y_3y_5 - y_4y_5^2 \mapsto x_{\alpha_4}^3\), it follows that \(\varphi\) is
purely inseparable of height \(1\) and degree \(9\). The properties
of \(\psi\) are then deduced from the relation \(\psi \circ \varphi = F\).
\end{proof}

\begin{Proposition}\label{exceptions-G2-not-ample}
If \(p = 2\) or \(p = 3\), then \(\mathcal{E}_X\) is not ample for
\(X\) the \(\mathrm{G}_2\)-Grassmannian.
\end{Proposition}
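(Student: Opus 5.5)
For \(p = 2\), I would use the numerical obstruction \parref{setup-numerical-obstruction}, as already indicated in \parref{exceptions-G2}. The \(\mathrm{G}_2\)-Grassmannian \(X\) has dimension \(5 \geq 3\). Since \(\omega_X^{-1} \cong \sO_X(3)\) by \parref{homogeneous-G}, a line for \(\sO_X(1)\) is a smooth rational curve on which \(c_1(\mathcal{T}_X)\) has degree \(k = 3\). Such lines exist by \cite[Lemma 3]{Strickland:Lines}, whose argument also works in characteristic \(2\). The lemma's case \(p = 2\), \(k \leq 3\) then shows \(\mathcal{E}_X\) is not ample.

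For \(p = 3\), the numerics of \(\mathcal{E}_X\) itself do not suffice, since \(k = 3\) is allowed there. So I would pass to a purely inseparable covering and use \parref{setup-ample-insep-tschirnhausen}. Take \(\psi \colon Q \to X\) from \parref{exceptions-G2-covers}, which is finite purely inseparable of degree \(27\) with \(\psi^*\sO_X(1) \cong \sO_Q(3)\). Here \(Q\) is a smooth quadric fivefold, hence smooth projective. Because \(\psi\) is finite between smooth varieties, it is flat, so \(\psi_*\sO_Q\) is locally free of rank \(27\) and \(\mathcal{E}_\psi\) has rank \(26\).

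Next I would compute \(c_1(\mathcal{E}_\psi)\) by Grothendieck--Riemann--Roch, exactly as in the proof of \parref{exceptions-symplectic-not-ample}:
\[
c_1(\mathcal{E}_\psi) = -c_1(\psi_*\sO_Q) = \tfrac{27}{2}\, c_1(\mathcal{T}_X) - \tfrac{1}{2}\, \psi_* c_1(\mathcal{T}_Q).
\]
Write \(H \coloneqq c_1(\sO_X(1))\) and \(h \coloneqq c_1(\sO_Q(1))\), so that \(c_1(\mathcal{T}_X) = 3H\) and \(c_1(\mathcal{T}_Q) = 5h\). Rationally \(h = \tfrac{1}{3}\psi^*H\), so the projection formula gives \(\psi_* h = \tfrac{1}{3} \cdot 27\, H = 9H\). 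Hence
\[
c_1(\mathcal{E}_\psi) = \tfrac{81}{2}H - \tfrac{45}{2}H = 18H.
\]
Restricting to a line \(\ell \subset X\), \(\mathcal{E}_\psi\rvert_\ell\) is a rank \(26\) bundle on \(\PP^1\) of degree \(18 < 26\). It splits as a sum of line bundles, so some summand has degree \(\leq 0\) and it is not ample. Therefore \(\mathcal{E}_\psi\) is not ample, and \(\mathcal{E}_X\) is not ample by \parref{setup-ample-insep-tschirnhausen}.

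The main point to check is the first Chern class computation. One must confirm that the GRR identity \(c_1(\psi_*\sO_Q) = \tfrac{1}{2}\bigl(\psi_*c_1(\mathcal{T}_Q) - \deg\psi \cdot c_1(\mathcal{T}_X)\bigr)\) is legitimate in characteristic \(3\) for this finite flat morphism. It holds in rational Chow groups, as already used in \parref{exceptions-symplectic-not-ample}. One must also check the identification \(\psi_*h = 9H\), which uses that \(\Pic X\) is generated by \(H\). Everything else reduces to the earlier results cited above.
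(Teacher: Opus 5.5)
Your proposal is correct and follows the paper's argument. For \(p = 2\) you apply the numerical obstruction to lines of anticanonical degree \(3\); for \(p = 3\) you use the degree \(27\) cover \(\psi \colon Q \to X\), obtain \(c_1(\mathcal{E}_\psi) = 18\,c_1(\sO_X(1))\) from Grothendieck--Riemann--Roch and the projection formula, restrict to a line where the rank \(26\) exceeds the degree \(18\), and conclude via \parref{setup-ample-insep-tschirnhausen}, exactly as the paper does.
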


\begin{proof}
It remains to handle the characteristic \(3\) case. Let \(\psi \colon Y \to X\)
be the morphism from \parref{exceptions-G2-covers}. Then \(\mathcal{E}_\psi\) is a vector
bundle of rank \(\deg\psi - 1 = 26\) and first Chern class
\[
2c_1(\mathcal{E}_\psi)
= -2c_1(\psi_*\sO_Y)
= 27 c_1(\mathcal{T}_X) - \psi_*c_1(\mathcal{T}_Y)
= 81 c_1(\sO_X(1)) - 5\psi_*c_1(\sO_Y(1)).
\]
Since \(\psi^*\sO_Y(1) \cong \sO_X(3)\), a projection formula computation
shows that \(\psi_*c_1(\sO_Y(1)) = 9c_1(\sO_X(1))\), from which it follows that
\(c_1(\mathcal{E}_\psi) = 18c_1(\sO_X(1))\). As before, \(X\) contains lines
in its Pl\"ucker embedding, the restriction of \(\mathcal{E}_\psi\) onto
any such line is a vector bundle of rank \(r = 26\) and degree \(d = 18\),
and so cannot be ample. Therefore \(\mathcal{E}_\psi\) is itself
not ample, whence \(\mathcal{E}_X\) is not ample by \parref{setup-ample-insep-tschirnhausen}.
\end{proof}

It would be interesting to describe \(\varphi \colon X \to Q\) appearing in
\parref{exceptions-G2-covers} in terms of the trilinear form \(\gamma\) used to
define \(X\) in \parref{homogeneous-G}. Towards this, it may be useful to view
\(\mathbf{G}\) as the automorphism group of the split octonionic Cayley algebra
\(C = \kk \oplus V\) over \(\kk\), where the \(7\)-dimensional representation
\(V\) may be identified as the subspace of purely imaginary octonions. When \(p
= 3\), the commutator with respect to the octonionic product satisfies the
Jacobi identity---see \cite[p.402]{BEMNM} and also \cite[Remark
3.4.1]{Manivel:BRIDGES}---making \(C\) into a Lie algebra.  Identifying
\(\mathfrak{g}\) as the Lie algebra of derivations on \(C\), the adjoint
representation of \(C\) on itself then provides an embedding of Lie algebras
\(V \subset C \to \mathfrak{g}\).

\bibliographystyle{amsalpha}
\bibliography{main}
    \end{document}